\newcommand\bSI[1]{{\small[\SI{}{#1}]}}
\newlength\unitwdth
\newlength\numwdth
\newlength\tdima
\newcommand\SIdescr[2]{%
    \setlength\tdima{\linewidth}%
    \addtolength\tdima{\@totalleftmargin}%
    \addtolength\tdima{-\dimen\@curtab}%
    \addtolength\tdima{-\unitwdth}%
    \addtolength\tdima{-\numwdth}%
    \parbox[t]{\tdima}{%
        #1
        \leaders\hbox{$\m@th\mkern \@dotsep mu\hbox{\tiny.}\mkern \@dotsep mu$}%
        \hfill
        \ifhmode\strut\fi
        \makebox[0pt][l]{%
            \makebox[\unitwdth][l]{}%
            \makebox[\numwdth][r]{#2}}}}
\newcommand{\mathleft}{\@fleqntrue\@mathmargin0pt}
\newcommand{\mathcenter}{\@fleqnfalse}
\newcommand{\N}{\mathbb{N}}
\newcommand{\R}{\mathbb{R}}
\newcommand{\loc}{\mathrm{loc}}
\newcommand{\bmat}[2]{\left[ \begin{array}{#1} #2 \end{array} \right]}
\newcommand{\dx}{\, \mathrm{d}x}
\newcommand{\ds}{\, \mathrm{d}s}
\newcommand{\dt}{\, \mathrm{d}t}
\newcommand{\dtau}{\, \mathrm{d}\tau}
 \DeclareMathOperator*{\esssup}{ess\,sup}
\newcommand{\cross}{\times}
\newcommand{\del}{\partial}
\newcommand{\grad}{\nabla}
\newcommand{\eps}{\varepsilon}
\newcommand{\Realization}{\mathrm{R}}
\newcommand{\sconc}{\odot}
\newcommand\restr[2]{{
  \left.\kern-\nulldelimiterspace 
  #1 
  \vphantom{\big|} 
  \right|_{#2} 
  }}
\newtheorem{theorem}{Theorem}[section]
\newtheorem*{theorem*}{Theorem}
\newtheorem{remark}[theorem]{Remark}
\newtheorem{definition}[theorem]{Definition}
\newtheorem{proposition}[theorem]{Proposition}
\newtheorem*{remark*}{Remark}
\newtheorem*{proposition*}{Proposition}
\numberwithin{equation}{section}
\title{Efficient Approximation of Solutions of Parametric Linear Transport Equations by ReLU DNNs}
\author{Fabian Laakmann\thanks{Mathematical Institute, University of Oxford, Andrew Wiles Building, Woodstock Road,  Oxford, OX2 6GG, UK, e-mail: \texttt{Fabian.Laakmann@maths.ox.ac.uk}} \and Philipp Petersen\thanks{Institut f\"ur Mathematik, Universit\"at Wien, Oskar-Morgenstern-Platz 1, 1090 Wien, Austria, e-mail: \texttt{Philipp.Petersen@univie.ac.at}}}
\date{ }
\begin{document}
\maketitle

\begin{abstract}
   We demonstrate that deep neural networks with the ReLU activation function can efficiently approximate the solutions of various types of parametric linear transport equations. For non-smooth initial conditions, the solutions of these PDEs are high-dimensional and non-smooth. Therefore, approximation of these functions suffers from a curse of dimension. We demonstrate that through their inherent compositionality deep neural networks can resolve the characteristic flow underlying the transport equations and thereby allow approximation rates independent of the parameter dimension.
\end{abstract}

\textbf{Keywords:} deep neural networks, parametric PDEs, approximation rates, curse of dimension, transport equations.

\textbf{Mathematical Subject Classification:} 
 35A35
, 35Q49
, 41A25
, 41A46
, 68T05
, 65N30

\section{Introduction}

Linear parametric transport equations play an essential role in engineering, modelling, and mathematical physics where they describe physical phenomena of heat and mass transfer. 
A typical example is the transport of pollution in air or water depending on a set of parameters such as the direction and intensity of the flow of the fluid. 

In this work, we study to what extent the solutions of various types of parametric linear transport equations can be efficiently represented by deep neural networks. Concretely, we study variations of the following problem: Let $n,D,k \in \N$, and $T>0$. Let $V \in C^k([0,T]\cross \R^n \cross [0,1]^D; \R^n)$, $f \in C^k([0,T]\cross \R^n \cross [0,1]^D)$ and let $u_0 \in C^1(\R^n)$. We want to find $u \in C^1([0,T] \times \R^n \times [0,1]^D)$ such that 
\begin{align}\label{eq:Motivation}
\begin{cases}
\partial_t u(t,x,\eta) + V(t,x,\eta)\cdot \nabla_x u(t,x,\eta) = f(t,x,\eta),\\
u(0,x,\eta)=u_0(x). 
\end{cases}
\end{align}
The PDE of \eqref{eq:Motivation} has been studied extensively, see, e.g., \cite{Ambrosio2008, Ambrosio2005, Golse2000, GolseFrench} and we will recall the fundamentals in Section \ref{sec:linTransEq}. The set-up that we have in mind is one where the dimension of the parameter space $D$ is very high, $V$ is smooth, and $u_0$ is not very regular. Hence, direct approximation of $u$ amounts to \emph{approximating a high-dimensional function of low regularity}. In this formulation, the task is extremely challenging for classical methods.

While the global approximation problem is almost intractable, the method of characteristics shows that, even though $u$ is not smooth, its singularities revolve along smooth curves, called \emph{characteristic curves}. In this framework, the function $u$ can typically be written in a compositional form of two functions where one is  high-dimensional and smooth and the other is low-dimensional and (potentially) rough. 

Based on this split and the inherent compositionality of neural networks, we will demonstrate that every $u$ satisfying \eqref{eq:Motivation} can be approximated efficiently by neural networks with ReLU activation function. The approximation rate is significantly better than that of any classical regularity-based approximation of $u$. In particular, in the prescribed set-up, we will observe an \emph{approximation rate independent of the dimension $D$ of the parameter space}. 

The material presented below was first established in a mini-project of the first author at the University of Oxford, \cite{FabianTT}.

\subsection{Applications and relevance}

We believe that the efficient approximation of solutions of parametric transport equations with dimension-independent approximation rates is an interesting and relevant problem in the following domains:

\begin{itemize}
    \item \emph{Approximation theory:}
   The class of functions that are solutions of high-dimensional parametric linear transport equations is a relevant but non-standard function class. This class while high-dimensional has a non-trivial but rigid structure imposed upon via the underlying PDE. It is, therefore, interesting to establish to what extent deep neural networks can leverage on this structure. In this context, similar approximation schemes based on structured systems were developed for special types of (parametric) transport equations. For example, in \cite{dahmen2014efficient, dahmen2018adaptive} and \cite{obermeier2019approximation} systems were introduced that approximate the solutions of linear transport equations with linear or $C^2$-regular characteristic curves. These constructions are closely tied to the type of characteristic curves. We demonstrate here that, in contrast to these systems, approximation by deep neural networks is automatically adapted to the underlying regularity of the problems and benefits from higher regularity of the characteristic curves.

    \item \emph{Estimation:} In machine learning and especially in deep learning, deep neural networks are trained with gradient-based optimisation algorithms to minimise empirical energies based on random samples, \cite{Goodfellow-et-al-2016, LeCun2015DeepLearning}. These techniques have proven to be extremely successful in a variety of applications. 
    
    Consider a parametric transport problem of the form \eqref{eq:Motivation} where $u_0$, $f$, and $V$ are unknown, but samples $(u(t_i,x_i, \eta_i))_{i=1}^N$ are available through measurements. Such a scenario could be encountered in the transport of pollution in fluids under unknown circumstances, but with a control on parameters of the experiment. In this formulation, the transport problem is a standard supervised learning problem. Moreover, classical methods to solve linear transport equations cannot be used at all without knowledge of $f$ and $V$ in \eqref{eq:Motivation}.
    
    Certainly, this estimation problem can only be successfully solved with deep learning techniques, if the correct solution to the problem can be represented or closely approximated by a deep neural network. In this context, our results show the feasibility of this approach.      
    
    \item \emph{Numerical analysis:} 
    Deep neural networks have been employed as Ansatz spaces for PDEs in multiple settings before \cite{sirignano2018dgm, weinan2018deep, beck2018solving}. Of course, the efficiency of these methods depends on the capacity of the Ansatz space to capture the true solution. 
    
    Our proposed approximation of solutions of \eqref{eq:Motivation} by deep neural networks can be thought of as a higher-order method that automatically adapts to the regularity of the underlying characteristic curves. 
    
    An established method to solve \eqref{eq:Motivation} is by using (Petrov-) Galerkin-type discretisations, \cite{dahmen2020adaptive, egger2012mixed}. These methods are, however, typically not adaptive to singularities lying on lower-dimensional manifolds. In the model of \eqref{eq:Motivation}, such structured singularities evolve precisely along the characteristic curves. While some advances to handle structured singularities have been made, e.g. \cite{dahmen2014efficient}, the adaptivity to the manifold only uses low-order information on the smoothness of the manifold. Our results demonstrate that an approximation via deep neural networks adapts to any regularity of the characteristic curves in the sense that the approximation quality improves for smoother characteristic curves.
    
    Standard discretisation and time-stepping techniques such as finite differences and Euler, Crank-Nicolson, or higher-order variants converge with rates depending on the global regularity of the solution of the PDE which, in our set-up, is assumed to be quite low.

    \item \emph{Reduced-Order Models:} In applications where the solution of \eqref{eq:Motivation} is requested for many different parameter values, it is desirable to employ model reduction techniques \cite{hesthaven2016certified, quarteroni2014reduced}. It is well known that parametric linear transport problems are highly challenging for linear reduced-order models because the dimension of linear approximation spaces to capture the non-linear evolution of singularities can be excessive, \cite[Section 5]{ohlberger2015reduced}, \cite[Section 6.3]{dahmen2014double}. Indeed, linear reduced-order models typically succeed only if additional assumptions are made on the parameter dependence, such as a certain separability of the parametric dependence and the spatial dependence of $V$, \cite{grella2013sparse}.

    The neural network based approximation presented in this work requires almost no structural assumption on the parametric dependence. Indeed, a smooth dependence of $V$ and $f$ on the parameters is sufficient. 
    In this context, a superiority of deep neural network-based approaches over linear reduced-order models to solve parametric transport equations was also empirically observed in \cite{fresca2020comprehensive}.
\end{itemize}

\subsection{Related work}

This work describes the capacity of neural networks to approximate high-dimensional functions with asymptotic rates independent of the underlying dimension. Of course, approximation theory of deep neural networks is a well established field. Therefore, in order to place our contribution in the context of existing literature, we provide an overview of classical and more modern developments in the field below.

\subsubsection{Classical approximation}
The first and probably most prominent result describing the approximation capabilities of neural networks is the universal approximation theorem, \cite{cybenko1989approximation, hornik1989multilayer}. This theorem states that, on a compact, domain, every continuous function can be arbitrarily well approximated by a neural network in the uniform norm. These statements, however, do not provide an estimate on the required sizes of the approximating neural networks. The typical approach to obtain a quantitative estimate on the order of approximation is to re-approximate classical methods. For example, in \cite{mhaskar1993approximation} and \cite{MhaskarAnalytic1996}, it was shown that neural networks yield the same approximation rates as splines when approximating smooth functions.

Recently, approximation by neural networks with the ReLU activation function has received the most attention since this activation function is arguably the most widely-used in applications. It was demonstrated that deep neural networks with the ReLU activation function achieve the same approximation rates as linear and higher-order finite elements \cite{LinearFEMReLU, opschoor2019deep}, wavelets \cite{shaham2018provable}, and local approximation by Taylor polynomials, \cite{yarotsky2017error}.

These classical approximation results show that deep neural networks are very versatile by combining the approximation capabilities of a wide variety of classical tools. However, they do not identify a particular situation where deep neural networks outperform the best classical method. This picture changes drastically, when one considers high-dimensional approximation.

\subsubsection{High-dimensional approximation}

High-dimensional approximation generally suffers from a curse of dimension, meaning that approximation rates deteriorate exponentially with increasing dimension, \cite{bellman1952theory, novak2009approximation}. Nonetheless, if an additional structure is assumed, then the curse of dimension can be overcome. It turns out that deep neural networks can take advantage of a wide variety of complex additional structural properties. For example, it was shown in \cite{barron1993universal}, that deep neural networks can approximate functions with bounded first Fourier moments without a curse of dimension. Other regularity-based assumptions were used in \cite{montanelli2019deep} and \cite{suzuki2018adaptivity}. Further classes of functions with structural assumptions such as functions based on directed acyclic graphs \cite{poggio2017and} or functions admitting strong invariances \cite[Section 5]{petersen2018optimal} allow similar results. Finally, if the approximation error is evaluated on a low-dimensional manifold only, then \cite{shaham2018provable, schmidt2019deep, chen2019efficient, boelcskei2019optimal} show approximation rates independent of the ambient dimension.    

\subsubsection{Approximation of solutions of PDEs}

The extraordinary efficiency in the approximation of certain high-dimensional functions has been especially interesting in connection with the numerical solution of PDEs \cite{sirignano2018dgm, weinan2018deep, beck2018solving}. For example, for high-dimensional Black Scholes-, Kolmogorov-, or heat equations deep neural networks can efficiently approximate the solutions thereof in a regime where any mesh-based method would fail, \cite{elbrachter2018dnn, hutzenthaler2019proof, beck2019full, berner2018analysis}. In these works, a compositional structure of the solution of a PDE is derived via the Feynman-Kac formula. The approach via the method of characteristics of our work can be interpreted as a special case of the approach via the Feynman-Kac formula. 

Moreover, in the framework of parametric problems, high-dimensional problems can be efficiently represented if there exist suitable representations thereof in a general reduced basis, \cite{kutyniok2019theoretical}, or as a polynomial chaos expansion, \cite{SchwabZechHighD2019, OSZ19_839}. 

\subsection{Outline}
In Section \ref{sec:neuralNetworks}, we introduce all notions and fundamental results associated with neural networks. Section \ref{sec:linTransEq} is devoted to the introduction of various types of linear transport equations. In Section \ref{chap:DNNApprox}, we present the four main results of this work: Theorems \ref{thm:mainresult}, \ref{thm:mainWeak}, \ref{thm:resultsourceterm}, and \ref{thm:resultcons}.
These results describe approximation rate bounds for the solutions of the equations of Section \ref{sec:linTransEq} by deep neural networks. Finally, in Section \ref{sec:Extensions}, we discuss natural extensions of the presented results. Some auxiliary results have been deferred to the appendix.

\subsection{Notation}
Below we collect some notation that is used throughout the manuscript. This notation is mostly standard and hence this section can be skipped and only be referred to when a symbol is unclear. 

\medskip

 We denote by $\N=\{1,2,...\}$ the set of all \emph{natural numbers} and define, for $k \in \N$, the set $\N_{\geq k} \coloneqq \{n \in \N \colon n \geq k\}$. For $d_1,d_2\in\N$ we denote by $\mathrm{Id}_{\R^{d_1}}$ the \emph{identity} on $\R^{d_1}$ and by ${0}_{\R^{d_1 \times d_2}}$ we denote the map from $\R^{d_1}$ to $\R^{d_2}$ that vanishes everywhere. We denote by ${0}_{\R^{d_1}}$ the \emph{zero vector} in $\R^{d_1}$. On $\R^{d_1 \times d_2}$ we denote by $\|\cdot\|$ the \emph{euclidean norm} and by $\|\cdot\|_\infty$ the \emph{maximum norm}. The \emph{number of nonzero entries} of a matrix or vector $A\in\R^{d_1 \times d_2}$ is counted by $\|\cdot\|_{0}$, where $		\|A\|_{0} \coloneqq |\{(i,j): A_{i,j}\neq 0\}|$. 
 
If $d_1,d_2,d_3\in\N$, and $A\in\R^{d_1\times d_2}, B\in\R^{d_1 \times d_3}$, then we use the \emph{block matrix notation} and write for the horizontal concatenation of $A$ and $B$ 
\[
\bmat{c c}{A & B} \in\R^{d_1,d_2+d_3}\quad\text{or}\quad \bmat{c|c}{A & B}\in\R^{d_1,d_2+d_3},
\]
where the second notation is used if a stronger delineation between different blocks is appropriate. A similar notation is used for the vertical concatenation of $A\in\R^{d_1\times d_2}$ and $B\in\R^{d_3 \times d_2}$. 

For $d_1, d_2 \in \N$, and $\Omega \subset \R^{d_1}$, we denote by $L^p(\Omega,\R^{d_2}), p\in [1,\infty]$ the \emph{$\R^{d_2}$-valued Lebesgue spaces}, where we set $L^p(\Omega)\coloneqq L^p(\Omega,\R)$. For $k\in \N$, we denote by 
$W^{k, \infty}(\Omega)$, the space of \emph{$k$-times weakly differentiable functions that have all derivatives of order at most $k$ in $L^\infty(\Omega)$}. 
The space $W^{k, \infty}_{\loc}(\Omega)$ consists of functions such their restriction to every compact $K \subset \Omega$ is in $W^{k, \infty}(K)$. 
By $C^k(\Omega,\R^{d_2})$, we denote the set of \emph{$k$-times continuously differentiable functions} mapping from $\Omega$ to $\R^{d_2}$, where we set $C^k(\Omega)\coloneqq C^k(\Omega,\R)$. By $C^k_c(\Omega)$ we denote all functions in $C^k(\Omega)$ that have compact support.

For a Lipschitz continuous function $f: \R^{d_1} \mapsto \R^{d_2}$ we denote 
$$
\mathrm{Lip}_{f} \coloneqq \sup_{x \neq y} \frac{\|f(x) - f(y)\|}{ \|x-y\|}.
$$

Let $a>0$, then we say for two functions $f:(0,a)\to [0,\infty)$ and $g:(0,a)\to[0,\infty)$ that $f(\eps)$ is in $\mathcal{O}(g(\eps))$ for $\eps \to 0$ if there exists $0<\delta<a$ and $C>0$ such that $f(\eps)\leq C g(\eps)$ for all $\eps\in (0,\delta)$.

\section{Neural Networks}\label{sec:neuralNetworks}

In this section, we define neural networks and then recall a couple of operations on these objects that will be used frequently in the sequel. In the definition of neural networks, we distinguish between a neural network as a set of weights and an associated function that we call the realisation of the neural network. This formal approach was introduced in \cite{petersen2018optimal}, but we recall here a slightly different formulation of \cite{guhring2019error} for neural networks that allow so-called skip connections.

\begin{definition}
	Let $d,L\in\N$. A \emph{neural network (NN) $\Phi$ with input dimension $d$ and $L$ layers} is a sequence of matrix-vector tuples
	\[
		\Phi=((A_1,b_1),(A_2,b_2),\dots,(A_L,b_L)),
		\]
		where, for $N_0=d$ and $N_1,\ldots,N_L\in\N$, each $A_\ell$ is an $N_\ell\times \sum_{k=0}^{\ell-1} N_k$ matrix, and $b_\ell\in\R^{N_\ell}$.

    Let $\varrho:\R\to\R$ be the \emph{ReLU}, i.e., $\varrho(x) = \max\{0,x\}$ and let $\Phi$  be a NN as above. Then we define the associated \emph{realisation of $\Phi$} as the map $\Realization(\Phi):\R^d\to\R^{N_L}$ such that 
		\[
			\Realization(\Phi)(x)=x_L,
			\]
			where $x_L$ results from the following scheme:
			\begin{align*}
				&x_0:=x,\\
				&x_\ell:=\varrho\left(A_\ell\bmat{c|c|c}{x_0^T & \ldots &  x_{\ell-1}^T}^T+b_l\right), \quad\text{for }\ell=1,\ldots L-1,\\
				&x_L:=A_L \bmat{c|c|c}{x_0^T & \ldots &  x_{L-1}^T}^T+b_L.
			\end{align*}
			Here $\varrho$ acts componentwise, i.e., $\varrho(y)=[\varrho(y^1),\ldots,\varrho(y^m)]$ for $y=[y^1,\ldots,y^m]\in\R^m$. We sometimes write $A_\ell$ in block-matrix form as
			\[
				A_\ell=\bmat{c|c|c}{
					A_{\ell,0} & \ldots & A_{\ell,\ell-1}
					},
				\]
				where $A_{\ell,k}$ is an $N_\ell \times N_k$ matrix for $k=0,\ldots,\ell-1$ and $\ell=1,\ldots,L$. Then
			\begin{align*}
				&x_\ell = \varrho\left(A_{\ell,0} x_0+\ldots+ A_{\ell,\ell-1} x_{\ell-1}+b_\ell\right), \quad\text{for }\ell=1,\ldots L-1,\\
				&x_L=A_{L,0} x_0+\ldots+ A_{L,L-1} x_{L-1}+b_L.
			\end{align*}

We call $N(\Phi):=d+\sum_{j=1}^L N_j$ the \emph{number of neurons} of the NN $\Phi$, $L=L(\Phi)$ the \emph{number of layers}, and $W(\Phi) \coloneqq \sum_{j=1}^L (\|A_j\|_{0}+\|b_j\|_{0})$ is called the \emph{number of weights} of $\Phi$. Moreover, we refer to $N_L$ as \emph{output dimension} of $\Phi$.
\end{definition}

\subsection{Standard operations on neural networks}

We collect four standard operations that can be performed with NNs below. First, we can concatenate two NNs $\Phi^1,\Phi^2$ in such a way that the realisation of the concatenation is a composition of the individual realisations of $\Phi^1,\Phi^2$.

\begin{proposition}[{\cite[Remark 2.8]{guhring2019error}}] \label{prop:SparseConc}
	Let	$\Phi^1, \Phi^2$ be two NNs such that the input dimension $d$ of $\Phi^1$ is equal to the output dimension of $\Phi^2$. Then there exists a NN $\Phi^1\odot\Phi^2$ such that 
	\begin{itemize}
	    \item $L\left(\Phi^1\odot\Phi^2\right) = L\left(\Phi^1\right) + L\left(\Phi^2\right)$,
	    \item $W\left(\Phi^1\odot\Phi^2\right) \leq 2 W\left(\Phi^1\right) + 2 W\left(\Phi^2\right)$,
	    \item $\Realization\left(\Phi^1\odot\Phi^2\right)(x) = \Realization\left(\Phi^1\right) \circ \Realization\left(\Phi^2\right)(x)$ for all $x \in \R^d$.
	\end{itemize}
	We call $\Phi^1\odot\Phi^2$ the \emph{sparse concatenation of $\Phi^1$ and $\Phi^2$.}
\end{proposition}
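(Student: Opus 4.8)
The plan is to realise $\Phi^1\odot\Phi^2$ by stacking $\Phi^2$, then a two-layer \emph{identity network} on $\R^d$, then $\Phi^1$, and then collapsing the two seams where an affine output layer feeds directly into a layer of the form $x\mapsto\varrho(Ax+b)$, so that the depth comes out as $L(\Phi^1)+L(\Phi^2)$ rather than $L(\Phi^1)+L(\Phi^2)+2$. Write $L_i:=L(\Phi^i)$, $\Phi^2=((A^2_1,b^2_1),\dots,(A^2_{L_2},b^2_{L_2}))$ and $\Phi^1=((A^1_1,b^1_1),\dots,(A^1_{L_1},b^1_{L_1}))$, where $A^1_1$ has $d$ columns since $\Phi^1$ has input dimension $d$. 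Since $\varrho(t)-\varrho(-t)=t$ for every $t\in\R$, the network $\Phi^{\mathrm{Id}}_d:=\big((\tilde A_1,0),(\tilde A_2,0)\big)$ with $\tilde A_1:=\bmat{c}{\mathrm{Id}_{\R^d} \\ -\mathrm{Id}_{\R^d}}\in\R^{2d\times d}$ and $\tilde A_2:=\bmat{cc}{\mathrm{Id}_{\R^d} & -\mathrm{Id}_{\R^d}}\in\R^{d\times 2d}$ satisfies $\Realization(\Phi^{\mathrm{Id}}_d)=\mathrm{Id}_{\R^d}$. I would then define $\Phi^1\odot\Phi^2$ as: the first $L_2-1$ layers of $\Phi^2$, unchanged; the layer $\big(\tilde A_1 A^2_{L_2},\,\tilde A_1 b^2_{L_2}\big)$, obtained by feeding the affine map $\Realization(\Phi^2)$ into the first layer of $\Phi^{\mathrm{Id}}_d$; the layer $\big(A^1_1\tilde A_2,\,b^1_1\big)$, obtained by feeding the linear map $\tilde A_2(\cdot)$ into the first layer of $\Phi^1$; and the layers $2,\dots,L_1$ of $\Phi^1$, where in the block-matrix notation every block $A^1_{\ell,0}$ that reads the input of $\Phi^1$ is replaced by $A^1_{\ell,0}\tilde A_2$ while all other skip-connection blocks are merely re-indexed. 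This network has $(L_2-1)+1+1+(L_1-1)=L_1+L_2$ layers, which is the first bullet.

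The point of routing through $\Phi^{\mathrm{Id}}_d$ rather than directly fusing $A^2_{L_2}$ with $A^1_1$ is sparsity: a direct fusion would produce the possibly dense matrix $A^1_1 A^2_{L_2}$, whereas with the identity network inserted each new matrix or vector is block-assembled from a single copy of an original one and its negative, namely $\tilde A_1 A^2_{L_2}=\bmat{c}{A^2_{L_2} \\ -A^2_{L_2}}$, $\tilde A_1 b^2_{L_2}=\bmat{c}{b^2_{L_2} \\ -b^2_{L_2}}$, $A^1_1\tilde A_2=\bmat{cc}{A^1_1 & -A^1_1}$ and $A^1_{\ell,0}\tilde A_2=\bmat{cc}{A^1_{\ell,0} & -A^1_{\ell,0}}$; each of these therefore has at most twice as many nonzero entries as the object it came from. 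Every remaining layer is copied verbatim, and the re-indexed skip-connection blocks keep their entry counts. Summing the weight contributions layer by layer and noting that, since all entry counts are non-negative, the terms coming from $\Phi^2$ (including the first merged layer) are bounded by $2W(\Phi^2)$ and those coming from $\Phi^1$ (including the second merged layer) by $2W(\Phi^1)$, gives $W(\Phi^1\odot\Phi^2)\le 2W(\Phi^1)+2W(\Phi^2)$, the second bullet.

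For the realisation identity I would run the forward pass of $\Phi^1\odot\Phi^2$ on an arbitrary $x\in\R^d$: the first $L_2-1$ layers reproduce the hidden activations of $\Phi^2$; the next layer outputs $\varrho\big(\tilde A_1\Realization(\Phi^2)(x)\big)$; the layer after it outputs $\varrho\big(A^1_1\tilde A_2\,\varrho(\tilde A_1\Realization(\Phi^2)(x))+b^1_1\big)=\varrho\big(A^1_1\Realization(\Phi^2)(x)+b^1_1\big)$, exactly the first hidden activation of $\Phi^1$ at input $\Realization(\Phi^2)(x)$; and because $A^1_{\ell,0}\tilde A_2\,\varrho(\tilde A_1\Realization(\Phi^2)(x))=A^1_{\ell,0}\Realization(\Phi^2)(x)$, the remaining layers reproduce the later activations of $\Phi^1$ at the same input, so the output equals $\Realization(\Phi^1)\circ\Realization(\Phi^2)(x)$. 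The step I expect to need the most care is the skip-connection bookkeeping: since in this model a layer may read from all preceding layers, I must check that in each new layer matrix the blocks reading layers that originally belonged to $\Phi^2$ are zero, that the internal skip connections of $\Phi^1$ are shifted consistently, and that the only blocks that genuinely change are precisely those through which a layer of $\Phi^1$ read $\Phi^1$'s input, which are the ones composed with $\tilde A_2$. The cases $L_1=1$ or $L_2=1$ are handled the same way, with the relevant block of copied layers empty and, when $L_1=1$, the second merged layer kept affine since it is then the output layer.
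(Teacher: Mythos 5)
Your construction is correct and is essentially the same sparse-concatenation argument as in the cited reference (G\"uhring et al., Remark 2.8): insert the two-layer ReLU identity network based on $\varrho(t)-\varrho(-t)=t$, fuse the affine seams to get depth $L(\Phi^1)+L(\Phi^2)$, and observe that each merged block is a signed duplicate of an original block, which yields the factor $2$ in the weight bound. Your handling of the skip-connection blocks $A^1_{\ell,0}\mapsto A^1_{\ell,0}\tilde A_2$ and of the degenerate cases $L_1=1$, $L_2=1$ is exactly what is needed in this architecture.
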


An additional operation that is frequently applied to NNs in the sequel is that of parallelisation. This procedure puts NNs in parallel such that the output of the realisation is a vector containing the outputs of the original NNs. 

\begin{proposition}[{\cite[Remark 2.10]{guhring2019error}}]\label{prop:parallelization}
	Let $n, d\in\N$ and, for $i=1,\ldots,n$, let $\Phi^i$ be a NN with $d$-dimensional input and $L_i \in \N$ layers.	Then there exists a NN $\mathrm{P}(\Phi^1,\dots, \Phi^n)$ 
	with $d$-dimensional input such that 
	\begin{itemize}
	    \item $L\left(\mathrm{P}\left(\Phi^1,\dots, \Phi^n\right)\right) = \max\{L_1,\ldots,L_n\}$, 
	    \item $W\left(\mathrm{P}\left(\Phi^1,\ldots,\Phi^n\right)\right)=\sum_{i=1}^n W\left(\Phi^i\right)$,
	    \item $\Realization\left(\mathrm{P}\left(\Phi^1,\ldots,\Phi^n\right)\right)(x)=\left(\Realization\left(\Phi^1\right)(x),\ldots,\Realization\left(\Phi^n\right)(x)\right)$ for all $x \in \R^d$.
	\end{itemize}
    We call $\mathrm{P}(\Phi^1,\dots, \Phi^n)$ the \emph{parallelisation of $\Phi^1, \dots, \Phi^n$.}	
\end{proposition}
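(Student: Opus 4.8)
The plan is to first dispose of the case where all depths coincide, $L_1=\cdots=L_n=:L$, by an explicit block-diagonal construction, and then to reduce the general case to this one via a depth-extension lemma that crucially exploits the skip connections present in the network model of this paper.

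\emph{Equal depths.} For each $i$ and each layer $\ell$, write the weight matrix in block form $A^i_\ell=\bmat{c|c|c}{A^i_{\ell,0} & \cdots & A^i_{\ell,\ell-1}}$, where $A^i_{\ell,k}$ multiplies the $k$-th layer activation. I would define $\mathrm{P}(\Phi^1,\dots,\Phi^n)$ layer by layer as follows: in layer $\ell$, the block acting on the shared input $x_0$ is the \emph{vertical} concatenation of $A^1_{\ell,0},\dots,A^n_{\ell,0}$; for $1\le k\le \ell-1$ the block acting on the $k$-th (concatenated) activation is the block-\emph{diagonal} matrix with diagonal blocks $A^1_{\ell,k},\dots,A^n_{\ell,k}$; and the bias is the vertical concatenation of $b^1_\ell,\dots,b^n_\ell$. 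An easy induction on $\ell$ shows that the $\ell$-th layer activation of this network is exactly the vertical concatenation of the $\ell$-th layer activations of $\Phi^1,\dots,\Phi^n$ on the same input $x$, since each diagonal block feeds only on the corresponding network's own activations while the $x_0$-blocks reproduce the shared skip connection; hence the realisation is $\left(\Realization(\Phi^1)(x),\dots,\Realization(\Phi^n)(x)\right)$. For the weights, note that every off-diagonal block is zero, so the number of nonzero entries of each weight matrix and each bias of $\mathrm{P}(\Phi^1,\dots,\Phi^n)$ equals the sum over $i$ of the corresponding counts for $\Phi^i$; summing over the $L$ layers gives $W=\sum_{i=1}^n W(\Phi^i)$, and the depth is plainly $L$.

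\emph{Reduction to equal depths.} I would prove the auxiliary fact that any NN $\Phi$ with $L_0<L$ layers can be replaced by a NN $\widetilde\Phi$ with exactly $L$ layers, the same realisation, and the same number of weights. Keep layers $1,\dots,L_0-1$ of $\Phi$ unchanged; insert ``dead'' layers $L_0,\dots,L-1$ of width one, each with zero weight matrix and zero bias (their activation is $\varrho(0)=0$, and they contribute no nonzero weights); and take layer $L$ of $\widetilde\Phi$ to be the original affine output layer $(A^\Phi_{L_0},b^\Phi_{L_0})$ of $\Phi$ with $A^\Phi_{L_0}$ padded by zero columns addressing the new dead activations, the skip connection to layer $L_0-1$ being retained verbatim. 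Because the output layer applies no $\varrho$, the padded zeros and the vanishing dead activations leave the output unchanged, and the padding introduces no new nonzero entries, so $W(\widetilde\Phi)=W(\Phi)$. Applying this to each $\Phi^i$ with $L:=\max_j L_j$ and then invoking the equal-depth construction yields all three claimed properties.

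The step I expect to be the genuine obstacle is obtaining the \emph{exact} weight identity $W(\mathrm{P}(\Phi^1,\dots,\Phi^n))=\sum_{i=1}^n W(\Phi^i)$ in the presence of differing depths: the naive fix of padding a short network with two-neuron ReLU identity gadgets would only give $W\lesssim\sum_i W(\Phi^i)$ with a constant factor, not equality. The skip-connection architecture is precisely what makes the dead-layer trick work, since the true output can be deferred to the final affine layer without ever passing through a ReLU. Everything else — the induction identifying the layer activations of $\mathrm{P}(\Phi^1,\dots,\Phi^n)$ with concatenations, and the counting of nonzero entries in block-diagonal matrices — is routine bookkeeping.
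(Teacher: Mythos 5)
Your construction is correct, and it is essentially the standard argument: the paper itself does not prove this proposition but cites it from Remark 2.10 of G\"uhring--Kutyniok--Petersen, where the same block-diagonal stacking combined with depth synchronisation through the skip connections is used. Your closing observation is also exactly the right one: it is the skip-connection architecture (deferring each network's output to the final affine layer via zero-weight ``dead'' layers) that yields the exact identities $L=\max_i L_i$ and $W=\sum_i W(\Phi^i)$, whereas in a strictly feed-forward formalism one would only obtain these up to constant factors.
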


We will occasionally need to construct NNs the realisation of which is the sum of functions that we had approximated beforehand by realisations of NNs. In this situation, the following operation that emulates a sum of NNs is convenient.

\begin{proposition}[Sums of NNs]\label{def:addofNN}
Let $d\in\N$, and $\Phi^1, \Phi^2$ be two NNs with $d$-dimensional input and one-dimensional output.	Then there exists a NN $\Phi^1 \oplus \Phi^2$ with $d$-dimensional input such that 
	\begin{itemize}
	    \item $L\left(\Phi^1 \oplus \Phi^2\right) = \max\{L(\Phi^1), L(\Phi^2)\}$, 
	    \item $W\left(\Phi^1 \oplus \Phi^2\right) = W(\Phi^1) + W(\Phi^2)$,
	    \item $\Realization\left(\Phi^1 \oplus \Phi^2\right)(x)= \Realization(\Phi^1)(x) + \Realization(\Phi^2)(x)$ for all $x \in \R^d$.
	\end{itemize}
    We call $\Phi^1 \oplus \Phi^2$ the \emph{sum of $\Phi^1$ and $\Phi^2$.}	
\end{proposition}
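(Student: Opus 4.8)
The plan is to construct $\Phi^1 \oplus \Phi^2$ explicitly by stacking the two networks side by side and then summing their outputs in the final layer. Concretely, if $\Phi^1 = ((A^1_1,b^1_1),\dots,(A^1_{L_1},b^1_{L_1}))$ and $\Phi^2 = ((A^2_1,b^2_1),\dots,(A^2_{L_2},b^2_{L_2}))$, then without loss of generality assume $L_1 \geq L_2$ (otherwise swap the roles). The only subtlety is that the two networks may have different depths, so before combining I first pad the shorter network $\Phi^2$ to depth $L_1$. This padding can be done using the identity network trick: the ReLU identity on $\R$ can be realised via $x = \varrho(x) - \varrho(-x)$, so one can prepend or append identity layers without changing the realisation and without increasing the weight count by more than a constant. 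Actually, for the purposes of this proposition it is cleaner to append such layers so that both networks formally have $L_1$ layers; since we want $W(\Phi^1\oplus\Phi^2) = W(\Phi^1)+W(\Phi^2)$ exactly, I would instead exploit the skip-connection structure of the network model: in a network with skip connections, the output $x_0$ (the input) is available to every layer, and more importantly, once a quantity has been computed at layer $\ell$, it remains accessible to all later layers, so one only needs to route the output of the shorter network forward without re-applying ReLU (which would be wrong on negative values). Because the skip-connection model lets layer $\ell$ read all of $x_0,\dots,x_{\ell-1}$, the standard construction from \cite{guhring2019error} handles exactly this, and the weight count is additive rather than merely bounded by a constant times the sum.

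The key steps, in order, are: (i) set $L := \max\{L(\Phi^1), L(\Phi^2)\}$ and, using Proposition~\ref{prop:parallelization}, form the parallelisation $\mathrm{P}(\Phi^1,\Phi^2)$, which has $L$ layers, weight count $W(\Phi^1)+W(\Phi^2)$, and realisation $x \mapsto (\Realization(\Phi^1)(x), \Realization(\Phi^2)(x)) \in \R^2$; (ii) post-compose with the linear map $\R^2 \to \R$, $(y_1,y_2)\mapsto y_1+y_2$ — but rather than using sparse concatenation (which would double the weights and add a layer), absorb this linear map directly into the final layer $A_L$ of the parallelised network. Since the final layer of any NN in this model is purely affine (no ReLU is applied), replacing the block-diagonal output matrix $\mathrm{diag}(A^1_{L_1}, A^2_{L_2})$-type structure with its row-sum $[\,A^1_{L_1} \ \ A^2_{L_2}\,]$ (suitably interpreted with the skip-connection block structure) yields a single network whose realisation is exactly $\Realization(\Phi^1)(x) + \Realization(\Phi^2)(x)$. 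This modification does not increase the number of nonzero weights beyond $W(\Phi^1)+W(\Phi^2)$ — in fact it leaves them unchanged, since we merely relocate the same entries into one output row — and it does not change the number of layers, so $L(\Phi^1\oplus\Phi^2) = \max\{L(\Phi^1),L(\Phi^2)\}$. Finally, (iii) verify the three bullet points directly from the construction.

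The main obstacle, such as it is, is bookkeeping with the depth mismatch: when $L_1 \neq L_2$, the shorter network's output is produced at an earlier layer, and one must make sure this output is carried forward to layer $L$ without an intervening ReLU nonlinearity corrupting it (a raw network output can be negative). In the plain feedforward model this forces the ReLU-identity padding $\varrho(x)-\varrho(-x)$ and hence a constant-factor blow-up in weights; but in the skip-connection model of this paper the intermediate output is simply read off via the skip connection into the final affine layer $A_L$, so no padding is needed and the weight count stays exactly additive. I would therefore phrase the proof as: apply Proposition~\ref{prop:parallelization} to get a $\max\{L_1,L_2\}$-layer network computing the pair of outputs with additive weights, then fold the addition map $(y_1,y_2)\mapsto y_1+y_2$ into the (affine) output layer, and read off the three claimed properties. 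Everything reduces to the analogous facts already recorded for $\mathrm{P}$ in Proposition~\ref{prop:parallelization}, so no genuinely new estimate is required.
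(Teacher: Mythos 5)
Your proposal is correct and matches the paper's proof essentially verbatim: the paper also forms $\mathrm{P}(\Phi^1,\Phi^2)$ via Proposition~\ref{prop:parallelization} and then replaces the final affine layer $(A_L,b_L)$ by $\bigl(\bmat{cc}{1 & 1}A_L,\ \bmat{cc}{1 & 1}b_L\bigr)$, which is exactly your ``fold the addition map into the output layer'' step. No further comment is needed.
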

\begin{proof}
Let 
\begin{align*}
((A_1,b_1),(A_2,b_2),\dots,(A_L,b_L)) \coloneqq \mathrm{P}(\Phi^1, \Phi^2).
\end{align*}
Then we set 
$$
\widetilde{A}_L \coloneqq \bmat{cc}{1 & 1} A_L \quad \text{ and } \widetilde{b}_L \coloneqq \bmat{cc}{1 & 1} b_L. 
$$
Clearly, $\|\widetilde{A}_L\|_{\ell_0} \leq  \|{A}_L\|_{\ell_0}$ and $\|\tilde{b}_L\|_{\ell_0} \leq  \|b_L\|_{\ell_0}$. We define 
$$
\Phi^1 \oplus \Phi^2 \coloneqq \left((A_1,b_1),(A_2,b_2),\dots,(A_{L-1},b_{L-1}), \left(\widetilde{A}_L,\tilde{b}_L\right)\right).
$$
Per construction, 
$$
\Realization\left(\Phi^1 \oplus \Phi^2\right)(x) = \bmat{cc}{1 & 1} \left(\begin{array}{c}
    \Realization\left(\Phi^1\right)(x)\\ \Realization(\Phi^2)(x)
\end{array}\right) = \Realization\left(\Phi^1\right)(x) + \Realization\left(\Phi^2\right)(x) \quad \text{  for every }x \in \R^d. 
$$
\end{proof}

Finally, we can construct a NN that represents the multiplication of two NNs $\Phi^1$ and $\Phi^2$ in the sense that its realisation is close to the multiplication of the realisations of $\Phi^1$ and $\Phi^2$. In contrast to the previous operations, this emulation of the multiplication is not exact but requires a parameter $\eps > 0$ describing how accurately the multiplication is implemented.

\begin{proposition}[Multiplication of NN] \label{prop:multiplication}
Let  $\Phi^1, \Phi^2$ be NNs with input dimensions $d_1$ and $d_2$ and output dimension $1$. Then, for every $\eps \in (0,1)$, there exists a NN $\Phi^1 \otimes^\eps \Phi^2$ such that, for a universal constant $c_1 >0$ and for $c_2= c_2(\|\Realization(\Phi^1)\|_{L^\infty},\|\Realization(\Phi^2)\|_{L^\infty}) > 0$, there holds 
\begin{itemize}
    \item $L\left(\Phi^1 \otimes^\eps \Phi^2\right)  \leq \max\{L\left(\Phi^1\right),L\left(\Phi^2\right)\} + c_1 \ln(1/\eps) + c_2$,
    \item $W\left(\Phi^1 \otimes^\eps \Phi^2\right) \leq c_1 \ln(1/\eps) + c_2+  2W\left(\Phi^1\right)  + 2W(\Phi^2)$,
    \item $\left\|\mathrm{R}\left(\Phi^1 \otimes^\eps \Phi^2\right) -\mathrm{R}\left(\Phi^1\right) \mathrm{R}\left(\Phi^2\right)\right\|_{L^\infty} \leq \eps$.
\end{itemize}
\end{proposition}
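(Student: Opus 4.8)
The plan is to reduce everything to the well-known fact that squaring can be $\eps$-approximated on a bounded interval by a ReLU network of logarithmic depth and width, and then use the polarization identity $xy = \tfrac{1}{4}\big((x+y)^2 - (x-y)^2\big)$ to build multiplication. Concretely, let $\Phi^1,\Phi^2$ have one-dimensional outputs, and set $K_1 \coloneqq \|\Realization(\Phi^1)\|_{L^\infty}$, $K_2 \coloneqq \|\Realization(\Phi^2)\|_{L^\infty}$, $K \coloneqq K_1 + K_2$. The image of $x \mapsto (\Realization(\Phi^1)(x), \Realization(\Phi^2)(x))$ lies in $[-K_1,K_1]\times[-K_2,K_2]$, so the relevant arguments $s \coloneqq \Realization(\Phi^1)(x)+\Realization(\Phi^2)(x)$ and $t \coloneqq \Realization(\Phi^1)(x)-\Realization(\Phi^2)(x)$ lie in $[-K,K]$.

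First I would invoke (or quickly re-derive, in an appendix, from Yarotsky's sawtooth construction) a network $\Psi_{K,\delta}$ such that $|\Realization(\Psi_{K,\delta})(y) - y^2| \le \delta$ for all $y \in [-K,K]$, with $L(\Psi_{K,\delta}) \le c_1\ln(1/\delta) + c(K)$ and $W(\Psi_{K,\delta}) \le c_1\ln(1/\delta) + c(K)$; the constant $c(K)$ absorbs the rescaling $y \mapsto y/K$ and $z \mapsto K^2 z$ needed to move the standard construction (valid on $[0,1]$ or $[-1,1]$) to $[-K,K]$. Second I would build the affine ``pre-processing'' network that maps $\R^{N_L^1 + N_L^2}$-type combined output to the pair $(s,t)$: using Proposition~\ref{prop:parallelization} to form $\mathrm{P}(\Phi^1,\Phi^2)$, then appending a final affine layer with the $2\times 2$ matrix $\big[\begin{smallmatrix}1 & 1\\ 1 & -1\end{smallmatrix}\big]$ applied to the two outputs; call the result $\Phi^{+,-}$, which has depth $\max\{L(\Phi^1),L(\Phi^2)\}$ and weights bounded by $W(\Phi^1)+W(\Phi^2)$ plus a constant, since one can fold the $2\times 2$ matrix into the last layer rather than adding a new one. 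Third, I would place two copies of $\Psi_{K,\delta}$ in parallel (Proposition~\ref{prop:parallelization}), concatenate (Proposition~\ref{prop:SparseConc}) this parallel squaring network after $\Phi^{+,-}$ so as to compute $(\approx s^2, \approx t^2)$, and finish with a last affine layer implementing $(a,b)\mapsto \tfrac14(a-b)$, or equivalently use Proposition~\ref{def:addofNN}-style bookkeeping. Choosing $\delta \coloneqq 2\eps$ gives the error bound: $\big|\tfrac14(\Realization(\Psi)(s)-\Realization(\Psi)(t)) - \tfrac14(s^2-t^2)\big| \le \tfrac14(\delta + \delta) = \tfrac{\delta}{2} = \eps$, and $\tfrac14(s^2-t^2) = \Realization(\Phi^1)(x)\Realization(\Phi^2)(x)$ exactly.

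Finally I would tally the size bounds. Depth: the squaring block adds $c_1\ln(1/\delta) + c(K) = c_1\ln(1/\eps) + c_2$ layers on top of $\max\{L(\Phi^1),L(\Phi^2)\}$ (adjusting $c_1,c_2$ by the $\ln 2$ from $\delta = 2\eps$ and the constant affine layers), which matches the claim. Weights: sparse concatenation doubles the weight count of each piece, so $W(\Phi^1\otimes^\eps\Phi^2) \le 2\big(W(\Phi^{+,-})\big) + 2\big(W(\text{parallel squaring})\big) + O(1) \le 2(W(\Phi^1)+W(\Phi^2)) + 2(c_1\ln(1/\eps)+c(K)) + O(1)$, again matching the stated form after renaming constants; crucially $c_2$ depends only on $K_1, K_2$ (hence on $\|\Realization(\Phi^1)\|_{L^\infty}$ and $\|\Realization(\Phi^2)\|_{L^\infty}$) and not on the architectures of $\Phi^1,\Phi^2$, as required.

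The main obstacle I expect is purely bookkeeping rather than conceptual: getting the constants to depend \emph{only} on the $L^\infty$-norms and not, say, on the output dimensions $N_L$ of the intermediate layers or on $d_1,d_2$, and making sure the rescaling to the interval $[-K,K]$ does not smuggle a hidden dependence on $\eps$ into $c_2$. One has to be careful that the affine rescale $y \mapsto y/K$ feeding into the standard squaring network, and the post-multiplication by $K^2$, are both absorbable into existing layers (so they cost $O(1)$ weights, with the size of those weights irrelevant since $W(\cdot)$ counts nonzero entries, not magnitudes) and that the additive constant $c(K)$ in the depth/width of the squaring network is genuinely $\eps$-independent. Handling $d_1 \ne d_2$ requires a preliminary remark that one may assume $d_1 = d_2$ by pre-composing with coordinate projections, or simply noting Proposition~\ref{prop:parallelization} as stated assumes equal input dimension; the cleanest fix is to state the multiplication result for a common input dimension $d$ and note the general case follows by an obvious embedding, which I would flag at the start of the proof.
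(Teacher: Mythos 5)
Your construction is correct and is essentially the paper's: the paper defines $\Phi^1 \otimes^\eps \Phi^2 \coloneqq \times^{\eps,\widetilde{M}} \sconc \mathrm{P}(\Phi^1,\Phi^2)$, citing Yarotsky's ready-made multiplication network $\times^{\eps,M}$ (whose internal construction is exactly your polarization-identity-plus-squaring argument) and then applying Propositions~\ref{prop:SparseConc} and~\ref{prop:parallelization}. The only difference is that you unpack that cited lemma; your bookkeeping concerns (where to fold the $2\times 2$ matrix so the factor in front of $W(\Phi^1)+W(\Phi^2)$ stays at $2$, and the mismatch $d_1 \neq d_2$ in the hypotheses of Proposition~\ref{prop:parallelization}) are real but minor, and the second one is present in the paper's own proof as well.
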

\begin{proof}
By \cite[Proposition 3]{yarotsky2017error}, there exists, for every $\eps\in (0,1)$ and $M \in \N$, a NN $\times^{\eps,M}$ with two-dimensional input and one-dimensional output satisfying 
$$
\left|\Realization\left(\times^{\eps,M}\right)(x,y) - xy\right| \leq \eps,
$$
for all $x,y \in [-M,M]$. Moreover, 
$$
W\left(\times^{\eps,M}\right) \leq c_2 - c_1 \ln(\eps),
$$
for a universal constant $c_1$ and $c_2 = c_2(M)$.

We set $\widetilde{M} \coloneqq \max \{\|\Realization(\Phi^1)\|_{L^\infty},\|\Realization(\Phi^2)\|_{L^\infty} \}$ and define
$$
\Phi^1 \otimes^\eps \Phi^2 \coloneqq \times^{\eps,\widetilde{M}}
\sconc \mathrm{P}(\Phi_1, \Phi_2).
$$
The result now follows from Propositions \ref{prop:SparseConc} and \ref{prop:parallelization}.
\end{proof}

\subsection{Approximation of smooth functions}

In addition to the operations on NNs described in the previous section, we will frequently invoke the following standard approximation result of smooth functions by realisations of NNs.

\begin{theorem}[{\cite[Theorem 1]{yarotsky2017error}}]\label{thm:smoothFunctionsApprox}
Let $k,d \in \N$ and 
\begin{align*}
    F_{k,d}\coloneqq \left\{f \in W^{k,\infty}\left([0,1]^d\right) \, \colon \, \|f\|_{W^{k,\infty}([0,1]^d)} \leq 1\right\}.
\end{align*}
Then there exists $c=c(k,d) > 0$ such that, for every $f \in F_{k,d}$ and every $\eps \in (0,1)$, there exists a NN $\Phi^{f,\eps}$ with $d$-dimensional input such that, 
\begin{itemize}
\item $L(\Phi^{f,\eps})\leq c\cdot (\ln(1/\eps)+1)$,
\item $W(\Phi^{f,\eps})\leq c\,\eps^{-d/k}\cdot (\ln(1/\eps)+1)$,
\item $\|f-\Realization(\Phi^{f,\eps})\|_{L^\infty} < \eps$.
\end{itemize}
\end{theorem}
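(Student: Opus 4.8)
Since Theorem~\ref{thm:smoothFunctionsApprox} is the well-known approximation result of Yarotsky, the plan is to reconstruct its proof from the building blocks already available in this section: the multiplication network (Proposition~\ref{prop:multiplication}), parallelisation (Proposition~\ref{prop:parallelization}), sums (Proposition~\ref{def:addofNN}) and sparse concatenation (Proposition~\ref{prop:SparseConc}). The overall strategy is \emph{localise, Taylor-expand, emulate}: cover $[0,1]^d$ by a partition of unity made from products of one-dimensional hat functions, replace $f$ on each patch by a Taylor polynomial of degree $k-1$, and then realise the resulting piecewise polynomial approximately by a ReLU network.

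First I would fix $N \in \N$, set $\psi(t) := \max\{0,1-|t|\}$, and for each $\mathbf m \in \{0,\dots,N\}^d$ define $\phi_{\mathbf m}(x) := \prod_{j=1}^d \psi(N x_j - m_j)$. These satisfy $0 \le \phi_{\mathbf m} \le 1$, $\sum_{\mathbf m} \phi_{\mathbf m} \equiv 1$ on $[0,1]^d$, $\supp \phi_{\mathbf m} \subseteq \{x : \|x - \mathbf m/N\|_\infty \le 1/N\}$, and every point lies in the support of at most $2^d$ of them. Since $\|f\|_{W^{k,\infty}([0,1]^d)} \le 1$, Taylor's theorem (with integral remainder, so that weak derivatives suffice) shows that the degree-$(k-1)$ Taylor polynomial $P_{\mathbf m}$ of $f$ about $\mathbf m/N$ has coefficients bounded by a constant depending only on $k,d$ and satisfies $|f(x) - P_{\mathbf m}(x)| \le c(k,d) N^{-k}$ on $\supp\phi_{\mathbf m}$. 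Hence
\[
 \Big| f(x) - \sum_{\mathbf m} \phi_{\mathbf m}(x)\, P_{\mathbf m}(x) \Big| \;\le\; \sum_{\mathbf m} \phi_{\mathbf m}(x)\, |f(x) - P_{\mathbf m}(x)| \;\le\; c(k,d)\, N^{-k}.
\]

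Next I would emulate $\sum_{\mathbf m} \phi_{\mathbf m} P_{\mathbf m}$ by a network. Each summand is a linear combination of the $\binom{k-1+d}{d} = \mathcal{O}_{k,d}(1)$ functions $x \mapsto \phi_{\mathbf m}(x)\,(x - \mathbf m/N)^\alpha$, $|\alpha|\le k-1$; each of these is a product of at most $k-1+d$ affine functions of $x$, all bounded uniformly on $[0,1]^d$ by a $k,d$-dependent constant (using $\|f\|_{W^{k,\infty}}\le1$ for the polynomial coefficients). Applying Proposition~\ref{prop:multiplication} along a balanced binary product tree of depth $\mathcal{O}_{k,d}(1)$ — with the grid translations $x \mapsto N x_j - m_j$ built exactly into the first affine layer — yields, for any accuracy $\delta>0$, a network approximating one such product to within $\delta$ with depth $\mathcal{O}(\ln(1/\delta)) + \mathcal{O}_{k,d}(1)$ and $\mathcal{O}(\ln(1/\delta)) + \mathcal{O}_{k,d}(1)$ weights. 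Parallelising (Proposition~\ref{prop:parallelization}) over all $(N+1)^d$ patches and the $\mathcal{O}_{k,d}(1)$ monomials per patch, then adding (Proposition~\ref{def:addofNN}) the rescaled outputs, produces a network $\Phi^{f,\eps}$ with $L(\Phi^{f,\eps}) = \mathcal{O}(\ln(1/\delta)) + \mathcal{O}_{k,d}(1)$, $W(\Phi^{f,\eps}) = \mathcal{O}_{k,d}\big((N+1)^d\,(\ln(1/\delta)+1)\big)$, and $\big\|\Realization(\Phi^{f,\eps}) - \sum_{\mathbf m}\phi_{\mathbf m}P_{\mathbf m}\big\|_{L^\infty} \le \mathcal{O}_{k,d}\big((N+1)^d\big)\,\delta$. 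Finally I choose $N \sim \eps^{-1/k}$ to make the Taylor error $\le \eps/2$ and $\delta \sim \eps\,(N+1)^{-d}$ to make the emulation error $\le\eps/2$; then $\ln(1/\delta) = \mathcal{O}(\ln(1/\eps))$ and the claimed bounds $L \lesssim \ln(1/\eps)+1$, $W \lesssim \eps^{-d/k}(\ln(1/\eps)+1)$, $\|f - \Realization(\Phi^{f,\eps})\|_{L^\infty} < \eps$ follow.

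The hard part will be the error and size bookkeeping in the emulation step. One must verify that the additive errors of the many non-exact multiplications aggregate only \emph{linearly} in the number of patches — so that compensating for them only shrinks $\delta$ polynomially in $\eps$, costing a harmless logarithmic factor — and, crucially, that a patch $\mathbf m$ whose bump $\phi_{\mathbf m}$ vanishes at a point $x$ still contributes at most $\delta$ (not $\mathcal{O}(1)$) to the error there, which holds because $\Realization(\Phi^1\otimes^\eps\Phi^2)$ is uniformly within $\delta$ of the true product even when one factor is zero. One also has to keep the constant $c_2$ in Proposition~\ref{prop:multiplication} — which depends on the sup-norms of the factors — $k,d$-dependent but $\eps$-independent, which is exactly why the uniform bounds on $\phi_{\mathbf m}$ and on the Taylor coefficients (coming from $\|f\|_{W^{k,\infty}}\le1$) matter.
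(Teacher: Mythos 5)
The paper does not prove this statement itself; it imports it verbatim as \cite[Theorem 1]{yarotsky2017error}, and your proposal is essentially a faithful reconstruction of Yarotsky's original argument (partition of unity from products of hat functions, local degree-$(k-1)$ Taylor polynomials, approximate multiplication trees, with $N\sim\eps^{-1/k}$ and the multiplication accuracy shrunk to $\delta\sim\eps(N+1)^{-d}$ so that the inactive patches' spurious $\delta$-contributions stay controlled at only a logarithmic cost). The bookkeeping you flag is handled correctly, so the proposal is sound and matches the approach the paper relies on.
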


\begin{remark}\label{rem:smoothFunctionsApprox}
\begin{itemize}
    \item[(i)] The norm we use for $W^{k,\infty}([0,1]^d)$ is
    \begin{align*}
        \|f\|_{W^{k,\infty}([0,1]^d)} \coloneqq \max_{\alpha: |\alpha|\leq k} \esssup_{x \in [0,1]^d} |D^\alpha f(x)|.
    \end{align*}
    \item[(ii)] The space $W^{k, \infty}([0,1]^d)$ can be identified with the set of $k-1$-times continuously differentiable functions all derivatives of order $k-1$ of which are Lipschitz continuous.
    \item[(iii)] If we consider the ball with radius $R$ in $W^{k, \infty}([0,1]^d)$, i.e.
    \begin{align*}
        F^R_{k,d} \coloneqq \Big\{f \in W^{k,\infty}([0,1]^d) \, \colon \, \|f\|_{W^{k,\infty}([0,1]^d)} \leq R\Big\}
    \end{align*}
    instead of the unit ball $F_{k,d}$ then the constant $c$ from Theorem~\ref{thm:smoothFunctionsApprox} also depends on $R$. However, the asymptotic behaviour with respect to $\eps$ remains unchanged. 
    The same change holds if we consider the space $W^{k,\infty}([0,R]^d)$ instead of $W^{k, \infty}([0,1]^d)$.
\end{itemize}
\end{remark}

\section{Linear Transport Equations}\label{sec:linTransEq}

In this section, we introduce the Cauchy problem for the parametric linear transport equation, state the most important existence results for several types of linear transport equations and provide expressions for their solutions. Here, we mainly follow \cite{GolseFrench}. An English translation of this source can be found in the lecture notes \cite{LectureNotesMeanField}. For more information on linear transport equations see also \cite{Ambrosio2008,Ambrosio2005, Golse2000}.
\begin{definition}
The Cauchy problem of the parametric linear transport equation is given by 
\begin{numcases}{\label{eq:transportequation}}
\partial_t u(t,x,\eta) + V(t,x,\eta)\cdot \nabla_x u(t,x,\eta)=0,\\
u(0,x,\eta)=u_0(x),
\end{numcases}
where $t\in [0,T]$, $x\in \R^n$, and $\eta \in [0,1]^D$ for some $n, D \in \N$, and $T>0$. The vector field $V \in C^k([0,T]\cross \R^n \cross [0,1]^D; \R^n)$ and the initial condition $u_0 \in C^s(\R^n; \R)$ are given with $s, k \in \N$.
\end{definition}

It is well known that linear transport equations can be solved via the method of characteristics \cite{Courant1989, Evans2010,John1978}. 
The idea of this method is to consider characteristic curves that are defined so that the solution $u$ of \eqref{eq:transportequation} is constant along these curves. Then the solution at a point $(t,x,\eta)$ equals the initial data evaluated at the origin of this curve. 

\begin{definition}
The \emph{characteristic curve} of the transport operator $\partial_t + V(t, x, \eta) \cdot \nabla_x$ passing through $x$ at time $s=t$ is given by the set $\{(s,\gamma(s))\ \colon \ s \in [0,T]\}$, where $\gamma$ is the solution of the \emph{characteristic system of ordinary differential equations} 
\begin{numcases}{\label{gammaprime}}
\dot{\gamma}(s)=V(s,\gamma(s),\eta),\\
\gamma(t)=x. 
\end{numcases}
\end{definition}

Let us briefly show why the solution $u$ of \eqref{eq:transportequation} does not change along characteristic curves. Considering the case where $V(t,x,\eta)\equiv v$ for a $v\in \R^n$ and dropping the $\eta$-dependency, we have
\begin{align*}
    \frac{\mathrm{d}}{\dt} u(t,\gamma(t))&= \del_t u(t,\gamma(t)) + \grad_x u(t,\gamma(t))\cdot \dot{\gamma}(t) \nonumber\\
    &= \del_t u(t,\gamma(t)) + \grad_x u(t,\gamma(t))\cdot v\\
    &= (\del_t + V(t,x)\cdot\grad_x)u(t,\gamma(t))=0,\nonumber
\end{align*}
where the last equality is due to \eqref{eq:transportequation}.

To make the method of characteristics work, we have to ensure that the characteristic curves are diffeomorphisms and that there exists a global solution of system $\eqref{gammaprime}$. Therefore, we make the following assumptions on the vector field $V$: Let $n,D \in \N$, and $T>0$.
\begin{itemize}
    \item[\quad (H1)] For some $k \in \N$ there holds $V \in C^k([0,T]\times \R^n\times[0,1]^D; \R^n)$. 
    \item[\quad (H2)] There exists a $C > 0$ s.t.
\begin{align*}
|V(t,x,\eta)| \leq C\, (1+|x|) \quad \text{ for all } (t,x,\eta) \in [0,T]\times\R^n\times [0,1]^D.
\end{align*}
\end{itemize}

The following theorem states that these assumptions lead to global existence of the characteristic curves and characterises their regularity.
\begin{theorem}[{\cite[Theorem 2.2.2]{LectureNotesMeanField}}]\label{thm:ExistenceX}
Let $V$ satisfy (H1) and (H2) with $n,D,k\in \N$, and $T>0$. Then, for all $(t,x,\eta) \in [0,T]\cross \R^n \cross [0,1]^D$, the system of \eqref{gammaprime} has a unique solution $\gamma \in C^{k}([0,T])$.
Furthermore, the map X defined by 
\begin{align*}
       X(s,t,x,\eta) \coloneqq \gamma(s)
\end{align*}
is in $C^{k}([0,T]\cross [0,T]\cross \R^n \cross [0,1]^D)$.
\end{theorem}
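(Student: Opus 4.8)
The plan is to split the argument into two parts: (a) global existence and uniqueness of the solution $\gamma$ of the characteristic ODE \eqref{gammaprime} for each fixed $(t,x,\eta)$, together with $C^k$-regularity in the time variable $s$; and (b) joint $C^k$-regularity of the flow map $X(s,t,x,\eta)=\gamma(s)$ in all four arguments. For part (a), I would first invoke the classical Picard--Lindel\"of theorem: since $V\in C^k\subset C^1$ by (H1), the right-hand side $V(s,\gamma,\eta)$ is locally Lipschitz in $\gamma$, so for each initial datum $\gamma(t)=x$ there is a unique maximal solution on some open subinterval of $[0,T]$. To upgrade this to a \emph{global} solution on all of $[0,T]$, I would use the sublinear growth bound (H2): writing $r(s)\coloneqq|\gamma(s)|$, one gets $\tfrac{d}{ds}r(s)\le|\dot\gamma(s)|=|V(s,\gamma(s),\eta)|\le C(1+r(s))$, and Gr\"onwall's inequality yields $r(s)\le (1+|x|)e^{C|s-t|}-1$, so $\gamma$ stays in a fixed compact set on $[0,T]$ and therefore cannot blow up in finite time; the standard continuation criterion then forces the maximal interval to be all of $[0,T]$. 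Finally, bootstrapping regularity in $s$: $\dot\gamma(s)=V(s,\gamma(s),\eta)$ and the right-hand side is as smooth in $s$ as $V$ is (using that $\gamma$ is already $C^1$, hence $C^2$, etc.), so $\gamma\in C^k([0,T])$ by induction.

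For part (b), the key tool is the classical theorem on smooth dependence of ODE solutions on initial conditions and parameters (see e.g.\ the standard references on ODEs cited in the paper). Here I would treat $(t,x,\eta)$ jointly: the parameter $\eta$ enters the vector field smoothly by (H1), while $t$ and $x$ are the ``initial time'' and ``initial position.'' The cleanest route is to reduce the initial-time dependence to initial-position dependence by a standard trick — either by appending $s$ as an extra autonomous coordinate (set $\tilde\gamma=(\sigma,\gamma)$ with $\dot\sigma=1$, $\sigma(t)=t$), so that varying $t$ becomes varying a component of the initial value, or by noting that $X(s,t,x,\eta)=X(s,0,X(0,t,x,\eta),\eta)$ and composing. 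Either way, the theorem on $C^k$-dependence of flows on initial data and parameters gives that $X$ is $C^k$ jointly in $(s,t,x,\eta)$ on $[0,T]\times[0,T]\times\R^n\times[0,1]^D$, where the global-in-$s$ validity is already guaranteed by the Gr\"onwall bound from part (a), which is moreover \emph{locally uniform} in $(t,x,\eta)$, so no escape of the solution from the relevant compact sets occurs as the parameters vary over compacta.

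The main obstacle, and the only genuinely non-routine point, is handling the dependence on the \emph{initial time} $t$ carefully, since the standard smooth-dependence theorems are usually stated for a fixed initial time and varying initial value and parameters. The autonomous-augmentation trick or the flow-composition identity resolves this, but one must check that the augmented system still satisfies a suitable growth condition (it does: the extra coordinate grows only linearly) and that the compact-set bounds remain locally uniform when $t$ ranges over $[0,T]$. A secondary, purely bookkeeping, point is to make sure the regularity index $k$ is not lost in the bootstrap: each differentiation in $s$ trades one derivative of $\gamma$ for one derivative of $V$, and since $V\in C^k$ one arrives exactly at $\gamma\in C^k$, matching the claimed regularity of $X$.
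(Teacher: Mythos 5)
Your argument is correct and follows essentially the same route as the paper, which simply cites the classical ODE theory: the paper's proof consists of deferring to \cite[Theorem 2.2.2]{LectureNotesMeanField} for existence via Picard--Lindel\"of plus the Gr\"onwall/continuation argument under (H2), and to \cite[Corollary 4.1]{Hartman2002} for the $C^k$-dependence on initial data and parameters. You have merely written out in full the standard results the authors invoke by reference, including the (correct) handling of the initial-time dependence, so there is nothing to add.
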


\begin{proof}
The proof presented in \cite{LectureNotesMeanField} can directly be extended to the parametric case. Moreover, the differentiability with respect to $\eta$ is a standard result, compare \cite[Corollary 4.1]{Hartman2002}.
\end{proof}

For our main result, we want to approximate the map $X$ with a NN. To quantify the complexity of this NN, we need a bound on the $C^k$-norm of $X$ in terms of the given data. We establish this bound in the next proposition.

\begin{proposition}\label{prop:boundX}
 Let $V$ satisfy the conditions (H1) and (H2) with $n,D,k\in \N$ and $T>0$. Then for every $K\subset \R^n$ compact there exists a constant $G = G(k,d,T,|K|,\|V\|_{C^k} ) > 0$ such that 
\begin{align*}
    \|X\|_{C^{k}([0,T]\cross [0,T]\cross K \cross [0,1]^D)} \leq G. 
\end{align*}
\end{proposition}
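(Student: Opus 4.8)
The plan is to derive the bound from a Grönwall-type argument applied to the variational equations satisfied by $X$ and its derivatives with respect to $(t,x,\eta)$. First I would fix a compact $K \subset \R^n$ and note that, by (H2) together with Grönwall's inequality applied directly to $|\gamma(s)| = |X(s,t,x,\eta)|$, the characteristic curves starting in $K$ at any time $t \in [0,T]$ remain in a fixed compact set $K_T \subset \R^n$ whose size depends only on $T$, $|K|$, and the constant $C$ from (H2) (hence on $\|V\|_{C^k}$). This reduces everything to estimates over the compact set $[0,T] \times [0,T] \times K_T \times [0,1]^D$, on which all derivatives of $V$ are bounded by $\|V\|_{C^k}$.

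Next I would set up the equations for the derivatives. Differentiating $\dot X(s) = V(s, X(s), \eta)$ and the initial condition $X(t,t,x,\eta) = x$ in the parameters: the spatial Jacobian $D_x X$ solves a linear ODE in $s$ whose coefficient matrix is $D_x V(s, X(s), \eta)$, with initial data the identity at $s=t$; the $\eta$-derivative $D_\eta X$ solves an inhomogeneous linear ODE with the same homogeneous part and forcing $D_\eta V(s, X(s), \eta)$, initial data $0$ at $s=t$; the $t$-derivative $\partial_t X$ likewise solves a linear ODE, with initial data $-V(t,x,\eta)$ obtained by differentiating $X(t,t,x,\eta)=x$. Higher derivatives of order up to $k$ satisfy analogous linear ODEs in $s$ whose forcing terms are polynomial expressions (Faà di Bruno) in the derivatives of $V$ of order $\le k$ (all bounded on $K_T$) and in the lower-order derivatives of $X$. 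I would then run an induction on the order of differentiation: at each stage Grönwall's inequality turns the linear ODE with bounded coefficients (bounded by $\|V\|_{C^k}$) and a forcing controlled by the already-bounded lower-order derivatives into a bound on the current-order derivative, with constant of the form $G' e^{T\|V\|_{C^k}}$ depending only on $k, n, T, |K|, \|V\|_{C^k}$. Taking the maximum over the finitely many multi-indices of order $\le k$ and over $s \in [0,T]$ gives $G$.

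The main obstacle is purely bookkeeping rather than conceptual: organizing the Faà di Bruno expansion for the $s$-derivative of $D^\alpha\big(V(s,X(s,t,x,\eta),\eta)\big)$ so that one can cleanly see it is a universal polynomial in $\{D^\beta V\}_{|\beta|\le|\alpha|}$ and $\{D^\gamma X\}_{|\gamma|\le|\alpha|}$, and verifying that the initial data for each $D^\alpha X$ at $s=t$ is itself controlled (this is where the mixed $s$–$t$ structure matters: differentiating $X(t,t,x,\eta)=x$ repeatedly in $t$ produces combinations of $\partial_s^j X$ and its spatial derivatives at $s=t$, which must be resolved using the ODE itself). Once the induction hypothesis is correctly stated to include \emph{all} derivatives of total order $\le m$ — in all variables $s,t,x,\eta$ jointly — the Grönwall step at order $m+1$ is routine. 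I would therefore phrase the induction on $m = 0, 1, \dots, k$ with the statement ``$\|X\|_{C^m([0,T]\times[0,T]\times K\times[0,1]^D)} \le G_m$ for a constant $G_m$ depending only on $m,n,T,|K|,\|V\|_{C^k}$'', and conclude with $G = G_k$.
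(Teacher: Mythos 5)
Your proposal is correct and follows essentially the same route as the paper's proof in Appendix A: bound $\|X\|_{C^0}$ via (H2) and Grönwall, then bound derivatives of increasing order by differentiating the characteristic ODE (equivalently its integral form), obtaining linear equations with coefficients controlled by $\|V\|_{C^k}$ on the compact range of the flow, and applying Grönwall inductively up to order $k$. The paper works out orders $0$, $1$, $2$ explicitly and then iterates the pattern, whereas you phrase it as a formal induction with Faà di Bruno bookkeeping, but the underlying argument is the same.
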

\begin{proof}
The proof is presented in Appendix \ref{app:boundX}.
\end{proof}

\subsection{Solutions of the standard linear transport equation}

The next theorem states the existence of a solution for the linear transport equation of \eqref{eq:transportequation}. Furthermore, the theorem establishes that the solution has a compositional structure resulting from composing the initial data with the solution of the characteristic system of ODEs starting at $(t,x,\eta)$ evaluated at $s=0$.
\begin{theorem}[{\cite[Theorem 2.2.4]{LectureNotesMeanField}}]\label{thm:solutionTE}
Let $V$ satisfy the assumptions (H1) and (H2) with $n,D,k\in \N$, and $T>0$. Further, let $u_0 \in C^s(\R^n)$, $s \in \N$. Then the Cauchy problem for the parametric linear transport equation of \eqref{eq:transportequation}
has a unique solution $u \in C^{\min\{s,k\}}([0,T]\times\R^n\cross [0,1]^D)$ which is given by 
\begin{align*}
    u(t,x,\eta)=u_0(X(0,t,x,\eta)).
\end{align*}
\end{theorem}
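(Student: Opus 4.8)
The plan is to prove Theorem~\ref{thm:solutionTE} by verifying directly that $u(t,x,\eta) \coloneqq u_0(X(0,t,x,\eta))$ solves \eqref{eq:transportequation}, establishing its regularity, and then arguing uniqueness via the method of characteristics. The regularity claim is essentially free: by Theorem~\ref{thm:ExistenceX} the flow map $X$ is in $C^k$, and $u_0 \in C^s$, so the composition $u_0 \circ X(0,\cdot,\cdot,\cdot)$ is in $C^{\min\{s,k\}}$ by the chain rule. The initial condition is also immediate, since $X(0,0,x,\eta) = x$: indeed, the characteristic curve $\gamma$ passing through $x$ at time $t=0$ satisfies $\gamma(0)=x$ by the second equation of \eqref{gammaprime}, so $u(0,x,\eta) = u_0(X(0,0,x,\eta)) = u_0(x)$.

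The core of the argument is to show $u$ satisfies the PDE. First I would record the key flow identities. Fix $\eta$ and write $X_s(t,x) \coloneqq X(s,t,x,\eta)$. The defining property is $\partial_s X(s,t,x,\eta) = V(s, X(s,t,x,\eta), \eta)$ with $X(t,t,x,\eta) = x$. The semigroup/reversibility property of flows gives $X(0, t, X(t, 0, y, \eta), \eta) = y$ and more generally $X(r, t, x, \eta)$ depends on $(t,x)$ only through the point reached, so that along a characteristic curve $s \mapsto \gamma(s) = X(s,t,x,\eta)$ the quantity $X(0,s,\gamma(s),\eta)$ is constant in $s$ (its value is the common origin of the curve). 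Differentiating this identity in $s$ at $s=t$ yields the relation
\begin{align*}
\partial_t X(0,t,x,\eta) + \bigl(V(t,x,\eta)\cdot\nabla_x\bigr) X(0,t,x,\eta) = 0,
\end{align*}
where $\partial_t$ differentiates the second slot of $X$ and $\nabla_x$ the third. Granting this, the chain rule gives
\begin{align*}
\partial_t u(t,x,\eta) + V(t,x,\eta)\cdot\nabla_x u(t,x,\eta)
= \nabla u_0\bigl(X(0,t,x,\eta)\bigr) \cdot \Bigl[\partial_t X(0,t,x,\eta) + \bigl(V(t,x,\eta)\cdot\nabla_x\bigr)X(0,t,x,\eta)\Bigr] = 0,
\end{align*}
which is exactly \eqref{eq:transportequation}. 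For uniqueness, if $v$ is any $C^1$ solution, then for fixed $(t,x,\eta)$ the function $s \mapsto v(s, X(s,0,x,\eta), \eta)$ has $s$-derivative $\bigl(\partial_t v + V\cdot\nabla_x v\bigr)\big|_{(s,X(s,0,x,\eta),\eta)} = 0$, hence is constant; evaluating at $s=0$ and $s=t$ and using $X(0,0,x,\eta)=x$ shows $v(t, X(t,0,x,\eta),\eta) = v(0,x,\eta) = u_0(x)$, and since $x \mapsto X(t,0,x,\eta)$ is a diffeomorphism of $\R^n$ (its inverse is $y \mapsto X(0,t,y,\eta)$), this pins down $v$ on all of $[0,T]\times\R^n\times[0,1]^D$ and forces $v = u$.

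The main obstacle is the flow identity $\partial_t X(0,t,x,\eta) + (V(t,x,\eta)\cdot\nabla_x)X(0,t,x,\eta) = 0$, i.e., making precise the ``constant origin along a characteristic'' heuristic in a way that is rigorous about which variables are being differentiated. The cleanest route is to fix a characteristic $\gamma(s) = X(s,t_0,x_0,\eta)$, observe that for every $s$ the curve through $\gamma(s)$ at time $s$ is the same curve, hence $X(0,s,\gamma(s),\eta)$ is independent of $s$, then differentiate this composite in $s$ using $\dot\gamma(s) = V(s,\gamma(s),\eta)$ and finally specialize to $s = t_0$, $\gamma(t_0) = x_0$. One must invoke the $C^k$ (hence $C^1$) regularity of $X$ from Theorem~\ref{thm:ExistenceX} to justify the chain rule, and the uniqueness part of that theorem to justify that the curve-through-$\gamma(s)$-at-time-$s$ coincides with the original curve. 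Since this result is quoted verbatim from \cite[Theorem 2.2.4]{LectureNotesMeanField}, it would also be legitimate to give only the short verification above and refer the reader to that source for the flow-identity lemma; I would include the derivation for completeness but keep it brief.
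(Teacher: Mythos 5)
Your proof is correct and follows the standard method-of-characteristics argument that the paper itself does not spell out but simply cites from \cite[Theorem 2.2.4]{LectureNotesMeanField} (the paper only sketches the heuristic of constancy along characteristics in Section \ref{sec:linTransEq}). The verification of the PDE via the flow identity $\partial_t X(0,t,x,\eta) + (V(t,x,\eta)\cdot\nabla_x)X(0,t,x,\eta)=0$, the regularity by the chain rule, and the uniqueness by transporting any $C^1$ solution along characteristics are exactly the ingredients of the cited proof.
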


For initial conditions that are not differentiable it makes sense to introduce a weak notion of a solution. The following definition and proposition where taken from {\cite{Lions1989}}. A proof for the simplified case, where $\mathrm{div}_x V=0$ can be found in \cite[Theorem 3.12]{LectureNotesTheoryTransport}.
\begin{definition}[{\cite{Lions1989}}]\label{def:weaksol}
Let $V$ satisfy the assumptions (H1) and (H2) with $n,D,k\in \N$, and $T>0$. 
Further, let $u_0 \in L^\infty(\R^n)$. A weak solution to \eqref{eq:transportequation} is a function $u \in L^\infty([0,T]\times\R^n\times[0,1]^D)$ which satisfies the \emph{weak formulation}
\begin{align}
    &\int_0^T \int_{\R^n} u(t,x,\eta) \left[\del_t\varphi + V(t, x, \eta)\cdot \grad_x \varphi(t, x) + \mathrm{div}_x V (t, x, \eta) \varphi(t, x)\right] \dx \dt \nonumber\\
    &\qquad + \int_{\R^n} u_0(x)\varphi(0,x) \dx =0\label{eq:weakForm}
\end{align}
for all $\varphi \in C^1_c([0,T)\times\R^n)$ and all $\eta \in [0,1]^D.$
\end{definition}
As for strong solutions of the transport equation, the solution of the weak formulation is given by a composition of the initial condition with a flow along the characteristic curves. 

\begin{proposition}[{\cite{Lions1989}}]\label{prop:weak}
Let $V$ satisfy the assumptions (H1) and (H2) with $n,D,k\in \N$, and $T>0$. 
Further, let $u_0 \in L^\infty(\R^n)$. Then there exists a global weak solution $u \in  L^\infty([0,T]\times\R^n\times[0,1]^D)$ to \eqref{eq:transportequation} which is given by
\begin{align*}
    u(t,x,\eta)=u_0(X(0,t,x,\eta)).
\end{align*}
\end{proposition}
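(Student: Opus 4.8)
The plan is to reduce the weak formulation to the already-established strong theory by a smoothing (mollification) argument on the initial datum. First I would observe that the candidate $u(t,x,\eta) = u_0(X(0,t,x,\eta))$ is well defined and lies in $L^\infty([0,T]\times\R^n\times[0,1]^D)$: indeed $\|u\|_{L^\infty} \le \|u_0\|_{L^\infty}$ since $X(0,t,\cdot,\eta)$ maps into $\R^n$, and measurability follows from the continuity of $X$ guaranteed by Theorem~\ref{thm:ExistenceX}. So the only real work is verifying the integral identity \eqref{eq:weakForm} for every test function $\varphi \in C^1_c([0,T)\times\R^n)$ and every fixed $\eta$.

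Since $\eta$ is fixed throughout, I would drop it from the notation and work with the non-parametric equation. Pick a standard mollifier sequence $(\rho_\varepsilon)$ on $\R^n$ and set $u_0^\varepsilon \coloneqq u_0 * \rho_\varepsilon \in C^\infty(\R^n)$, with $\|u_0^\varepsilon\|_{L^\infty} \le \|u_0\|_{L^\infty}$ and $u_0^\varepsilon \to u_0$ in $L^1_{\mathrm{loc}}(\R^n)$ (and a.e. along a subsequence). By Theorem~\ref{thm:solutionTE}, $u^\varepsilon(t,x) \coloneqq u_0^\varepsilon(X(0,t,x))$ is the classical $C^{\min\{s,k\}}$ (here at least $C^1$) solution of the transport equation. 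For a classical solution the weak identity \eqref{eq:weakForm} holds: one integrates the PDE $\partial_t u^\varepsilon + V\cdot\nabla_x u^\varepsilon = 0$ against $\varphi$ over $[0,T]\times\R^n$ and integrates by parts, the divergence term $\mathrm{div}_x V\,\varphi$ appearing precisely from moving the derivatives off $u^\varepsilon$ in the term $\int V\cdot\nabla_x u^\varepsilon\,\varphi = -\int u^\varepsilon\,\mathrm{div}_x(V\varphi)$, and the boundary term at $t=0$ producing $\int u_0^\varepsilon(x)\varphi(0,x)\dx$ (the contribution at $t=T$ vanishes since $\varphi(T,\cdot)=0$; spatial boundary terms vanish by compact support of $\varphi$). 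Thus each $u^\varepsilon$ satisfies \eqref{eq:weakForm} with $u_0$ replaced by $u_0^\varepsilon$.

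It remains to pass to the limit $\varepsilon \to 0$ in that identity. The term $\int_{\R^n} u_0^\varepsilon(x)\varphi(0,x)\dx \to \int_{\R^n} u_0(x)\varphi(0,x)\dx$ by $L^1_{\mathrm{loc}}$ convergence on the compact set $\supp\varphi(0,\cdot)$. For the space-time integral I need $u^\varepsilon \to u$ in, say, $L^1_{\mathrm{loc}}([0,T]\times\R^n)$; combined with the uniform $L^\infty$ bound and dominated convergence, this gives convergence of $\int u^\varepsilon[\partial_t\varphi + V\cdot\nabla_x\varphi + \mathrm{div}_x V\,\varphi]$ to the corresponding integral with $u$. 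The convergence $u^\varepsilon \to u$ is where the main technical care lies: we have $u^\varepsilon(t,x) = u_0^\varepsilon(X(0,t,x))$ and $u(t,x) = u_0(X(0,t,x))$, so the issue is that $u_0^\varepsilon \to u_0$ only in $L^1_{\mathrm{loc}}$, not pointwise everywhere, and we are composing with the flow map. The resolution is a change of variables: for fixed $t$, $y \mapsto X(0,t,y)$ is a $C^k$-diffeomorphism of $\R^n$ (its inverse is $x\mapsto X(t,0,x)$, by the group property of the flow; the Jacobian is bounded above and below on compacts, e.g. via Liouville's formula $\det \partial_x X(0,t,x) = \exp(-\int_0^t \mathrm{div}_x V(\tau,X(\tau,t,x))\,\dtau)$ which is locally bounded and bounded away from zero by (H1)). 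Hence $\int_{K} |u_0^\varepsilon(X(0,t,x)) - u_0(X(0,t,x))|\dx \le C_K \int_{X(0,t,K)} |u_0^\varepsilon(y) - u_0(y)|\dy \to 0$, with $X(0,t,K)$ contained in a fixed compact set uniformly in $t\in[0,T]$ by continuity of $X$. Integrating in $t$ and applying dominated convergence (the integrand is bounded by $2\|u_0\|_{L^\infty}|K|$) gives $u^\varepsilon \to u$ in $L^1_{\mathrm{loc}}([0,T]\times\R^n)$, which completes the argument. The main obstacle, then, is precisely this compatibility of the mollification with the pushforward under the flow, handled by the diffeomorphism property and the uniform Jacobian bounds; everything else is routine integration by parts and dominated convergence. (Alternatively, one can cite \cite[Theorem 3.12]{LectureNotesTheoryTransport} for the divergence-free case and \cite{Lions1989} for the general uniqueness/existence theory, as the statement already does.)
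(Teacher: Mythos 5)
Your proposal is correct, but it is worth noting that the paper does not actually prove this proposition: it is stated as imported from \cite{Lions1989}, with a pointer to \cite[Theorem 3.12]{LectureNotesTheoryTransport} for the divergence-free special case. So you have supplied a genuine argument where the paper supplies a citation. Your route --- mollify $u_0$, invoke Theorem~\ref{thm:solutionTE} for the classical solutions $u^\varepsilon = u_0^\varepsilon\circ X(0,\cdot,\cdot)$, verify \eqref{eq:weakForm} for these by integration by parts, and pass to the limit --- is the standard existence argument and is sound. You correctly isolate the one non-routine step, namely that $L^1_{\mathrm{loc}}$ convergence of $u_0^\varepsilon$ survives composition with the flow, and you handle it the right way: $y\mapsto X(0,t,y)$ is a $C^k$-diffeomorphism with inverse $X(t,0,\cdot)$ by the group property, and Liouville's formula gives Jacobian bounds that are uniform for $t\in[0,T]$ and $x$ in a compact set (using (H1) and the a priori bound \eqref{eq:XC0} to confine $X(0,t,K)$ to a fixed compact). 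The same diffeomorphism property is also what makes $u_0\circ X(0,t,\cdot)$ well defined as an $L^\infty$ equivalence class in the first place (preimages of null sets are null), which your opening remark about measurability slightly glosses over but which your later argument implicitly covers. The only caveat is that your argument establishes existence of a weak solution of the stated form; uniqueness in the $L^\infty$ class is the genuinely deep part of \cite{Lions1989} (DiPerna--Lions theory) and is not reproduced here, but the proposition as stated only asserts existence, so this is not a gap.
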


\subsection{Solutions of extensions of the parametric linear transport equation}

In Chapter \ref{chap:DNNApprox}, we extend our main result to linear transport equations that include source terms and are formulated in conservative form. The following two propositions present the corresponding existence results and the form of the solutions for problems with source terms and in conservative form.
\begin{proposition}[{\cite[Theorem 3.9]{LectureNotesTheoryTransport}}]\label{prop:source}
Let $V$ satisfy assumptions (H1) and (H2) with $n,D,k\in \N$, and $T>0$. Further, let $u_0 \in C^s(\R^n)$ and $f\in C^{s'}([0,T]\times\R^n\times[0,1]^D)$, where $s, s' \in \N$. Then the Cauchy problem for the non-homogeneous parametric linear transport equation
\begin{numcases}{\label{eq:sourcetermeq}}
\del_t u(t,x,\eta) + V(t,x,\eta)\cdot \grad_x u(t,x,\eta)=f(t,x,\eta),\\
u(0,x,\eta)=u_0(x),
\end{numcases}
has a unique solution $u \in C^{\min\{s,s',k\}}([0,T]\times\R^n\cross [0,1]^D)$ which is given by
\begin{align}\label{eq:formsolsource}
    u(t,x,\eta)=u_0(X(0,t,x,\eta)) + \int_{0}^t f(s,X(s,t,x,\eta),\eta) \ds.
\end{align}
\end{proposition}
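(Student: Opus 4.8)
The plan is to follow the method of characteristics, as in Theorem~\ref{thm:solutionTE}, but to carry the inhomogeneity along via Duhamel's principle. Write $g(t,x,\eta)$ for the right-hand side of \eqref{eq:formsolsource}. First I would settle the regularity of $g$: by Theorem~\ref{thm:ExistenceX} the flow $X$ lies in $C^k([0,T]\cross[0,T]\cross\R^n\cross[0,1]^D)$, so, since $u_0\in C^s$ and $f\in C^{s'}$, the map $(t,x,\eta)\mapsto u_0(X(0,t,x,\eta))$ is $C^{\min\{s,k\}}$ and $(s,t,x,\eta)\mapsto f(s,X(s,t,x,\eta),\eta)$ is $C^{\min\{s',k\}}$. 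Integrating the latter over $s\in[0,t]$ and invoking differentiation under the integral sign (the integrand is continuous in all variables together with its relevant derivatives, and the upper limit depends smoothly on $t$) gives $g\in C^{\min\{s,s',k\}}([0,T]\cross\R^n\cross[0,1]^D)$.

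Next I would verify that $g$ solves \eqref{eq:sourcetermeq}. The initial condition is immediate, since $X(0,0,x,\eta)=x$ and the integral is empty at $t=0$. For the PDE, the one structural fact needed is that, for each fixed $s\in[0,T]$, the map $(t,x)\mapsto X(s,t,x,\eta)$ solves the homogeneous transport equation, i.e. $\del_t X(s,t,x,\eta)+V(t,x,\eta)\cdot\grad_x X(s,t,x,\eta)=0$. This follows from the flow property $X(s,t',x,\eta)=X(s,t,X(t,t',x,\eta),\eta)$ together with $\frac{\mathrm{d}}{\mathrm{d}h}\big|_{h=0}X(t,t+h,x,\eta)=-V(t,x,\eta)$, the latter obtained by differentiating \eqref{gammaprime} in its initial time and using $X(t,t,x,\eta)=x$. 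Applying this with $s=0$ and the chain rule, the term $u_0(X(0,t,x,\eta))$ contributes nothing to the operator $\del_t+V\cdot\grad_x$. For the Duhamel term, the Leibniz rule gives $\del_t\int_0^t f(s,X(s,t,x,\eta),\eta)\ds = f(t,x,\eta) + \int_0^t \del_t[f(s,X(s,t,x,\eta),\eta)]\ds$, and the surviving integral cancels against $V(t,x,\eta)\cdot\grad_x\int_0^t f(s,X(s,t,x,\eta),\eta)\ds$ by the same homogeneous-transport identity applied under the integral for each fixed $s$. Adding up, $\del_t g + V\cdot\grad_x g = f$.

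For uniqueness, if $u_1,u_2$ are two solutions in $C^{\min\{s,s',k\}}$, then $w\coloneqq u_1-u_2$ solves the homogeneous equation \eqref{eq:transportequation} with vanishing initial data, and the characteristic argument underpinning Theorem~\ref{thm:solutionTE} (a $C^1$ solution of the homogeneous equation is constant along characteristics, hence equal to its value at $t=0$) forces $w\equiv 0$. I expect the main obstacle to be the bookkeeping in the second step — establishing the flow-property identity for $(t,x)\mapsto X(s,t,x,\eta)$ and then assembling the Leibniz-rule pieces without sign errors; everything else is routine given Theorems~\ref{thm:ExistenceX} and~\ref{thm:solutionTE}. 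A cleaner route, which is probably the one I would actually write up, fixes $(t,x,\eta)$ and sets $\phi(r)\coloneqq u(r,X(r,t,x,\eta),\eta)$ for a $C^1$ solution $u$; then $\phi'(r)=f(r,X(r,t,x,\eta),\eta)$ directly from \eqref{eq:sourcetermeq} and $\dot X(r,\cdot)=V$, so integrating from $0$ to $t$ yields \eqref{eq:formsolsource} at once, giving both uniqueness and the stated form, and leaving only the verification (as above) that this formula indeed defines a $C^{\min\{s,s',k\}}$ solution.
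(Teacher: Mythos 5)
Your proposal is correct; note that the paper itself gives no proof of Proposition \ref{prop:source} but simply quotes it from the cited lecture notes, and your argument --- in particular the closing computation with $\phi(r)\coloneqq u(r,X(r,t,x,\eta),\eta)$ and $\phi'(r)=f(r,X(r,t,x,\eta),\eta)$ --- is exactly the standard characteristics/Duhamel proof found there, i.e.\ the inhomogeneous analogue of the along-the-characteristic computation the paper sketches after \eqref{gammaprime}. The existence/regularity verification via the identity $\partial_t X(s,t,x,\eta)+V(t,x,\eta)\cdot\nabla_x X(s,t,x,\eta)=0$ and the Leibniz rule is also sound, so there is nothing to correct.
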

\begin{remark}\label{rem:WeakFormulationSource}
Similar to Proposition \ref{prop:weak}, one can prove the existence and uniqueness of a weak solution with a source term $f\in C^0([0,T]\times \R^n\times [0,1]^D)$. 
In this case, the associated weak formulation is given by \eqref{eq:weakForm} after replacing the right-hand side by $\int_{0}^T \int_{\R^n} f(t,x,\eta)\varphi(t,x,\eta) \dx \dt$. The weak solution $u \in L^\infty([0,T]\times\R^n\times[0,1]^D)$ of that problem is still given by \eqref{eq:formsolsource}. Here one only needs to assume that $u_0$ is continuous, \cite[Remark 3.13]{LectureNotesTheoryTransport} or \cite{Lions1989}.
\end{remark}
\begin{proposition}[{\cite[Theorem 2.3.6]{LectureNotesMeanField}}]\label{prop:cons}
Let $V$ satisfy assumptions (H1) and (H2) with $n,D,k\in \N$, and $T>0$. Further, let $u_0 \in C^s(\R^n)$, $s \in \N$. Then the Cauchy problem for the conservative parametric linear transport equation
\begin{numcases}{\label{eq:conservative}}
\del_t u(t,x,\eta) +\mathrm{div}_x(V(t,x,\eta) u(t,x,\eta))= 0 , \\
u(0,x,\eta)=u_0(x),
\end{numcases}
has a unique solution $u \in C^{\min\{s,k\}}([0,T]\times\R^n\cross [0,1]^D)$ which is given by
\begin{align}\label{eq:formsolcons}
    u(t,x,\eta)=u_0(X(0,t,x,\eta))J(0,t,x,\eta)
\end{align}
with 
\begin{align*}
    J(s,t,x,\eta) = \det(D_x X(s,t,x,\eta)).
\end{align*}
\end{proposition}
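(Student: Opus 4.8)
\noindent\emph{Strategy.} Conceptually, \eqref{eq:conservative} is a continuity equation, and \eqref{eq:formsolcons} says that $u(t,\cdot,\eta)$ is the push-forward of the density $u_0$ under the flow $x\mapsto X(t,0,x,\eta)$; concretely, the plan is to argue along characteristics, exactly as for the homogeneous equation of Theorem~\ref{thm:solutionTE}, except that $u$ is no longer constant along a characteristic but solves a scalar linear ODE. Writing $\mathrm{div}_x(Vu)=V\cdot\grad_x u+(\mathrm{div}_x V)\,u$, the equation reads $\del_t u+V\cdot\grad_x u+(\mathrm{div}_x V)\,u=0$, so for a characteristic $\gamma(s)=X(s,t_0,x_0,\eta)$ and $g(s)\coloneqq u(s,\gamma(s),\eta)$ the chain rule gives $g'(s)=-(\mathrm{div}_x V)(s,\gamma(s),\eta)\,g(s)$. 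Solving this ODE, identifying the resulting exponential factor with a flow Jacobian via Liouville's formula, and using the group property of $X$ will produce \eqref{eq:formsolcons}; uniqueness and existence then follow from, respectively, uniqueness for this ODE and reading the computation backwards.

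\emph{The Jacobian.} First I would differentiate the characteristic system \eqref{gammaprime} with respect to the initial point $x$ to obtain the variational equation $\del_s\big(D_x X(s,t,x,\eta)\big)=(D_x V)\big(s,X(s,t,x,\eta),\eta\big)\,D_x X(s,t,x,\eta)$ with $D_x X(t,t,x,\eta)=\mathrm{Id}_{\R^n}$, which is legitimate since $X\in C^k$ by Theorem~\ref{thm:ExistenceX}. Liouville's formula together with $\mathrm{tr}\,D_x V=\mathrm{div}_x V$ then yields $\del_s J(s,t,x,\eta)=(\mathrm{div}_x V)\big(s,X(s,t,x,\eta),\eta\big)\,J(s,t,x,\eta)$ and $J(t,t,x,\eta)=1$; in particular $J$ is strictly positive and $J(s,t,x,\eta)=\exp\!\big(\int_t^s(\mathrm{div}_x V)(r,X(r,t,x,\eta),\eta)\,\mathrm{d}r\big)$.

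\emph{Assembling the formula.} Fix $(t,x,\eta)$, put $y\coloneqq X(0,t,x,\eta)$, and take $\gamma(s)\coloneqq X(s,0,y,\eta)$, so that the group property of $X$ gives $X(r,0,y,\eta)=X(r,t,x,\eta)$ for all $r$, hence $\gamma(0)=y$ and $\gamma(t)=x$. If $u$ is a classical solution of \eqref{eq:conservative}, then $g(s)=u(s,\gamma(s),\eta)$ solves the ODE above with $g(0)=u_0(y)$, so $g(s)=u_0(y)\exp\!\big(-\int_0^s(\mathrm{div}_x V)(r,\gamma(r),\eta)\,\mathrm{d}r\big)=u_0(y)/J(s,0,y,\eta)$, using $X(r,0,y,\eta)=\gamma(r)$. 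Differentiating the identity $X\big(t,0,X(0,t,x,\eta),\eta\big)=x$ in $x$ and taking determinants gives $J(t,0,y,\eta)\,J(0,t,x,\eta)=1$, and evaluating $g$ at $s=t$ therefore yields $u(t,x,\eta)=g(t)=u_0(y)/J(t,0,y,\eta)=u_0(X(0,t,x,\eta))\,J(0,t,x,\eta)$, i.e. \eqref{eq:formsolcons}. This proves uniqueness; running the chain of equalities in reverse shows that the function defined by \eqref{eq:formsolcons} is a solution, with $u(0,x,\eta)=u_0(x)$ because $X(0,0,x,\eta)=x$ and $J(0,0,x,\eta)=1$, and the regularity $u\in C^{\min\{s,k\}}$ follows from the $C^k$-regularity of $X$ in Theorem~\ref{thm:ExistenceX}.

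\emph{Main obstacle.} I expect the delicate part to be the bookkeeping with the flow — the group property $X(r,s,X(s,t,x,\eta),\eta)=X(r,t,x,\eta)$, differentiating it under the flow, and the determinant identity $J(t,0,y,\eta)\,J(0,t,x,\eta)=1$ — together with confirming that \eqref{eq:formsolcons} genuinely defines a classical rather than merely distributional solution, for which the strict positivity of $J$ (so that the Jacobian factor needs no absolute value) and the joint regularity from Theorem~\ref{thm:ExistenceX} are needed. The remaining ingredients — the linear ODE along each characteristic, the observation that the characteristics emanating from $\{0\}\times\R^n$ sweep out $[0,T]\times\R^n$ for each fixed $\eta$, and the Grönwall-type uniqueness for that ODE — are routine.
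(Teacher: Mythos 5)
The paper does not prove this proposition itself but only cites it from the lecture notes of Golse; your argument --- the chain rule along characteristics reducing the PDE to the scalar linear ODE $g'(s)=-(\mathrm{div}_x V)(s,\gamma(s),\eta)\,g(s)$, Liouville's formula for $J$, and the group property of the flow to convert $1/J(t,0,y,\eta)$ into $J(0,t,x,\eta)$ --- is precisely the standard proof given in that reference, and every step checks out. The one point worth flagging is the regularity claim at the end: since $J=\det(D_xX)$ involves first derivatives of $X\in C^k$, formula \eqref{eq:formsolcons} directly yields only $u\in C^{\min\{s,k-1\}}$ (indeed the paper itself uses $J\in C^{k-1}$ in the proof of Theorem~\ref{thm:resultcons}), so deducing $C^{\min\{s,k\}}$ ``from the $C^k$-regularity of $X$'' glosses over the loss of one derivative in the Jacobian factor.
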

\begin{remark}\label{rem:WeakSolutionConservative}
Again, one can show that there exists a unique weak solution for the conservative formulation which is given by \eqref{eq:formsolcons}. See \cite[Section 2.3]{LectureNotesMeanField} for more information about this problem.
\end{remark}
\begin{remark}
The conservative form \eqref{eq:conservative} simplifies to the original linear transport equation \eqref{eq:transportequation} if $\mathrm{div}_x V=0$.
\end{remark}

\section{DNN Approximation of Solutions of Linear Transport Equations}\label{chap:DNNApprox}

Theorem~\ref{thm:solutionTE} and Propositions \ref{prop:source} and \ref{prop:cons} suggest that the solutions of parametric linear transport equations are of a compositional form, where the initial condition is composed with a flow along characteristic curves. 
Since realisations of NNs are naturally of compositional structure, it is therefore conceivable that the form of the solutions of linear transport equations can be efficiently resolved by NNs. 
Indeed, based on this observation we present, for each of the cases discussed in Section \ref{sec:linTransEq}, an approximation result for the solution of the associated parametric linear transport equations by NNs.

\subsection{Standard linear transport equations}

We start by presenting an approximation result for the solutions of standard linear transport equations as described in Theorem~\ref{thm:solutionTE}. We will assume that the initial condition can be approximated reasonably well by NNs. For this, we use the following definition:

\begin{definition}\label{def:approximability}
Let $n \in \N$ and $r >0$, a function $f \in L^\infty(\R^n)$ is \emph{$r$-approximable by NNs} if, for every compact set $K \subset \R^n$, there exists a constant $c = c(K,r,f) > 0$ such that for every $\epsilon \in (0,1)$ there exists a NN $\Phi^{f, \eps}$ such that 
\begin{itemize}
    \item $L\left(\Phi^{f, \eps}\right) \leq c\cdot \left(\ln(1/\eps) + 1\right)$,
    \item $W\left(\Phi^{f, \eps}\right) \leq c \, \eps^{-1/r}$,
    \item $\left\|f - \Realization(\Phi)\right\|_{L^\infty(K)} \leq \eps$.
\end{itemize}
\end{definition}
\begin{remark}\label{rem:RApproximable}
By Theorem \ref{thm:smoothFunctionsApprox}, every function $f \in C^{s}(\R^n)$ is $r$ approximable for $r = s/n$ .
\end{remark}

We now present the main theorems of this section for the strong and weak formulation of standard linear transport equations below. Afterward, in Remark \ref{rem:WhySuperior}, we discuss to what extent the resulting approximation rates improve upon a direct application of Theorem \ref{thm:smoothFunctionsApprox} to the solution $u$ of a linear transport equation. We present the proofs of the theorems at the end of this subsection.

\begin{theorem}\label{thm:mainresult}
Let $V$ satisfy assumptions (H1) and (H2) for $k, n, D \in \N$, and $T>0$. Further let, for $r >0$, $u_0\in C^1(\R^n)$ be $r$-approximable by NNs. Let $u \in C^{1}([0,T]\times\R^n\cross [0,1]^D)$ denote the unique solution of the Cauchy problem for the parametric linear transport equation
\begin{align*}
\begin{cases}
\del_t u(t,x,\eta) + V(t,x,\eta)\cdot \grad_x u(t,x,\eta) = 0, \\
u(0,x,\eta) = u_0(x).
\end{cases}
\end{align*}
Then, for every $\eps \in (0,1)$ and every compact subset $K \subset \R^n$, there exists a NN 
$\Phi^{\overline{u},\eps}$ with $d$-dimensional input, where $d\coloneqq 1+n+D$, such that for the 
restriction $\overline{u}\coloneqq \restr{u}{[0,T]\cross K \cross [0,1]^D}$ there holds 
that, for $c = c(n,r,d,k,K,T,\|V\|_{C^k}, u_0) > 0$, 
\begin{itemize}
\item[(i)] $L\left(\Phi^{\overline{u},\eps}\right)\leq c\cdot (\ln(1/\eps)+1)$,
\item[(ii)]
$
W(\Phi^{\overline{u},\eps}) \leq c \cdot \left(\eps^{-1/r} + \eps^{-d/k}\right)\cdot (\ln(1/\eps)+1),
$
\item[(iii)]$\left\|\overline{u}-\Realization\left(\Phi^{\overline{u},\eps}\right)\right\|_{L^\infty([0,T]\cross K \cross [0,1]^D)} < \eps$,
\end{itemize}
\end{theorem}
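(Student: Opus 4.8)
The plan is to exploit the explicit representation $u(t,x,\eta)=u_0(X(0,t,x,\eta))$ from Theorem~\ref{thm:solutionTE} and to realise $\Phi^{\overline u,\eps}$ as a sparse concatenation $\Phi^{u_0,\eps''}\odot\Phi^{X,\eps'}$ of two networks: an \emph{inner} NN $\Phi^{X,\eps'}$ approximating the smooth, $d$-dimensional flow map $(t,x,\eta)\mapsto X(0,t,x,\eta)$, and an \emph{outer} NN $\Phi^{u_0,\eps''}$ approximating the low-dimensional, possibly rough initial datum $u_0$. The two contributions to the error — substituting the approximate flow into $u_0$, and approximating $u_0$ itself — are separated by a triangle inequality combined with the local Lipschitz continuity of the $C^1$ function $u_0$.

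\textbf{Step 1: the inner network.} Enclose $[0,T]\cross K\cross[0,1]^D$ in an axis-parallel box and apply an affine change of variables onto $[0,1]^d$; this is realised exactly by a single affine layer. By Proposition~\ref{prop:boundX}, after this change of variables and normalisation by a constant $G$ depending only on $k,d,T,|K|,\|V\|_{C^k}$, each of the $n$ components of $X(0,\cdot,\cdot,\cdot)$ lies in the unit ball of $W^{k,\infty}([0,1]^d)$. Hence Theorem~\ref{thm:smoothFunctionsApprox} together with Remark~\ref{rem:smoothFunctionsApprox} produces, for each $\eps'\in(0,1)$, NNs approximating each component to $L^\infty$-error $\eps'$ with at most $c(\ln(1/\eps')+1)$ layers and $c\,\eps'^{-d/k}(\ln(1/\eps')+1)$ weights, the constants depending only on $k,d,G$. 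Parallelising these via Proposition~\ref{prop:parallelization} and folding the (fixed) affine rescalings into the first and last layers yields a NN $\Phi^{X,\eps'}$ with $\|X(0,\cdot)-\Realization(\Phi^{X,\eps'})\|_{L^\infty([0,T]\cross K\cross[0,1]^D)}\le\sqrt n\,\eps'$ in the Euclidean norm, $L(\Phi^{X,\eps'})\le c_X(\ln(1/\eps')+1)$ and $W(\Phi^{X,\eps'})\le c_X\,\eps'^{-d/k}(\ln(1/\eps')+1)$ for some $c_X=c_X(n,d,k,K,T,\|V\|_{C^k})$.

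\textbf{Step 2: the outer network and the composition.} The image $\Sigma\coloneqq X\big(0,[0,T]\cross K\cross[0,1]^D\big)$ is compact; let $K'$ be the convex hull of its closed $\sqrt n$-neighbourhood, so that $K'$ is compact and, for every $\eps'\le1$, $\Realization(\Phi^{X,\eps'})$ maps $[0,T]\cross K\cross[0,1]^D$ into $K'$. Since $u_0$ is $r$-approximable by NNs (Definition~\ref{def:approximability}), there is $c_0=c_0(K',r,u_0)$ so that for every $\eps''\in(0,1)$ there is a NN $\Phi^{u_0,\eps''}$ with $\|u_0-\Realization(\Phi^{u_0,\eps''})\|_{L^\infty(K')}\le\eps''$, $L(\Phi^{u_0,\eps''})\le c_0(\ln(1/\eps'')+1)$ and $W(\Phi^{u_0,\eps''})\le c_0\,\eps''^{-1/r}$. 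Put $\Phi^{\overline u,\eps}\coloneqq\Phi^{u_0,\eps''}\odot\Phi^{X,\eps'}$ (Proposition~\ref{prop:SparseConc}), noting the dimensions match. Writing $L_0\coloneqq\Lip_{u_0|_{K'}}=\sup_{K'}\|\grad u_0\|<\infty$, on $[0,T]\cross K\cross[0,1]^D$ we estimate
\begin{align*}
 \big|u_0(X(0,\cdot))-\Realization(\Phi^{u_0,\eps''})(\Realization(\Phi^{X,\eps'}))\big|
 &\le\big|u_0(X(0,\cdot))-u_0(\Realization(\Phi^{X,\eps'}))\big|+\big|(u_0-\Realization(\Phi^{u_0,\eps''}))(\Realization(\Phi^{X,\eps'}))\big|\\
 &\le L_0\sqrt n\,\eps'+\eps''.
\end{align*}
Choosing $\eps'\coloneqq\min\{1,\eps/(2L_0\sqrt n)\}$ and $\eps''\coloneqq\eps/2$ makes the last line $<\eps$, which gives (iii).

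\textbf{Step 3: complexity, and the main obstacle.} With these choices one has $\ln(1/\eps')+1\le c(\ln(1/\eps)+1)$ and $\eps'^{-d/k}\le c\,\eps^{-d/k}$ with $c=c(n,L_0)$, and likewise for $\eps''$; hence, by Propositions~\ref{prop:SparseConc} and~\ref{prop:parallelization}, $L(\Phi^{\overline u,\eps})=L(\Phi^{u_0,\eps''})+L(\Phi^{X,\eps'})\le c(\ln(1/\eps)+1)$ and $W(\Phi^{\overline u,\eps})\le2W(\Phi^{u_0,\eps''})+2W(\Phi^{X,\eps'})\le c\big(\eps^{-1/r}+\eps^{-d/k}\big)(\ln(1/\eps)+1)$, where $c$ collects $c_X$, $c_0$, $L_0$ and $n$ and is thus of the asserted form $c=c(n,r,d,k,K,T,\|V\|_{C^k},u_0)$, giving (i) and (ii). The one point requiring care is the nesting of compact sets in Step~2: the inner network's output must land in a \emph{fixed} compact set on which both the approximation error and the Lipschitz constant of $u_0$ are controlled, which is why $K'$ is taken as a fixed neighbourhood of the image of the true flow and $\eps'$ is capped at $1$; everything else is bookkeeping resting on the $C^k$-bound for $X$ supplied by Proposition~\ref{prop:boundX}. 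The weak-formulation variant is obtained verbatim, since Proposition~\ref{prop:weak} furnishes the identical representation $u=u_0\circ X(0,\cdot)$ under an $L^\infty$-hypothesis on $u_0$.
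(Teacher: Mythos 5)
Your proposal is correct and follows essentially the same route as the paper's proof: represent $u=u_0\circ X(0,\cdot,\cdot,\cdot)$ via Theorem~\ref{thm:solutionTE}, approximate the smooth flow map by Theorem~\ref{thm:smoothFunctionsApprox} (with the $C^k$-bound of Proposition~\ref{prop:boundX}) and $u_0$ by $r$-approximability, compose via Proposition~\ref{prop:SparseConc}, and split the error with the triangle inequality and the Lipschitz constant of $u_0$. If anything, your Step~2 is slightly more careful than the paper, which restricts $u_0$ to the ball $B_G(0)$ without explicitly enlarging it to accommodate the perturbed outputs of the approximate flow; your fixed neighbourhood $K'$ and the cap $\eps'\le 1$ close that small gap.
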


In Theorem \ref{thm:mainresult} above, the initial condition is required to be continuously differentiable. In the following result, we extend Theorem \ref{thm:mainresult} to initial conditions that are Lipschitz continuous only. To handle initial conditions that are not continuously differentiable, we have to consider weak solutions as described in Definition $\ref{def:weaksol}$.

\begin{theorem}\label{thm:mainWeak}
Let $V$ satisfy assumptions (H1) and (H2) for $k, n, D \in \N$, and $T>0$. Further let, for $r >0$, $u_0\in W^{1, \infty}_{\loc}$ be $r$-approximable by NNs. Let $u(t,x,\eta)=u_0(X(0,t,x,\eta))$ denote the weak solution of the Cauchy problem for the parametric linear transport equation of \eqref{eq:transportequation} according to Proposition \ref{prop:weak}.

Then, for every $\eps \in (0,1)$ and every compact subset $K \subset \R^n$, there exists a NN $\Phi^{\overline{u},\eps}$ with $d$-dimensional input, where $d \coloneqq 1+n+D$, such that for the restriction $\overline{u}\coloneqq \restr{u}{[0,T]\cross K \cross [0,1]^D}$ there holds that 
\begin{itemize}
\item[(i)] $L\left(\Phi^{\overline{u},\eps}\right)\leq c \cdot \left(\ln(1/\eps)+1\right)$, 
\item[(ii)]$W\left(\Phi^{\overline{u},\eps}\right) \leq c \cdot \left(\eps^{-1/r} + \eps^{-d/k}\right)\cdot \left(\ln(1/\eps)+1\right)$,
\item[(iii)] $\left\|\overline{u}-\Realization\left(\Phi^{\overline{u},\eps}\right)\right\|_{L^\infty([0,T]\cross K \cross [0,1]^D)}< \eps$,
\end{itemize}
for $c = c(n,r,d,k,|K|,T,\|V\|_{C^k}, u_0) > 0$.
\end{theorem}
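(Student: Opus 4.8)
The plan is to use the representation $u(t,x,\eta)=u_0(X(0,t,x,\eta))$ from Proposition~\ref{prop:weak} and to build $\Phi^{\overline u,\eps}$ as a sparse concatenation of a network approximating the smooth flow map $z\mapsto X(0,z)$ with a network approximating the merely Lipschitz initial datum $u_0$. This mirrors the strategy for Theorem~\ref{thm:mainresult}; the only structural changes are that the mean value estimate available for a $C^1$ datum is replaced by a Lipschitz estimate, which is exactly what $u_0\in W^{1,\infty}_{\loc}$ supplies, and that the solution formula is now justified for nonsmooth data via Proposition~\ref{prop:weak} rather than Theorem~\ref{thm:solutionTE}.

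\emph{Construction of the two sub-networks.} Choose a closed box $Q\subset\R^n$ with $K\subseteq Q$. By Theorem~\ref{thm:ExistenceX} the map $g(t,x,\eta)\coloneqq X(0,t,x,\eta)$ lies in $C^k([0,T]\times Q\times[0,1]^D;\R^n)$, and by Proposition~\ref{prop:boundX} its $C^k$-norm is bounded by a constant depending only on $k,d,T,|Q|,\|V\|_{C^k}$. Since the domain is a box, an affine change of variables (absorbed into the first layer, at the cost of constants only) lets me apply Theorem~\ref{thm:smoothFunctionsApprox} to each of the $n$ components and parallelise them via Proposition~\ref{prop:parallelization}: for each $\delta\in(0,1)$ I obtain $\Phi^{g,\delta}$ with $L(\Phi^{g,\delta})\le c(\ln(1/\delta)+1)$, $W(\Phi^{g,\delta})\le c\,\delta^{-d/k}(\ln(1/\delta)+1)$ and $\|g-\Realization(\Phi^{g,\delta})\|_{L^\infty}<\delta$ (the $\sqrt n$ from combining components being absorbed into $c$). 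Now $K^\ast\coloneqq g([0,T]\times K\times[0,1]^D)$ is compact, hence $\widetilde K\coloneqq\{y\in\R^n:\operatorname{dist}(y,K^\ast)\le 1\}$ is compact, and for $\delta<1$ every $z\in[0,T]\times K\times[0,1]^D$ has $\Realization(\Phi^{g,\delta})(z)\in\widetilde K$. Let $L_0$ be the (finite) Lipschitz constant of $u_0$ on $\widetilde K$, available because $u_0\in W^{1,\infty}_{\loc}$. Applying $r$-approximability of $u_0$ to the compact set $\widetilde K$ gives, for each $\delta'\in(0,1)$, a network $\Phi^{u_0,\delta'}$ with $L(\Phi^{u_0,\delta'})\le c(\ln(1/\delta')+1)$, $W(\Phi^{u_0,\delta'})\le c(\delta')^{-1/r}$ and $\|u_0-\Realization(\Phi^{u_0,\delta'})\|_{L^\infty(\widetilde K)}\le\delta'$.

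\emph{Composition and error estimate.} Put $\Phi^{\overline u,\eps}\coloneqq\Phi^{u_0,\delta'}\odot\Phi^{g,\delta}$ with $\delta\coloneqq\eps/(2(L_0+1))$ and $\delta'\coloneqq\eps/2$. For $z\in[0,T]\times K\times[0,1]^D$ and $y\coloneqq\Realization(\Phi^{g,\delta})(z)\in\widetilde K$,
\begin{align*}
|u(z)-\Realization(\Phi^{\overline u,\eps})(z)|
&\le|u_0(g(z))-u_0(y)|+|u_0(y)-\Realization(\Phi^{u_0,\delta'})(y)|\\
&\le L_0\|g(z)-y\|+\delta'\le L_0\delta+\delta'<\eps,
\end{align*}
which is (iii). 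Proposition~\ref{prop:SparseConc} then gives $L(\Phi^{\overline u,\eps})=L(\Phi^{u_0,\delta'})+L(\Phi^{g,\delta})\le c(\ln(1/\eps)+1)$ and $W(\Phi^{\overline u,\eps})\le 2W(\Phi^{u_0,\delta'})+2W(\Phi^{g,\delta})\le c(\eps^{-1/r}+\eps^{-d/k})(\ln(1/\eps)+1)$; replacing $\delta,\delta'$ by fixed multiples of $\eps$ alters only the constants, not the exponents, and all constants can be traced to depend only on $n,r,d,k,|K|,T,\|V\|_{C^k},u_0$.

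\emph{Main obstacle.} The only delicate point is the bookkeeping in the middle paragraph: one must pin down a single compact set $\widetilde K$ that simultaneously contains the range of the inner network, makes $u_0$ Lipschitz, and is the set on which $r$-approximability is invoked, and then verify that every constant introduced along the way (the bound $G$ from Proposition~\ref{prop:boundX}, the affine rescaling of $Q$, the Lipschitz constant $L_0$, and the $\eps\!\to\!\delta$ conversion) really depends only on the quantities allowed in the statement. Everything else is the routine concatenation/parallelisation calculus of Section~\ref{sec:neuralNetworks} combined with Theorem~\ref{thm:smoothFunctionsApprox} and Definition~\ref{def:approximability}, so the argument is essentially a transcription of the proof of Theorem~\ref{thm:mainresult} with the $C^1$ estimate on $u_0$ replaced by its local Lipschitz bound.
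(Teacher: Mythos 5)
Your proposal is correct and follows essentially the same route as the paper, which proves Theorem~\ref{thm:mainWeak} by declaring it ``completely analogous'' to the proof of Theorem~\ref{thm:mainresult}: approximate the smooth flow $X(0,\cdot,\cdot,\cdot)$ via Theorem~\ref{thm:smoothFunctionsApprox}, approximate $u_0$ via its $r$-approximability, and sparsely concatenate. Your explicit bookkeeping of the compact set $\widetilde K$ on which the local Lipschitz constant of $u_0\in W^{1,\infty}_{\loc}$ is taken (and on which $r$-approximability is invoked) is exactly the detail the paper leaves implicit, so no gap remains.
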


\begin{remark} \label{rem:WhySuperior}
The typical framework in which we expect to apply Theorems \ref{thm:mainresult} and \ref{thm:mainWeak} above is that where $V$ is substantially smoother than the initial condition $u_0$. Concretely, in the following two situations we have that the approximation rates resulting from an application of Theorems \ref{thm:mainresult} or \ref{thm:mainWeak} are significantly better than those resulting from a direct approximation of $u$ by Theorem~\ref{thm:smoothFunctionsApprox}.
\begin{itemize}
    \item Assume that, in the notation of Theorems \ref{thm:mainresult} and \ref{thm:mainWeak}, $n \leq s \ll d \leq k$, and $u_0 \in C^s$. In this situation, the dimension of the parameter space is significantly larger than the dimension of the physical domain. The dependence of $V$ on the parameters is, however, very regular.
    
    Then $u \in C^s([0,T]\cross K \cross [0,1]^D)$ and a direct application of Theorem~\ref{thm:smoothFunctionsApprox} would yield an approximating network with a complexity bound for the number of weights of the form $c\cdot ((\ln(1/\eps)+1)\eps^{-d/s})$. On the other hand, Theorem~\ref{thm:smoothFunctionsApprox} yields that $u_0$ is $1$-approximable by NNs and hence Theorem \ref{thm:mainresult} yields a complexity bound that is not worse than $c\cdot ((\ln(1/\eps)+1) \epsilon^{-1}$.
    
    \item Assume that $D \in \N$ and in the notation of Theorems \ref{thm:mainresult} and \ref{thm:mainWeak} $n \ll n + D$, and $k = d$. Moreover, assume that $u_0 \not \in W^{2, \infty}_{\loc}$, but $u_0 \in W^{1, \infty}_{\loc}$ and $u_0$ can be very efficiently represented by the realisation of a NN. A typical example is that $u_0$ is a ramp function along a hyperplane or a piecewise affine function. In this case, $u_0$ is $r$-approximable for every $r \in \R$. 
    
    Then we have that $u \not \in W^{2, \infty}([0,T]\cross K \cross [0,1]^D)$ and therefore a direct application of Theorem~\ref{thm:smoothFunctionsApprox} to approximate $u$ would again yield a NN with a complexity bound for the number of weights of the form $c\cdot ((\ln(1/\eps)+1)\eps^{-d})$. On the other hand, Theorem \ref{thm:mainWeak} yields 
    a complexity bound of $c\cdot ((\ln(1/\eps)+1)\eps^{-1})$.
\end{itemize}
\end{remark}

\begin{proof}[Proof of Theorem \ref{thm:mainresult}]
Recall that, by Theorem~\ref{thm:solutionTE}, the unique solution $u \in C^{1}([0,T]\times\R^n\cross [0,1]^D)$ of \eqref{eq:transportequation} is given by 
\begin{align*}
    u(t,x,\eta) = u_0(X(0,t,x,\eta)). 
\end{align*}
Moreover, $X \in C^k([0,T]\times[0,T]\times\R^n\times[0,1]^D)$ by Theorem~\ref{thm:ExistenceX}. Therefore, $\widetilde{X} \coloneqq X(0, \cdot, \cdot, \cdot) \in C^k([0,T]\times\R^n\times[0,1]^D)$.

The idea of the proof is to first approximate the functions $u_0$ and $\widetilde{X}$ separately by realisations of NNs using Theorem~\ref{thm:smoothFunctionsApprox} and then to concatenate these NNs by Proposition~\ref{prop:SparseConc}. 
To apply Theorem~\ref{thm:smoothFunctionsApprox}, we restrict the function $\widetilde{X}$ to $U \coloneqq [0,T] \cross K \cross [0,1]^D$ and $u_0$ to $B_G(0)$ where $B_G(0)\subset \R^n$ denotes the ball of radius $G$ around $0$ with $G = (|K|+CT) \exp(CT)$ from  \eqref{eq:XC0}.
Then Definition \ref{def:approximability} and Theorem~\ref{thm:smoothFunctionsApprox} imply that there exist NNs $\Phi^{u_0,\delta_1}$ and $\Phi^{\widetilde{X},\delta_2}$ with $n$ and $d$ dimensional input dimension, respectively, such that for $\delta_1\coloneqq \eps /2$ and $\delta_2\coloneqq \eps /(2 \, \mathrm{Lip}_{u_0})$ there holds
\begin{align*}
    \left\|u_0-\Realization\left(\Phi^{u_0,\delta_1}\right)\right\|_{L^\infty(B_G(0))} < \delta_1 \quad\quad  \text{ and } \quad\quad \left\|\widetilde{X}-\Realization\left(\Phi^{\widetilde{X},\delta_2}\right)\right\|_{L^\infty(U)} < \delta_2.
\end{align*}
Invoking the triangle inequality, we conclude for the concatenated network $\Phi^{u,\eps} \coloneqq \Phi^{u_0,\delta_1} \odot \Phi^{\widetilde{X}, \delta_2}$ that 
\begin{align*}
    \left\|u-\Realization\left(\Phi^{u,\eps}\right)\right\|_{L^\infty(U)} &= \left\|u_0 \circ \widetilde{X} - \Realization\left(\Phi^{u_0,\delta_1} \odot \Phi^{\widetilde{X}, \delta_2}\right)\right\|_{L^\infty(U)}\\
    &= \left\|u_0 \circ \widetilde{X} - \Realization\left(\Phi^{u_0,\delta_1}\right) \circ \Realization\left(\Phi^{\widetilde{X}, \delta_2}\right)\right\|_{L^\infty(U)} \\
    &\leq\left\|u_0 \circ \widetilde{X} - u_0 \circ \Realization\left(\Phi^{\widetilde{X},\delta_2}\right)\right\|_{L^\infty(U)} \nonumber \\
    &\qquad + \left\| u_0 \circ \Realization\left(\Phi^{\widetilde{X},\delta_2}\right) - \Realization\left(\Phi^{u_0,\delta_1}\right) \circ \Realization\left(\Phi^{\widetilde{X}, \delta_2}\right)\right\|_{L^\infty(U)}\\
    &\leq \mathrm{Lip}_{u_0} \left\|\widetilde{X}-\Realization\left(\Phi^{\widetilde{X},\delta_2}\right)\right\|_{L^\infty(U)} + \left\|u_0 - \Realization\left(\Phi^{u_0,\delta_1}\right)\right\|_{L^\infty(B_G(0))} \\
    &\leq \frac{\eps}{2} + \frac{\eps}{2} = \eps.
\end{align*}
Additionally, we compute the number of weights of $\Phi^{u,\eps}$ using Proposition \ref{prop:SparseConc}, Definition \ref{def:approximability}, Theorem~\ref{thm:smoothFunctionsApprox}, Remark \ref{rem:smoothFunctionsApprox}, and Proposition \ref{prop:boundX} as 
\begin{align*}
    W\left(\Phi^{u,\eps}\right) \leq 2 \cdot \left(W\left(\Phi^{u_0,\delta_1}\right) + W\left(\Phi^{X,\delta_2}\right)\right) \leq c \cdot \left(\eps^{-1/r} + \eps^{-d/k}\right) \cdot \left(\ln\left(1/\eps\right)+1\right)
\end{align*}
with $c = c(n,r,d,k,|K|,T,\|V\|_{C^k},u_0) > 0$. Moreover, by Proposition \ref{prop:SparseConc}, Definition \ref{def:approximability}, Theorem~\ref{thm:smoothFunctionsApprox}, Remark \ref{rem:smoothFunctionsApprox}, and Proposition \ref{prop:boundX}, we have that 
$L(\Phi^{u,\eps}) \leq c\cdot (\ln(1/\eps)+1)$.
\end{proof}

\begin{proof}[Proof of Theorem \ref{thm:mainWeak}]
The proof is completely analogous to the proof of Theorem~\ref{thm:mainresult} since the form of the solution did not change and our estimates only required $u_0$ to be Lipschitz continuous, but not that $u_0$ was in $C^1$.
\end{proof}

\subsection{Non-vanishing source term}\label{sec:source}

In the following, we extend Theorem \ref{thm:mainresult} to non-vanishing source terms. We state our results for two different types of source terms. For $V$ satisfying assumptions (H1) and (H2) with $k, n, D \in \N$, and $T>0$, we assume that one of the following properties holds:
\mathleft
\begin{align}
    \noindent &\text{   (i) } f(t,x,\eta)=f_1(t,x) \in C^{s'}([0,T]\cross\R^n),\, \quad \text{ for } s' \coloneqq \lceil (n+1) k/d \rceil\label{eq:fLow},\\
    &\text{   (ii) } f(t,x,\eta)=f_2(t,x,\eta) \in C^{s'}([0,T]\cross\R^n\cross[0,1]^D), \quad \text{ for } s' \coloneqq k. \label{eq:fHigh}
\end{align}
In words, we assume high regularity of $f$ if it depends on $\eta$ while much less regularity of $f$ is sufficient for the $\eta$-independent case.
\begin{remark}\label{rem:ApproximationOfSourceTerm}
In both cases, \eqref{eq:fLow} and \eqref{eq:fHigh}, Theorem \ref{thm:smoothFunctionsApprox} demonstrates that for every $\eps \in (0,1)$ there exists a NN $\Phi^{f, \eps}$ such that 
\begin{itemize}
    \item $L(\Phi^{f,\eps})\leq c\cdot (\ln(1/\eps)+1)$,
    \item $W(\Phi^{f,\eps})\leq c\,\eps^{-d/k}\cdot (\ln(1/\eps)+1)$,
    \item $\|f-\Realization(\Phi^{f,\eps})\|_{L^\infty} < \eps$.
\end{itemize}
\end{remark}
Based on the assumption on the source term, we next present an approximation result for solutions of non-homogeneous parametric linear transport equations. 
\mathcenter

\begin{theorem}\label{thm:resultsourceterm}
Let $V$ satisfy assumptions (H1) and (H2) for $k, n, D \in \N$, and $T>0$. Further let, for $r >0$, $u_0\in C^1(\R^n)$ be $r$-approximable by NNs and let $f$ and $s'$ be as in \eqref{eq:fLow} or \eqref{eq:fHigh}. Let $u \in C^{1}([0,T]\times\R^n\cross [0,1]^D)$
denote the unique solution of the Cauchy problem for the non-homogeneous parametric linear transport equation
\begin{align*}
\begin{cases}
\del_t u(t,x,\eta) +  V(t,x,\eta) \cdot \grad_x u(t,x,\eta) = f(t,x,\eta), \\
u(0,x,\eta)=u_0(x).
\end{cases}
\end{align*}
Then, for every $\eps \in (0,1)$ and every compact subset $K \subset \R^n$, there exists a NN $\Phi^{\overline{u},\eps}$ with $d$-dimensional input, where $d\coloneqq 1+n+D$, such that for the restriction $\overline{u} \coloneqq \restr{u}{[0,T]\cross K \cross [0,1]^D}$ there holds that
\begin{itemize}
\item[(i)] $L\left(\Phi^{\overline{u},\eps}\right)\leq c\cdot (\ln(1/\eps)+1)$, 
\item[(ii)] $W\left(\Phi^{\overline{u},\eps}\right)\! \leq \!c \cdot \left(\eps^{-1/r} + \eps^{-(d+1)/k-1}\right) \cdot \left(\ln(1/\eps)+1\right)$,
\item[(iii)]
$\left\|\overline{u}-\Realization\left(\Phi^{\overline{u},\eps}\right)\right\|_{L^\infty([0,T]\cross K \cross [0,1]^D)} < \eps$,
\end{itemize}
for $c=c(n,r,d,k,|K|,T,\|V\|_{C^k}, u_0, \|f\|_{C^{s'}}) > 0$.
\end{theorem}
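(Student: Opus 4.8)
The plan is to mirror the proof of Theorem~\ref{thm:mainresult} but now to account for the additional integral term in the solution formula \eqref{eq:formsolsource}, namely
\[
u(t,x,\eta)=u_0(X(0,t,x,\eta)) + \int_{0}^t f(s,X(s,t,x,\eta),\eta)\ds.
\]
The first summand is handled exactly as in Theorem~\ref{thm:mainresult}: approximate $\widetilde X \coloneqq X(0,\cdot,\cdot,\cdot)$ on $U\coloneqq[0,T]\cross K\cross[0,1]^D$ and $u_0$ on a suitable ball $B_G(0)$ by realisations of NNs via Theorem~\ref{thm:smoothFunctionsApprox} and Proposition~\ref{prop:boundX}, and sparsely concatenate them using Proposition~\ref{prop:SparseConc}; this contributes the $\eps^{-1/r}$ term to the weight bound. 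The whole work is in emulating the integral term.

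For the integral term, I would first reduce it to something that can be approximated by a smooth-function NN. The map $(s,t,x,\eta)\mapsto f(s,X(s,t,x,\eta),\eta)$ is, by Theorem~\ref{thm:ExistenceX}, Proposition~\ref{prop:boundX}, and the chain rule, a $C^{\min\{s',k\}}$ function on $[0,T]\cross U$ with a $C^{\min\{s',k\}}$-norm bounded in terms of the given data. The plan is to discretise the $s$-integral by a composite quadrature rule (e.g.\ the trapezoidal or a higher-order Newton–Cotes rule) on a uniform grid $0=s_0<s_1<\dots<s_N=t$ with $N\sim \eps^{-1/(q)}$ nodes for an appropriate quadrature order $q$; the quadrature error is controlled by the smoothness of $f\circ X$. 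Since the upper limit of the integral is the variable $t$, I would handle this either by a change of variables $s=t\sigma$, $\sigma\in[0,1]$, turning the integral into $t\int_0^1 f(t\sigma,X(t\sigma,t,x,\eta),\eta)\,\mathrm d\sigma$ with fixed limits (at the cost of one extra multiplication by $t$, handled by Proposition~\ref{prop:multiplication}), or by a standard "masking" trick. Each quadrature node value $f(t\sigma_i,X(t\sigma_i,t,x,\eta),\eta)$ is the realisation of a NN obtained by concatenating a smooth-function approximation of $f$ with a smooth-function approximation of the $C^{\min\{s',k\}}$ map $(t,x,\eta)\mapsto X(t\sigma_i,t,x,\eta)$; these are then summed (with quadrature weights, which only rescales the last layer) using Proposition~\ref{def:addofNN} and parallelised via Proposition~\ref{prop:parallelization}. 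The choices $s'=\lceil(n+1)k/d\rceil$ in case~\eqref{eq:fLow} and $s'=k$ in case~\eqref{eq:fHigh} are exactly what is needed: in the $\eta$-independent case the relevant composed functions have a lower-dimensional domain ($n+1$ instead of $d$ variables for the inner argument of $f_1$), so a smaller $s'$ already gives error $\eps^{-(n+1)/s'}=\eps^{-d/k}$ per node; in the $\eta$-dependent case one needs the full $s'=k$ to get $\eps^{-d/k}$.

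Tallying the costs: to reach total error $\eps$ I split it as, say, $\eps/3$ for the $u_0\circ\widetilde X$ part, $\eps/3$ for the quadrature (truncation) error — forcing $N\sim\eps^{-1/q}$ with $q$ the quadrature order, which can be taken so that $N\lesssim\eps^{-1/k}$ using the $C^k$ smoothness available — and $\eps/3$ split evenly among the $N$ node evaluations so each node NN has error $\eps/(3N)$ and hence weight $\mathcal O((\eps/N)^{-d/k}(\ln(1/\eps)+1))=\mathcal O(\eps^{-d/k-1/k}(\ln(1/\eps)+1))$ up to logs; multiplying by $N\sim\eps^{-1/k}$ node networks put in parallel gives $\mathcal O(\eps^{-d/k-2/k}\cdots)$, and one checks this is dominated by the claimed $\eps^{-(d+1)/k-1}$ term (the extra $+1$ in the exponent absorbs the number of quadrature nodes generously, with room to spare). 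The depth stays $\mathcal O(\ln(1/\eps)+1)$ because concatenation adds depths, parallelisation takes a max, and the multiplication gadget of Proposition~\ref{prop:multiplication} adds only $\mathcal O(\ln(1/\eps))$ layers. Finally one combines the two pieces with Proposition~\ref{def:addofNN} and collects constants, which depend on $n,r,d,k,|K|,T,\|V\|_{C^k},u_0,\|f\|_{C^{s'}}$ as claimed.

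The main obstacle I anticipate is the bookkeeping around the variable integration limit $t$ together with keeping the per-node accuracy and node count balanced so that the final weight bound lands at exactly $\eps^{-(d+1)/k-1}$ rather than something larger; in particular one must verify that bounding $\|X(t\sigma_i,t,\cdot,\cdot)\|_{C^{s'}}$ uniformly in $\sigma_i$ follows from Proposition~\ref{prop:boundX}, and that the Lipschitz constant of $f$ in its middle argument (needed for the concatenation error estimate analogous to the triangle-inequality computation in the proof of Theorem~\ref{thm:mainresult}) is controlled by $\|f\|_{C^{s'}}$. Everything else is a routine combination of the NN calculus in Propositions~\ref{prop:SparseConc}–\ref{prop:multiplication} with Theorem~\ref{thm:smoothFunctionsApprox}.
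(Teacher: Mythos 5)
Your overall architecture coincides with the paper's: split $u$ into $u_0\circ\widetilde X$ (handled exactly as in Theorem \ref{thm:mainresult}) plus the integral term, and emulate the latter by a NN-realised quadrature with variable upper limit (the paper uses the left Riemann sum together with the masking construction of Proposition \ref{prop:ApproxOfTrapez}). The genuine gap is in your error budget for the quadrature nodes. You allocate accuracy $\eps/(3N)$ to each of the $N$ node networks, as if the node-approximation errors accumulated additively. They do not: the quadrature sum has the form $\frac1N\sum_{i} g(t_i,\cdot)$ with at most $N$ terms, so replacing the integrand $g=f^X$ by an approximation that is \emph{uniformly} $\delta$-accurate perturbs the sum by at most $\delta$ (the quadrature weights $1/N$ sum to at most $1$); per-node accuracy of order $\eps$ therefore suffices, and one only pays the factor $N$ in the network \emph{size}, not in the required accuracy. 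This is not cosmetic: with your splitting each node network has weight $\mathcal O\bigl((\eps/N)^{-d/k}\bigr)$, and multiplying by $N$ nodes gives $N^{1+d/k}\eps^{-d/k}$; with your choice $N\sim\eps^{-1/k}$ this is $\eps^{-d/k-d/k^2-1/k}$, which exceeds the claimed $\eps^{-(d+1)/k-1}$ whenever $d>k^2$ --- and $d=1+n+D$ is precisely the quantity we allow to be large. Your parenthetical claim that the extra $+1$ in the exponent "absorbs the number of quadrature nodes generously, with room to spare" is therefore false in the relevant regime.

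A second, smaller issue: in case \eqref{eq:fLow} the integrand $\tau\mapsto f^X(\tau,t,x,\eta)$ is only $C^{\min\{s',k\}}$ with $s'=\lceil(n+1)k/d\rceil$ possibly equal to $1$, so a quadrature of order $q$ with $N\lesssim\eps^{-1/k}$ is not always available. The paper sidesteps both problems by using the first-order left Riemann sum with $N\sim\eps^{-1}$ (Proposition \ref{prop:trapez}), which only needs $f^X\in C^1$, together with the non-accumulating error accounting above; this yields $W\lesssim N\cdot W\bigl(\Phi^{f}\odot\Phi^{X,\mathrm{full}}\bigr)\lesssim\eps^{-1}\cdot\eps^{-(d+1)/k}$, exactly the claimed bound. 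Once you correct the per-node accuracy to $\eps$, your change-of-variables variant with $N\sim\eps^{-1}$ also closes (giving $N\cdot\eps^{-d/k}\leq\eps^{-(d+1)/k-1}$); the remaining ingredients you list --- masking versus substitution, the role of $s'$ in the two cases, the uniform $C^k$-bound on $X$ from Proposition \ref{prop:boundX}, and the Lipschitz estimate for $f$ --- are all handled correctly.
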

\begin{remark}
\begin{itemize}
    \item If $u_0 \in C^{s}(\R^n)$, then Remark \ref{rem:RApproximable} demonstrates that we can replace $r$ in Theorem \ref{thm:resultsourceterm} by $s/n$ and the constant $c$ in Theorem \ref{thm:resultsourceterm} depends more specifically on $\|u_0\|_{C^s}$.
    \item As for Theorem \ref{thm:mainresult}, one can immediately generalise Theorem \ref{thm:resultsourceterm} to represent solutions of weak formulations via Remark \ref{rem:WeakFormulationSource}. 
\end{itemize}
\end{remark}
\begin{proof}[Proof of Theorem \ref{thm:resultsourceterm}]
Recall that, by Proposition \ref{prop:source}, the unique solution $u \in C^{1}([0,T]\times\R^n\cross [0,1]^D)$ of \eqref{eq:sourcetermeq} is given by 
\begin{align}\label{eq:OurTarget}
    u(t,x,\eta) = u_0(X(0,t,x,\eta)) + \int_0^t f(\tau,X(\tau,t,x,\eta),\eta) \dtau,
\end{align}
for all $(t,x,\eta) \in [0,T]\times\R^n\cross [0,1]^D$.

The proof, therefore, proceeds as follows: First, via Proposition \ref{prop:ApproxOfTrapez}, we construct a NN the realisation of which approximates the antiderivative of $f$ with respect to the first coordinate by a NN. Then we construct a second NN via Theorem \ref{thm:mainresult} the realisation of which approximates $u_0\circ X$. Finally, Proposition \ref{def:addofNN} yields a NN such that the associated realisation approximates \eqref{eq:OurTarget}.

Concretely, let $G = G(k,d,T,|K|,\|V\|_{C^k} ) > 0$ be as in 
Proposition \ref{prop:boundX}. Definition \ref{def:approximability} and
Theorem~\ref{thm:smoothFunctionsApprox} imply that there exist NNs 
$\Phi^{u_0,\delta_1}$, $\Phi^{X,\delta_2}$ and $\Phi^{f,\delta_3}$ with $n$, $d+1$, and $d$-dimensional input respectively such that, for \begin{align}\label{eq:definitionsOfDelta}
    \delta_1\coloneqq \eps /6, \quad \delta_2\coloneqq \eps /(12 \, \max\{\mathrm{Lip}_{u_0},\mathrm{Lip}_{f}\}), \quad  \delta_3 \coloneqq \eps/12,
\end{align}
we have that
\begin{align*}
    \left\|u_0-\Realization\left(\Phi^{u_0,\delta_1}\right)\right\|_{L^\infty(B_G(0))} < \delta_1,  \quad \left\|X-\Realization\left(\Phi^{X,\delta_2}\right)\right\|_{L^\infty([0,T]\times [0,T]\times K\cross [0,1]^D)} &< \delta_2, \quad  \text{ and } \\ \left\|f-\Realization\left(\Phi^{f,\delta_3}\right)\right\|_{L^\infty([0,T]\times B_G(0) \cross [0,1]^D)} &< \delta_3 .
\end{align*}

Let $\Phi_\tau \coloneqq \left(\left(\bmat{cccc}{1 & 0 & \dots& 0}, 0\right)\right)$ be a NN with one layer and input dimension $2 + n + D$. Moreover, let 
$$
\Phi_\eta \coloneqq \left(\left(\left[ 0_{\R^{D \times (2+n)}} \big\vert \mathrm{Id}_{\R^{D}}\right], 0_{\R^D}\right)\right).
$$
We have that $\Realization(\Phi_\tau)(\tau, t, x, \eta) = \tau$ and $\Realization(\Phi_\eta)(\tau, t, x, \eta) = \eta$, for all $(\tau, t, x, \eta) \in [0,T]\times [0,T]\times K\cross [0,1]^D$. Setting $\Phi^{X, \delta_2, \mathrm{full}} \coloneqq \mathrm{P}(\Phi_\tau, \Phi^{X,\delta_2}, \Phi_\eta)$, we now have that 
$$
\sup_{(\tau, t, x, \eta) \in [0,T]\times [0,T]\times K\cross [0,1]^D}\left| \left(\tau, X\left(\tau, t, x, \eta\right), \eta\right) - \Realization\left(\Phi^{X, \delta_2, \mathrm{full}}\right) \right| \leq \delta_2.
$$
Denoting $f^X(\tau, t, x, \eta) \coloneqq f(\tau,X(\tau,t,x,\eta),\eta)$, we have by the triangle inequality that 
\begin{align}
    &\sup_{(\tau, t, x, \eta) \in [0,T]\times [0,T]\times K\cross [0,1]^D} \left| f^X(\tau,t,x,\eta)  - \Realization\left(\Phi^{f, \delta_3} \odot \Phi^{X, \delta_2, \mathrm{full}}\right)(\tau,t,x,\eta)\right|\nonumber\\
    &\leq \sup_{(\tau, t, x, \eta) \in [0,T]\times [0,T]\times K\cross [0,1]^D} \left| f^X(\tau,t,x,\eta)  - f \circ \Realization\left(\Phi^{X, \delta_2, \mathrm{full}}\right)(\tau,t,x,\eta)\right|\nonumber\\
    &\qquad + \sup_{(\tau, t, x, \eta) \in [0,T]\times [0,T]\times K\cross [0,1]^D} \left|f\circ \Realization\left(\Phi^{X, \delta_2, \mathrm{full}}\right)(\tau,t,x,\eta) - \Realization\left(\Phi^{f, \delta_3} \odot \Phi^{X, \delta_2, \mathrm{full}}\right)(\tau,t,x,\eta)\right|\nonumber\\
    &\leq \mathrm{Lip}_f \delta_2 + \delta_3. \label{eq:estimateOffX}
\end{align}
Let $\widetilde{A}$ be the matrix satisfying $\widetilde{A}(t, x, \eta) = (t, t, x, \eta)$ for $(t, x, \eta) \in [0,T]\times K\cross [0,1]^D$. Then, for 
\begin{align}\label{eq:definitionOfN}
N \coloneqq \left\lceil\frac{15}{\eps} \max\left\{\|f^X\|_{C^1}, 1 + \|f\|_{L^{\infty}([0,1] \times B_G(0) \times [0,1]^D)}\right\}\right\rceil,
\end{align}
we define with Proposition \ref{prop:ApproxOfTrapez}
$$
\Phi^{f, \mathrm{anti}} \coloneqq \widetilde{I}_N\left(\Phi^{f, \delta_3} \odot \Phi^{X, \delta_2, \mathrm{full}}\right) \sconc \left( \left( \widetilde{A}, 0 \right) \right).
$$
We have the following estimate: 
\begin{align}
    &\left| \int_0^t f(\tau,X(\tau,t,x,\eta),\eta) \dtau  - \Realization\left(\Phi^{f, \mathrm{anti}}\right)(t, x, \eta)\right| \nonumber\\
    &\!\!\!\!\!\!\!\!\underset{\text{\footnotesize{Prop. \ref{prop:trapez}}}}{
    \leq} \left| I_N(f^X)(t,t,x,\eta)  - \Realization\left(\Phi^{f, \mathrm{anti}}\right)(t, x, \eta)\right| + \frac{2 \|f^X\|_{C^1}}{N}\nonumber\\
    & \leq \left| I_N(f^X)(t,t,x,\eta)  - \Realization\left( \widetilde{I}_N\left(\Phi^{f, \delta_3} \odot \Phi^{X, \delta_2, \mathrm{full}}\right)\right)(t, t, x, \eta)\right| + \frac{2 \|f^X\|_{C^1}}{N}\nonumber\\
    &\!\!\!\!\!\!\!\!\underset{\text{\footnotesize{ Rem.  \ref{rmk:TrapRule}}}}{
    \leq}\left| I_N\left(f^X\right)(t,t,x,\eta)  - I_N\left(\Realization\left(\Phi^{f, \delta_3} \odot \Phi^{X, \delta_2, \mathrm{full}})\right)\right)(t, t, x, \eta)\right|\nonumber\\
    &\qquad + \frac{2 \|f^X\|_{C^1}}{N} + \frac{3 \left\|\Realization\left(\Phi^{f, \delta_3} \sconc \Phi^{X, \delta_2, \mathrm{full}}\right)\right\|_{L^\infty}}{N}\nonumber\\
    &
    \leq\left| I_N\left(f^X\right)(t,t,x,\eta)  - I_N\left(\Realization\left(\Phi^{f, \delta_3} \odot \Phi^{X, \delta_2, \mathrm{full}}\right)\right)(t, t, x, \eta)\right| + \frac{2 \|f^X\|_{C^1}}{N} + \frac{3 \|\Realization\left(\Phi^{f, \delta_3}\right)\|_{L^\infty}}{N}.\nonumber
\end{align}
By elementary estimates, we therefore conclude that 
\begin{align}
    &\left| \int_0^t f(\tau,X(\tau,t,x,\eta),\eta) \dtau  - \Realization\left(\Phi^{f, \mathrm{anti}}\right)(t, x, \eta)\right|\nonumber\\
    &\leq \sup_{(\tau, t, x, \eta) \in [0,T]\times [0,T]\times K\cross [0,1]^D)} \left| f^X(\tau,t,x,\eta)  - \Realization\left(\Phi^{f, \delta_3} \odot \Phi^{X, \delta_2, \mathrm{full}}\right)(\tau,t,x,\eta)\right| + \nonumber\\
    &\qquad + \frac{2 \|f^X\|_{C^1} + 3 \|\Realization\left(\Phi^{f, \delta_3}\right)\|_{L^\infty}}{N}\nonumber\\
    &\!\!\!\underset{\text{\footnotesize{\eqref{eq:estimateOffX}}}}{
    \leq} \mathrm{Lip}_f \delta_2 + \delta_3 + 
    \frac{2 \|f^X\|_{C^1} + 3 \|\Realization\left(\Phi^{f, \delta_3}\right)\|_{L^\infty}}{N}. \label{eq:EstimateOfIntegralPart}
\end{align}

Setting $\widetilde{X} \coloneqq X(0, \cdot, \cdot, \cdot)$, we conclude for the NN $\Phi^{u,\eps} \coloneqq (\Phi^{u_0,\delta_1} \odot \Phi^{\widetilde{X}, \delta_2})\oplus \Phi^{f, \mathrm{anti}}$ that 
\begin{align*}
    &\left\|u-\Realization\left(\Phi^{u,\eps}\right)\right\|_{L^\infty([0,T]\times K\cross [0,1]^D)}\\
    &\qquad = \left\|u_0 \circ \widetilde{X} \, +  \int_0^t f(\tau,X(\tau,t,x,\eta),\eta) \dtau - \Realization\left(\Phi^{u_0,\delta_1} \odot \Phi^{\widetilde{X}, \delta_2}\right) -  \Realization\left(\Phi^{f, \mathrm{anti}}\right)\right\|_{L^\infty([0,T]\times K\cross [0,1]^D)} \\
    &\qquad \!\!\!\underset{\text{\footnotesize{\eqref{eq:EstimateOfIntegralPart}}}}{
    \leq} \left\|u_0 \circ \widetilde{X} - \Realization(\Phi^{u_0,\delta_1}) \circ \Realization(\Phi^{\widetilde{X}, \delta_2})\right\|_{L^\infty([0,T]\times K\cross [0,1]^D)}\\
    & \qquad \qquad + \mathrm{Lip}_f \delta_2 + \delta_3 + 
    \frac{2 \|f^X\|_{C^1} + 3 \|\Realization\left(\Phi^{f, \delta_3}\right)\|_{L^\infty}}{N} \\
    &\qquad \leq \delta_1 + \mathrm{Lip}_{u_0} \delta_2 + \mathrm{Lip}_f \delta_2 + \delta_3 + 
    \frac{2 \|f^X\|_{C^1} + 3 \|\Realization\left(\Phi^{f, \delta_3}\right)\|_{L^\infty}}{N}\\
    &\qquad \leq \frac{\eps}{6} + \frac{\eps}{12} + \frac{\eps}{12} + \frac{\eps}{12} +
\frac{\eps}{3} < \eps,\end{align*}
where we have used \eqref{eq:definitionOfN} and \eqref{eq:definitionsOfDelta}.

We compute the number of weights using Definition \ref{def:approximability} Theorem~\ref{thm:smoothFunctionsApprox}, Propositions \ref{prop:SparseConc}, \ref{prop:ApproxOfTrapez}, and Remark \ref{rem:ApproximationOfSourceTerm} as 
\begin{align*}
    W\left(\Phi^{u,\eps}\right) 
    &\leq  W\left(\Phi^{u_0,\delta_1} \odot \Phi^{X,\delta_2}\right) +  W\left(\Phi^{f, \mathrm{anti}}\right) \\
    & \leq c \cdot (\eps^{-1/r} + \eps^{-d/k})\cdot (\ln(1/\eps)+1) + c'\cdot N \cdot W\left(\Phi^{f, \delta_3} \odot \Phi^{X, \delta_2, \mathrm{full}} \right) \\
    & \leq c \cdot \left(\eps^{-1/r} + \eps^{-d/k}\right)\cdot \left(\ln(1/\eps)+1\right) + c''\cdot \eps^{-1} \cdot \left(\eps^{-d/k}  + \eps^{-(d+1)/k}\right) \\
    & \leq c''' \cdot (\eps^{-1/r} + \eps^{-(d+1)/k-1}) \cdot (\ln(1/\eps)+1)
\end{align*}
with $c'''=c'''(n,r,d,k,|K|,T,\|V\|_{C^k},u_0, \|f\|_{C^{s'}})>0$. The number of layers is $c''''\cdot (\ln(1/\eps)+1)$, for $c''''= c''''(n,r,d,k,|K|,T,\|V\|_{C^k},u_0, \|f\|_{C^{s'}})>0$, by the same results.
\end{proof}

\subsection{Conservative form}
Next, we extend our results to transport equations in conservative form by invoking Proposition \ref{prop:cons}.
\begin{theorem}\label{thm:resultcons}
Let $V$ satisfy assumptions (H1) and (H2) with $n,D,k\in \N$, and $T>0$. Further let, for $r >0$, $u_0\in C^1(\R^n)$ be $r$-approximable by NNs. Let $u \in C^{1}([0,T]\times\R^n\cross [0,1]^D)$
denote the unique solution of the Cauchy problem for the conservative parametric linear transport equation
\begin{align*}
\begin{cases}
\del_t u(t,x,\eta) +  \mathrm{div}_x(V(t,x,\eta) u(t,x,\eta)) =0, \\
u(0,x,\eta)=u_0(x).
\end{cases}
\end{align*}
Then, for every $\eps \in (0,1)$ and every compact subset $K \subset \R^n$, there exists a NN $\Phi^{\overline{u},\eps}$ with $d$-dimensional input, where $d\coloneqq 1+n+D$, such that for the restriction $\overline{u}\coloneqq \restr{u}{[0,T]\cross K \cross [0,1]^D}$ there holds
\begin{itemize}
\item[(i)] $L\left(\Phi^{\overline{u},\eps}\right)\leq c \cdot (\ln(1/\eps)+1)$,
\item[(ii)] $W\left(\Phi^{\overline{u},\eps}\right) \leq c \cdot \left(1+ \eps^{-1/r} + \eps^{-d/(k-1)} \right)\cdot \left(\ln(1/\eps)+1\right)$,
\item[(iii)] $\left\|\overline{u}-\Realization\left(\Phi^{\overline{u},\eps}\right)\right\|_{L^\infty([0,T] \times K \times [0,1]^D)} < \eps$,
\end{itemize}
for $c = c(n,r,d,k,|K|,T,\|V\|_{C^k}, u_0) > 0$.
\end{theorem}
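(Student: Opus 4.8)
The plan is to mirror the structure of the proof of Theorem \ref{thm:mainresult}, but with the extra complication that the solution \eqref{eq:formsolcons} is now a product $u(t,x,\eta) = u_0(\widetilde{X}(t,x,\eta)) \cdot J(0,t,x,\eta)$, where $\widetilde{X} := X(0,\cdot,\cdot,\cdot)$ and $J(0,t,x,\eta) = \det(D_x X(0,t,x,\eta))$. The first step is to understand the regularity of the Jacobian factor: since $X \in C^k([0,T]\times[0,T]\times\R^n\times[0,1]^D)$ by Theorem \ref{thm:ExistenceX}, the entries of $D_x X$ are in $C^{k-1}$, and the determinant is a fixed polynomial in these $n^2$ entries, hence $J(0,\cdot,\cdot,\cdot) \in C^{k-1}([0,T]\times K \times [0,1]^D)$. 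Moreover, Proposition \ref{prop:boundX} gives a bound on $\|X\|_{C^k}$ on compact sets, from which one obtains (by the product/chain rules for the determinant polynomial) a bound $\|J(0,\cdot,\cdot,\cdot)\|_{C^{k-1}([0,T]\times K \times [0,1]^D)} \leq G'$ depending only on the allowed data. This explains the exponent $\eps^{-d/(k-1)}$ in (ii): $J$ is approximated by Theorem \ref{thm:smoothFunctionsApprox} with one less degree of smoothness.

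Next I would build three constituent networks for a parameter $\eps \in (0,1)$: a network $\Phi^{u_0,\delta_1}$ approximating $u_0$ on the ball $B_G(0)$ (using $r$-approximability), a network $\Phi^{\widetilde{X},\delta_2}$ approximating $\widetilde{X}$ on $U := [0,T]\times K \times [0,1]^D$ (using Theorem \ref{thm:smoothFunctionsApprox}, with $\eps^{-d/k}$ weights), and a network $\Phi^{J,\delta_3}$ approximating $J(0,\cdot,\cdot,\cdot)$ on $U$ (using Theorem \ref{thm:smoothFunctionsApprox} and Remark \ref{rem:smoothFunctionsApprox}, with $\eps^{-d/(k-1)}$ weights). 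I would then form $\Psi := \Phi^{u_0,\delta_1}\odot\Phi^{\widetilde{X},\delta_2}$, which approximates $u_0\circ\widetilde{X}$ exactly as in Theorem \ref{thm:mainresult}, and finally take the product network $\Phi^{\overline{u},\eps} := \Psi \otimes^{\delta_4} \Phi^{J,\delta_3}$ via Proposition \ref{prop:multiplication}, with $\delta_4$ a suitable multiple of $\eps$.

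For the error estimate I would use the standard product bound: writing $a := u_0\circ\widetilde{X}$, $\hat a := \Realization(\Psi)$, $b := J(0,\cdot,\cdot,\cdot)$, $\hat b := \Realization(\Phi^{J,\delta_3})$, one has
\begin{align*}
\|ab - \Realization(\Phi^{\overline{u},\eps})\|_{L^\infty(U)}
&\leq \|ab - \hat a\hat b\|_{L^\infty(U)} + \|\hat a\hat b - \Realization(\Psi\otimes^{\delta_4}\Phi^{J,\delta_3})\|_{L^\infty(U)}\\
&\leq \|a\|_{L^\infty}\|b-\hat b\|_{L^\infty} + \|\hat b\|_{L^\infty}\|a-\hat a\|_{L^\infty} + \delta_4,
\end{align*}
and since $\|a\|_{L^\infty} \leq \|u_0\|_{L^\infty(B_G(0))}$ and $\|\hat b\|_{L^\infty} \leq \|b\|_{L^\infty(U)}+\delta_3$ are bounded by data-dependent constants, choosing $\delta_1,\delta_2,\delta_3,\delta_4$ as appropriate fixed fractions of $\eps$ divided by these constants (and by $\mathrm{Lip}_{u_0}$) makes the total at most $\eps$. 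The depth bounds follow from Propositions \ref{prop:SparseConc} and \ref{prop:multiplication} (each contributing $O(\ln(1/\eps))$ layers), and the weight bound is the sum $W(\Psi) + W(\Phi^{J,\delta_3}) + O(\ln(1/\eps))$ from the multiplication network, which gives the claimed $c\,(1 + \eps^{-1/r} + \eps^{-d/(k-1)})(\ln(1/\eps)+1)$ once one notes $\eps^{-d/k} \leq \eps^{-d/(k-1)}$.

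The main obstacle I anticipate is the bookkeeping around the Jacobian: one must verify carefully that $J(0,\cdot,\cdot,\cdot)$ really does retain $C^{k-1}$ regularity with a norm controlled purely by $\|V\|_{C^k}$, $T$, $|K|$, $n$, $d$, $k$ (so that the constant $c$ has the stated dependence and nothing more), and — a subtle point for Remark \ref{rem:smoothFunctionsApprox} — that $J$ need not be bounded by $1$, so the scaled version of Theorem \ref{thm:smoothFunctionsApprox} must be invoked with radius $G'$ and with the domain $[0,T]\times K\times[0,1]^D$ rescaled to the unit cube; both only affect the constant, not the $\eps$-rate. The multiplication step itself is routine given Proposition \ref{prop:multiplication}, the only care being that the constant $c_2$ there depends on $\|\Realization(\Psi)\|_{L^\infty}$ and $\|\Realization(\Phi^{J,\delta_3})\|_{L^\infty}$, which are themselves bounded in terms of the data as noted above.
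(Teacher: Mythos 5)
Your proposal is correct and follows essentially the same route as the paper's proof: approximate $u_0\circ\widetilde{X}$ as in Theorem \ref{thm:mainresult}, approximate the $C^{k-1}$-regular Jacobian factor $J$ via Theorem \ref{thm:smoothFunctionsApprox} (the paper bounds $\|J\|_{L^\infty}$ via Hadamard's inequality, while you invoke the polynomial structure of the determinant together with Proposition \ref{prop:boundX}, which amounts to the same control), and combine the two with Proposition \ref{prop:multiplication} using the standard product error splitting. The accuracy, depth, and weight bookkeeping you describe matches the paper's.
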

\begin{remark}
\begin{itemize}
    \item If $u_0 \in C^{s}(\R^n)$, then Remark \ref{rem:RApproximable} demonstrates that we can replace $r$ in Theorem \ref{thm:resultcons} by $s/n$ and the constant $c$ in Theorem \ref{thm:resultcons} depends more specifically on $\|u_0\|_{C^s}$.
    \item As in earlier results, the statement of Theorem \ref{thm:resultcons}, extends to a weak formulation of the transport equation via Remark \ref{rem:WeakSolutionConservative}.
\end{itemize}

\end{remark}

\begin{proof}
Recall that, by Proposition \ref{prop:cons}, the unique solution $u \in C^{1}([0,T]\times\R^n\cross [0,1]^D)$ of \eqref{eq:transportequation} is given by 
\begin{align*}
    u(t,x,\eta) = u_0(X(0,t,x,\eta))J(0,t,x,\eta). 
\end{align*}
Moreover, $X \in C^k([0,T]\cross [0,T]\times\R^n\times[0,1]^D)$ by Theorem~\ref{thm:ExistenceX} and therefore $J \in C^{k-1}([0,T]\cross[0,T]\times\R^n\times[0,T]^D)$. 

The proof proceeds by first approximating $J$ by a NN with help of Theorem~\ref{thm:smoothFunctionsApprox}, then invoking the known approximation of $u_0\circ X$ via Theorem \ref{thm:mainresult}, and then applying the multiplication of NNs by Proposition \ref{prop:multiplication}.

Let $G_1 \coloneqq G_0 + 3 \|V\|_{C^1}\exp(T\|V\|_{C^1})$ be the bound of  $\|X\|_{C^1}$ from \eqref{boundXC1}. The Hadamard inequality \cite[Corollary 7.8.2]{Horn1985} implies that
\begin{align*}
    \|J\|_{L^\infty([0,T] \times K \times [0,1]^D)} &\leq \sup_{(t,x,\eta) \in [0,T] \times K \times [0,1]^D} \prod_{i=1}^n \left\|\left(D_x X(0,t,x,\eta)\right)_i\right\|_2 \\
    &\leq \prod_{i=1}^n \sqrt{n \|X\|_{C^1}^2} = n^{n/2} \|X\|_{C^1}^n \leq n^{n/2} G_1^n  \eqqcolon G_J,
\end{align*} 
where $(D_x X(s,t,x,\eta))_i$ denotes the $i$-th column vector of the matrix $D_x X$.\\ 
\mbox{Theorem~\ref{thm:smoothFunctionsApprox}} implies that there exist NNs $\Phi^{u_0,\delta_1}$, $\Phi^{X,\delta_2}$, and
$\Phi^{J,\delta_3}$ with $n$, $d$, and $d$-dimensional input dimension
respectively such that, for $\delta_1\coloneqq \eps /(8 G_J)$, 
$\delta_2\coloneqq \eps /(8 \, \mathrm{Lip}_{u_0} G_J)$, and $\delta_3 \coloneqq \eps/(4 \|u_0\|_{L^\infty})$, there holds
\begin{align*}
    \left\|u_0-\Realization\left(\Phi^{u_0,\delta_1}\right)\right\|_{L^\infty}< \delta_1,  \quad \left\|X-\Realization\left(\Phi^{X,\delta_2}\right)\right\|_{L^\infty} < \delta_2,\quad  \text{ and } \quad\left\|J-\Realization\left(\Phi^{J,\delta_3}\right)\right\|_{L^\infty} < \delta_3 .
\end{align*}
We conclude for $\Phi^{u,\eps} \coloneqq (\Phi^{u_0,\delta_1} \odot \Phi^{X, \delta_2})\otimes^{\eps/4} \Phi^{J, \delta_3} $ that 
\begin{align*}
    &\left\|u-\Realization\left(\Phi^{u,\eps}\right)\right\|_{L^\infty([0,T] \times K \times [0,1]^D)}\\
    &\qquad = \left\|\left(u_0 \circ X\right) \cdot J - \Realization\left(\left(\Phi^{u_0,\delta_1} \odot \Phi^{X, \delta_2}\right)\otimes^{\eps/4} \Phi^{J, \delta_3}\right)\right\|_{L^\infty([0,T] \times K \times [0,1]^D)} \\
    &\qquad= \left\| \left( u_0 \circ X \right) \cdot J - \left(\Realization\left(\Phi^{u_0,\delta_1}\right) \circ \Realization\left(\Phi^{X, \delta_2}\right)\right) \cdot  \Realization\left(\Phi^{J, \delta_3}\right) \right\|_{L^\infty([0,T] \times K \times [0,1]^D)}+ \frac{\eps}{4} \\
    &\qquad\leq \left\| \left( u_0 \circ X \right) \cdot J - \Realization\left(\Phi^{u_0,\delta_1}\right)\circ \Realization\left(\Phi^{X,\delta_2}\right)\cdot J\right \|_{L^\infty([0,T] \times K \times [0,1]^D)} \nonumber\\
    &\qquad \qquad + \left\| \left( \Realization\left(\Phi^{u_0,\delta_1}\right)\circ \Realization\left( \Phi^{X,\delta_2}\right)\right) \cdot J - \left(\Realization\left(\Phi^{u_0,\delta_1}\right) \circ \Realization\left(\Phi^{X, \delta_2}\right) \right) \Realization\left(\Phi^{J, \delta_3}\right)\right\|_{L^\infty([0,T] \times K \times [0,1]^D)} + \frac{\eps}{4}\\
    &\qquad\leq  \left\| u_0 \circ X -\Realization\left(\Phi^{u_0,\delta_1}\right) \circ \Realization\left(\Phi^{X,\delta_2}\right)\right\|_{L^\infty([0,T] \times K \times [0,1]^D)} \|J\|_{L^\infty([0,T] \times K \times [0,1]^D)} \\
    &\qquad \qquad + \left\|\Realization\left(\Phi^{u_0,\delta_1}\right) \circ \Realization\left(\Phi^{X, \delta_2}\right) \right\|_{L^\infty([0,T] \times K \times [0,1]^D)} \left\|J - \Realization\left(\Phi^{J, \delta_3}\right)\right\|_{L^\infty([0,T] \times K \times [0,1]^D)} + \frac{\eps}{4} \\
    &\qquad\leq \frac{\eps}{4} + \left\|u_0 \circ X -\Realization\left(\Phi^{u_0,\delta_1}\right) \circ \Realization\left(\Phi^{X,\delta_2}\right)\right\|_{L^\infty([0,T] \times K \times [0,1]^D)} \left\|J - \Realization\left(\Phi^{J, \delta_3}\right)\right\|_{L^\infty([0,T] \times K \times [0,1]^D)}\\
    &\qquad \qquad + \left\|u_0 \circ X\right\|_{L^\infty([0,T] \times K \times [0,1]^D)}\left\|J - \Realization\left(\Phi^{J, \delta_3}\right)\right\|_{L^\infty([0,T] \times K \times [0,1]^D)} + \frac{\eps}{4}\\  
    &\qquad \leq \frac{\eps}{4}+\frac{\eps}{4}+\frac{\eps}{4}+\frac{\eps}{4}= \eps.
\end{align*}
We have assumed without loss of generality that $\|J - \Realization(\Phi^{J, \delta_3})\|_{L^\infty([0,T] \times K \times [0,1]^D)} \leq 1$ and have used the previous result from Theorem~\eqref{thm:mainresult} to bound $\|u_0 \circ X -\Realization(\Phi^{u_0,\delta_1}) \circ \Realization(\Phi^{X,\delta_2})\|_{L^\infty([0,T] \times K \times [0,1]^D)}$.\\
We compute the number of weights with Definition \ref{def:approximability},  Theorem~\ref{thm:smoothFunctionsApprox},  Proposition \ref{prop:SparseConc}, and \ref{prop:multiplication} as 
\begin{align*}
    W\left(\Phi^{u,\eps}\right) 
    &\leq c \cdot \left(\ln(1/\eps)+1\right) +  2 \cdot \left(W\left(\Phi^{u_0,\delta_1} \odot \Phi^{X,\delta_2}\right) + W\left(\Phi^{J,\delta_3}\right)\right) \\
    & \leq c \cdot \left(1+ \eps^{-1/r} + \eps^{-d/k} + \eps^{d/{k-1}}\right) \cdot (\ln(1/\eps)+1) 
\end{align*}
with $c=c(n,r,d,k,|K|,T,\|V\|_{C^k},u_0) > 0$. The number of layers is $c \cdot (\ln(1/\eps)+1)$ by the same theorems and propositions.
\end{proof}

\section{Extensions}\label{sec:Extensions}

Below, we discuss some natural extensions of our work to more general settings.

\begin{itemize}
    \item \emph{Non-linear transport equations:} An immediate question is to what extent the results carry over to the non-linear setting. We believe that it is highly unlikely that similar results hold in this regime without overly restrictive assumptions. Indeed, in the non-linear case, non-smoothness of the initial condition $u_0$ potentially implies non-smoothness of the characteristic curves described by $X$. This can already be seen in the one-dimensional case. We consider the one-dimensional non-linear transport equation
        \begin{align*}
            \begin{cases}
            \del_t u(t,x,\eta) +  \del_x[f(u(t,x,\eta))] =0 , \\
            u(0,x,\eta)=u_0(x).
            \end{cases}
        \end{align*}
    The characteristic system of ODEs is then given by \cite[p. 26]{Serre1999} as 
    \begin{numcases}{\label{eq:charsysnonlin}}
    \del_s X(s,t,x,\eta) = f'(u(t,X(s,t,x,\eta),\eta)) , \\
    X(t,t,x,\eta)=x.
\end{numcases}
    Hence, the regularity of the characteristic curves described by $X$ depends on the global regularity of $u$ and therefore on the regularity of $u_0$ and, therefore, $X$ is not guaranteed to be smooth. 

    If $X$ is non-smooth, then the fundamental backbone of the argument, which is that $u$ can be written as the composition of a high-dimensional smooth and low-dimensional (potentially) rough function, collapses.
    
    \item \emph{Damping/amplification}: The extension of our results to parametric linear transport equations that include an amplification or damping factor is straight-forward. More precisely, we consider solutions of the equation 
    \begin{numcases}{\label{eq:amplificationeq}}
      \del_t u(t,x,\eta) + V(t,x,\eta)\cdot \grad_x u(t,x,\eta) + a(t,x,\eta) u(t,x,\eta)=0,\\
      u(0,x,\eta)=u_0(x),
    \end{numcases}
    where $a$ is, similarly to \eqref{eq:fLow} and \eqref{eq:fHigh}, either given by 
    $a(t,x,\eta)=a_1(t,x) \in C^{s'}([0,T]\cross\R^n)$, for $s' \coloneqq \lceil (n+1) k/d\rceil$, or 
     $a(t,x,\eta)=a_2(t,x,\eta) \in C^{s'}([0,T]\cross\R^n\cross[0,1]^D)$, for $s' \coloneqq k$. \\
     If $V$ satisfies assumptions (H1) and (H2) with $n,D,k\in \N$, $T>0$, and $u_0 \in C^1(\R^n)$ being $r$-approximable by NNs for $r >0$, then one can show, see \cite{LectureNotesMeanField}, that there exists a unique solution of \eqref{eq:amplificationeq} which is given by
     \begin{align}\label{eq:solamp}
         u(t,x,\eta) = u_0(X(0,t,x,\eta)) \exp\left(-\int_0^t a(\tau, X(\tau,t,x,\eta), \eta) \dtau \right).
     \end{align}
    To get an estimate on the sizes of approximating NNs for functions of the form of \eqref{eq:solamp}, we only have to combine previous results. 
    Section \ref{sec:source} describes how to approximate the map  $(t, x, \eta) \mapsto -\int_0^t a(\tau, X(\tau,t,x,\eta), \eta) \dtau$ by realisations of NNs.
    This approximation can be concatenated with an approximation of the smooth, one-dimensional exponential function via Proposition \ref{prop:SparseConc}. 
    Finally, the result may be multiplied, via Proposition \ref{prop:multiplication}, with the already known approximation of $u_0(X(0,t,x,\eta))$ from Theorem \ref{thm:mainresult}. 
    This yields a NN $\Phi^{\overline{u},\eps}$ such that the realisation of $\Phi^{\overline{u},\eps}$ approximates \eqref{eq:solamp} up to an error of $\eps>0$. 
    Estimating the individual sizes of the networks involved in the construction of $\Phi^{\overline{u},\eps}$ yields that
       \begin{align*}
        L\left(\Phi^{\overline{u},\eps}\right) &\leq c \cdot \left(\ln(1/\eps)+1\right),\\
        W\left(\Phi^{\overline{u},\eps}\right) &\leq c \cdot \left( \eps^{-1/r} + \eps^{-(d+1)/k-1}  \right)\cdot \left(\ln(1/\eps)+1\right),
        \end{align*}
    with $c = c(n,r,d,k,|K|,T,\|V\|_{C^k}, u_0, \|a\|_{C^{s'}} ) > 0$. 
    As before, if $u_0 \in C^{s}(\R^n)$, then $r = s/n$.
    \item \emph{Parameter dependence of initial condition:} We only considered the case where $u_0$ does not depend on the parameters. It is not hard to see that, in the framework of $r$-approximability, the same result would hold if $u_0$ depended on the parameters. However, if $u_0\in C^s(\R^n \times [0,1]^D)$, then Remark \ref{rem:RApproximable} would yield an approximation rate depending on the dimension $D$ of the parameter space. \\
    For an application of Remark \ref{rem:RApproximable} it is required that $u_0$ is a low-dimensional function. Hence, if $u_0 \in C^s$ depends on very few parameters, say the first $t \ll D$, then all main theorems can be extended directly. Instead of approximating $x \mapsto u_0(x)$ with a NN up to an error of $\epsilon > 0$ and having to use $\mathcal{O}(\epsilon^{-n/s})$ many weights for $\eps \to 0$, one would instead approximate $x \mapsto u_0(x, \eta_1, \dots, \eta_t)$ which requires $\mathcal{O}(\epsilon^{-(n+t)/s})$ many weights for $\eps \to 0$. \\
    A second framework in which $u_0$ could be guaranteed to be $r$-approximable with large $r$ while having low spatial smoothness is that where the parameter dependence is decoupled from the dependence on the spatial coordinates. For example, if $u_0(x, \eta) = \tilde{u}_0(x) \cdot \kappa_1(\eta) + \kappa_2(\eta)$ for smooth $\kappa_1, \kappa_2$, then again low regularity of $\tilde{u}_0$ could suffice to achieve fast rates.
    
    \item \emph{Weak solutions with discontinuous initial condition:} Since realisations of deep neural networks are always continuous functions, we cannot hope to obtain approximation results in the uniform norm as studied in this work. However, if one considers $L^p$-approximation, for $p \in [1,\infty)$ instead, then approximation of piecewise regular functions is possible. This situation was studied in \cite{petersen2018optimal}. 
\end{itemize}

\section*{Acknowledgements}
The authors would like to thank Avi Mayorcas for inspiring discussions in the early stage of this work. 
P.P is grateful for the hospitality and support of the Institute of Mathematics of the University of Oxford during his visit in January 2020.
\appendix

\section{Proof of Proposition \ref{prop:boundX}} \label{app:boundX}
\begin{proof}
For simplicity we drop the $\eta$-dependence of $X$ since derivatives with respect to $\eta$ are easy to compute and bound. Moreover, we use the norm and semi-norm 
\begin{align*}
            \|X\|_{C^k(\Omega)} \coloneqq \max_{\alpha: |\alpha|\leq k} \sup_{x \in \Omega} |D^\alpha X(x)|,  \qquad 
            |X|_{C^k(\Omega)} \coloneqq \max_{\alpha: |\alpha|= k} \sup_{x \in \Omega} |D^\alpha X(x)|,
\end{align*}
write $\|X\|_{C^{j}}$ for $ \|X\|_{C^{j}([0,T]\cross [0,T]\cross K)}$, and define $d\coloneqq 1+1+n$. We start with the definition of $X$ given by 
\begin{numcases}{\label{Xdef}}
\del_s X(s,t,x)=V(s,X(s,t,x)),\label{Xdef1}\\
X(t,t,x)=x.
\end{numcases}
By the fundamental theorem of calculus we conclude 
\begin{align}\label{eq:formulaX}
    X(s,t,x)=x + \int^s_t V(\tau,X(\tau,t,x)) \dtau.
\end{align}
In the following, we show how to bound the zeroth, first, and second derivatives to identify the pattern how these bounds are built.
With the help of the sub-linear growth-condition (H2) we conclude
\begin{align*}
    |X(s,t,x,\eta)|\leq |x| + \Big|\int_t^s |V(\tau,X(\tau,t,x,\eta),\eta)|\,\mathrm{d} \tau \Big| &\leq |x| + C \Big |  \int_t^s (1+|X(\tau,t,x,\eta)|) \,\mathrm{d} \tau\Big| \\
    &\leq |x| + CT + C \int_t^s |X(\tau,t,x,\eta)| \, \mathrm{d} \tau.
\end{align*}
and by Gronwall's inequality 
\begin{align*}
    \sup_{s \in [0,T]} |X(s,t,x)| \leq (|x| + CT) \exp(C T).
\end{align*}
Hence, 
\begin{align}\label{eq:XC0}
    \|X\|_{C^{0}} \leq (|K| + CT) \exp(C T)\eqqcolon G_0(T,|K|,\|V\|_{C^0})\eqqcolon G_0.
\end{align}
Now we derive bounds for the first derivatives. In the following, we abbreviate $\|V\|_{C^j([0,T]\cross G_0)}$ by $\|V\|_{C^j}$. We have by \eqref{Xdef1} 
\begin{align*}
    \|\del_s X\|_{C^0} \leq \|V\|_{C^0}.
\end{align*}
Furthermore, by Leibniz integral rule applied to \eqref{eq:formulaX}
\begin{align*}
     \del_t X(s,t,x) = -V(t,x) + \int^s_t \grad_x V(\tau, X(\tau,t,x)) \del_t X(\tau,t,x) \dtau
\end{align*}
and therefore 
\begin{align*}
    |\del_t X(s,t,x)| \leq \|V\|_{C^0} + \|V\|_{C^1} \int^s_t |\del_t X(\tau,t,x)| \dtau. 
\end{align*}
Gronwall's inequality implies then
\begin{align*}
    \|\del_t X\|_{C^0} \leq \|V\|_{C^0} \exp(T \|V\|_{C^1}). 
\end{align*}
The same procedure results for $\grad_x X$ in 
\begin{align*}
    \|\grad_x X\|_{C^0} \leq \exp(T \|V\|_{C^1}).
\end{align*}
Thus, we get after assuming without loss of generality that $T\geq 1, \|V\|_{C^j}\geq 1$
\begin{align}\label{boundXC1}
    \|X\|_{C^1}\leq \max\left\{G_0, \|V\|_{C^1} \exp (T \|V\|_{C^1})\right\}.
\end{align}
As the next step, we derive a bound for the second derivatives. We have
\begin{align*}
    \del_{ss} X(s,t,x)= \del_s V(s,X(s,t,x)) + \nabla_x V(s,X(s,t,x)) \del_s X(s,t,x) 
\end{align*}
and consequently
\begin{align*}
  \|\del_{ss} X\|_{C^0} \leq \|V\|_{C^1} + \|V\|^2_{C^1} \leq 2 \|V\|^2_{C^1}.
\end{align*}
Moreover, 
\begin{align*}
    \|\del_{st} X \|_{C^0}=\| \grad_x V(s,X(s,t,x)) \del_t X(s,t,x)\|_{C^0} \leq \|V\|^2_{C^1} \exp(T \|V\|_{C^1})
\end{align*}
and with
\begin{align*}
        \del_{tt} X(s,t,x)&=-\del_t V(t,x) - \grad_x V(t,X(t,t,x)) \del_t X(t,t,x)  \\
        &\qquad + \int^s_t \grad_{xx} V  (\tau,X(\tau,t,x))\del_t X(\tau,t,x)\del_t X(\tau,t,x)\dtau\\
        &\qquad \qquad + \int^s_t \grad_x V(\tau,X(\tau,t,x)) \del_{tt} X(\tau,t,x) \dtau
\end{align*}
we conclude again by Gronwall's inequality
\begin{align*}
    \|\del_{tt} X\|_{C^0} \leq 4T \|V\|^3_{C^2} \exp(T \|V\|_{C^1})^3. 
\end{align*}
The derivatives $\del_{sx}X,\del_{tx}X$ and $\del_{xx}X$ can be computed in the same way and bounded by the same term as $\del_{tt}X$. Note that the semi-norm $|X|_{C^1}$ is bounded by $|X|_{C^2}$. Thus, we receive
\begin{align*}
    \|X\|_{C^2} \leq \max\left\{G_0, 2\cdot 4 T \|V\|^3_{C^2} \exp (T \|V\|_{C^1})^3\right\}.
\end{align*}
We can iterate this procedure to receive the bound for the $k$-th derivative of the form: 
\begin{align}\label{eq:boundX}
    \|X\|_{C^k} \leq \max\left\{ G_0, 2^k T^{k-1} \|V\|^{2k-1}_{C^k} \exp(T \|V\|_{C^1})^{2k-1}\right\},
\end{align}
where we have used that the semi-norm $|\cdot|_{C^k}$ bounds the $k-1$ previous semi-norms.
The bound in \eqref{eq:boundX} includes a factor $2^k$ because every application of the product rule doubles the number of summands and a factor $T^{k-1}$ because with every derivative we get a new factor $T$ from bounding $\|\int^s_t \dots\|$.\qedhere
 \end{proof}

\textbf{Remark.} The proof of Proposition \ref{prop:boundX} might not provide a very sharp bound for the $C^k$-norm of $X$. However, for our approximation result in Chapter 4 it is only important that the norm is bounded by some constant that depends on the given data, since the norm of $X$ only enters as a constant and does not influence the asymptotic behaviour in the tolerance $\eps$. Whether the size of the bound influences the practicality of the approximation via deep NNs will be investigated in numerical simulations in future work.

\section{Construction of a NN emulating the left Riemann sum}\label{app:TrapRule}

\begin{proposition}\label{prop:ApproxOfTrapez}
Let $d \in \N_{\geq 2}$, $T>0$, $\Omega \subset \R^{d-1}$, and let $\Phi$ be a NN with $d$-dimensional input. Then there exists a NN $\widetilde{I}_N(\Phi)$ such that 
\begin{flalign}\quad &\bullet \ L\left(\widetilde{I}_N(\Phi)\right) = L(\Phi) + c_1, && \nonumber \\
\quad &\bullet \ W\left(\widetilde{I}_N(\Phi)\right) \leq c_2 \cdot N \cdot W(\Phi),&&\nonumber\\
\quad &\bullet \ \sup_{t \in [0,T], x \in \Omega}\left|\Realization\left(\widetilde{I}_N(\Phi)(t,x)\right) - \frac{1}{N}\sum_{i=0}^{\lceil t N /T \rceil-1} \Realization(\Phi)\left( \frac{i T }{N} , x\right) \right| \leq \frac{c_3}{N},&&\label{eq:ApproxStatementOfIntegralProposition}
\end{flalign}

where $c_1, c_2 >0$ are independent of $\Phi$ and $c_3 \coloneqq 3 \|\Realization(\Phi)\|_{L^\infty([0,T]\times \Omega)}$. \end{proposition}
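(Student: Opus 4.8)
The object to be emulated is the map $S_N\colon(t,x)\mapsto\frac1N\sum_{i=0}^{\lceil tN/T\rceil-1}\Realization(\Phi)(iT/N,x)$, which is piecewise constant (hence discontinuous) in $t$, so some error of order $1/N$ is intrinsic and the claimed bound $c_3/N$ is the right target. The plan is to realise $\widetilde I_N(\Phi)$ as a sparse concatenation of two blocks: a \emph{parallel-evaluation block} that simultaneously outputs the values $a_i(x):=\Realization(\Phi)(iT/N,x)$ for $i=0,\dots,N-1$ together with continuous ramp functions $g_i(t)\in[0,1]$ acting as time-gates, followed by a shallow \emph{gate-and-sum block} computing $\frac1N\sum_i a_i(x)\,g_i(t)$. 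The one restriction to respect is that this product be implemented \emph{exactly} in $O(1)$ extra layers: the approximate multiplication network $\otimes^\eps$ of Proposition~\ref{prop:multiplication} is inadmissible here, since its $\ln(1/\eps)$ depth overhead would violate $L(\widetilde I_N(\Phi))=L(\Phi)+c_1$.

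\textbf{The gate-and-sum gadget (key step).} Set $M:=\|\Realization(\Phi)\|_{L^\infty([0,T]\times\Omega)}$; if $M=\infty$ the statement is vacuous, so assume $M<\infty$. For $a\in[-M,M]$ and $g\in[0,1]$ define $\phi_M(a,g):=\varrho\big(a+M(g-1)\big)-\varrho\big(-a+M(g-1)\big)$. Then $\phi_M(a,1)=\varrho(a)-\varrho(-a)=a$ (the ReLU identity), $\phi_M(a,0)=0$ (because $|a|\le M$ makes both ReLU arguments nonpositive), and, since $M(g-1)\le0$, one checks $|\phi_M(a,g)|\le|a|$ for every $g\in[0,1]$. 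Hence a two-layer NN can map $(a_0,\dots,a_{N-1},g_0,\dots,g_{N-1})\mapsto\frac1N\sum_{i=0}^{N-1}\phi_M(a_i,g_i)$ using $O(N)$ weights. This exact ``ReLU gate'' is the only non-routine ingredient of the proof.

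\textbf{Assembling the network.} For each $i$ let $B_i$ be the one-layer NN with $\Realization(B_i)(t,x)=(iT/N,x)$, and set $\Phi^{(i)}:=\Phi\odot B_i$, so that $\Realization(\Phi^{(i)})(t,x)=a_i(x)$ and, by Proposition~\ref{prop:SparseConc}, $L(\Phi^{(i)})=L(\Phi)+1$ and $W(\Phi^{(i)})\le2W(\Phi)+2d$. Let $\Gamma$ be the two-layer NN with $\Realization(\Gamma)(t,x)=(g_i(t))_{i=0}^{N-1}$, where $g_i(t):=\tfrac{N}{T}\big(\varrho(t-iT/N)-\varrho(t-(i+1)T/N)\big)$ is the ramp that vanishes for $t\le iT/N$, equals $1$ for $t\ge(i+1)T/N$, and is affine in between; here $W(\Gamma)=O(N)$. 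Parallelising $\Phi^{(0)},\dots,\Phi^{(N-1)},\Gamma$ via Proposition~\ref{prop:parallelization} gives a NN of depth $L(\Phi)+1$ with $O\big(N(W(\Phi)+d)\big)$ weights whose realisation is $(a_0(x),\dots,a_{N-1}(x),g_0(t),\dots,g_{N-1}(t))$. Sparse-concatenating it with the gate-and-sum gadget defines $\widetilde I_N(\Phi)$, and Proposition~\ref{prop:SparseConc} yields $L(\widetilde I_N(\Phi))=L(\Phi)+3$ and $W(\widetilde I_N(\Phi))\le c_2\,N\,W(\Phi)$, after disposing of the trivial case $W(\Phi)=0$ (then $\Realization(\Phi)\equiv0$ and one takes $\widetilde I_N(\Phi)$ realising $0$), which lets one assume $W(\Phi)\ge1$ when absorbing the additive constants into $c_2$.

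\textbf{Error estimate.} Since $\lceil tN/T\rceil-1$ is exactly the largest index $i$ with $iT/N<t$, we have $S_N(t,x)=\frac1N\sum_{i:\,iT/N<t}a_i(x)$, while $\Realization(\widetilde I_N(\Phi))(t,x)=\frac1N\sum_{i=0}^{N-1}\phi_M(a_i(x),g_i(t))$. Fix $(t,x)\in[0,T]\times\Omega$ and split the indices into three exhaustive cases: if $t\ge(i+1)T/N$ then $g_i(t)=1$, so $\phi_M(a_i,g_i)=a_i$, and $i$ lies in the target sum, so the contributions cancel; if $t\le iT/N$ then $g_i(t)=0$, so $\phi_M(a_i,g_i)=0$, and $i$ is absent from the target sum, so again nothing; the only remaining indices satisfy $iT/N<t<(i+1)T/N$, of which there is \emph{at most one}, contributing at most $\frac1N\big(|\phi_M(a_i,g_i)|+|a_i|\big)\le\frac{2M}{N}$. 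Hence the left-hand side of \eqref{eq:ApproxStatementOfIntegralProposition} is bounded by $\frac{2M}{N}\le\frac{3M}{N}=\frac{c_3}{N}$. The only step that required a genuine idea rather than bookkeeping with Propositions~\ref{prop:SparseConc} and~\ref{prop:parallelization} is the exact gate $\phi_M$ and the accompanying observation that, for each $t$, all but one of the ramps $g_i(t)$ take a value in $\{0,1\}$.
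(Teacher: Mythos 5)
Your construction is correct and follows essentially the same route as the paper's proof: shifted copies $\Phi\odot B_i$ realising $\Realization(\Phi)(iT/N,x)$, the piecewise-linear ramps $g_i$, an exact two-ReLU gate that multiplexes each value by its ramp (your $\phi_M(a,g)=\varrho(a+M(g-1))-\varrho(-a+M(g-1))$ is the paper's ``clip'' network in a slightly different parametrisation, and even yields the marginally sharper bound $2M/N\le c_3/N$), and the observation that at most one index has $g_i(t)\notin\{0,1\}$. The bookkeeping via Propositions~\ref{prop:SparseConc} and~\ref{prop:parallelization} matches the paper's as well.
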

\begin{proof}
Let, for $i \in \{ 0, 1, \dots, N\}$, $t_i \coloneqq i T /N$. 
We define, for $i\in \{0, \dots, N-1\}$,
\begin{align*}
    \Phi_i^{\mathrm{(shift)}} \coloneqq \Phi \sconc  \left( \left(\begin{array}{cc}
        0 & 0 \\
        0 & \mathrm{Id}_{\R^{d-1}}
    \end{array}\right), \left(\begin{array}{c}
         t_i  \\
         0 
    \end{array}\right) \right) .
\end{align*}
Then $\Realization(\Phi_i^{\mathrm{(shift)}})(t,x) = \Realization(\Phi)(t_i, x)$, for all $t \in [0,T], x \in \Omega$. Moreover, $W(\Phi_i^{\mathrm{(shift)}}) \leq 2W(\Phi) + 2d$ and $L(\Phi_i^{\mathrm{(shift)}}) = L(\Phi) + 2$ by Proposition \ref{prop:SparseConc}.
 Next, we define the following indicator networks for $i \in \{0, \dots, N-1\}$: 
\begin{align*}
\Phi^{\mathrm{(ind)}}_i \coloneqq \left( \left( \bmat{cccc}{1& 0& \dots& 0}, 0 \right), \left( \bmat{c|c}{
					1 &  0_{\R^{d}}\\
					1 &  0_{\R^{d}}\\
					}, \binom{-t_i}{-t_{i+1}}\right), \left( 
\bmat{cc|c|c}{N & -N & 0 & 0_{\R^{d}}}, 0\right)\right).
\end{align*}
We have that $W(\Phi^{\mathrm{(ind)}}_i) = 7$, $L(\Phi^{\mathrm{(ind)}}_i) = 3$ and, for $t\in [0,T]$ and $x \in \Omega$,
\begin{align}\label{eq:RealizationofInd}
\Realization\left(\Phi^{\mathrm{(ind)}}_i\right)(t,x) = N \cdot (\varrho(t- t_i) - \varrho(t- t_{i+1})) = \left\{ \begin{array}{ll}
0 & \text{ if } t \leq t_i,\\
N \cdot (t - t_i) & \text{ if } t_i < t < t_{i+1},\\ 
1 & \text{ if } t \geq t_{i+1}.\\
\end{array}\right.
\end{align}
Let $\bar{a} \coloneqq \|\Realization(\Phi)\|_{L^\infty([0,T]\times \Omega)}$. Now we set, for $i \in \{0, \dots, N-1\}$, 
\begin{align*}
\Phi^{\mathrm{(clip)}}_i\coloneqq \left( \left( \left(\begin{array}{cc} 2\bar{a} & 1\\2\bar{a} & 0 \end{array} \right), \binom{-\bar{a}}{-\bar{a}}\right), \left(\bmat{cc|cc}{1& -1 & 0 & 0}, 0 \right)\right) \sconc \mathrm{P}\left(\Phi^{\mathrm{(ind)}}_i, \Phi_i^{\mathrm{(shift)}}\right).
\end{align*}
We have that 
\begin{align}
\Realization\left(\Phi^{\mathrm{(clip)}}_i\right)(t,x) &= \varrho\left(2\bar{a}\Realization\left(\Phi_i^{\mathrm{(ind)}}\right)(t,x) + \Realization\left(\Phi_i^{\mathrm{(shift)}}\right)(t,x) -\bar{a}  \right) -  \varrho\left(2\bar{a} \Realization\left(\Phi_i^{\mathrm{(ind)}}\right)(t,x) -\bar{a}  \right).\label{eq:FormOfPhiClip}
\end{align}
It follows from \eqref{eq:RealizationofInd} and \eqref{eq:FormOfPhiClip} that, for $t \in [0,T]$ and $x \in \Omega$, 
\begin{align}
\Realization\left(\Phi^{\mathrm{(clip)}}_i\right)(t,x) &=  0, \text{ if } t \leq t_{i}\label{eq:clipProp1}\\
\Realization\left(\Phi^{\mathrm{(clip)}}_i\right)(t,x) &=  \Realization\left(\Phi_i^{\mathrm{(shift)}}\right)(t,x), \text{ if } t \geq t_{i+1},\label{eq:clipProp2}\\
\left|\Realization\left(\Phi^{\mathrm{(clip)}}_i\right)(t,x)\right| &\leq 2\bar{a}, \text{ else}.\label{eq:clipProp3}
\end{align}
 In addition, by Propositions \ref{prop:SparseConc} and \ref{prop:parallelization}, 
\begin{align}
L\left(\Phi^{\mathrm{(clip)}}_i\right) &= 2 + \max\{3, L(\Phi) + 2\}, \label{eq:estimateClipDepth}\\
W\left(\Phi^{\mathrm{(clip)}}_i\right) &\leq 16 + 2\cdot (7 + 2 W(\Phi) + 2d). \label{eq:estimateClipWidth}
\end{align}
Finally, we set 
\begin{align*}
\widetilde{I}_N(\Phi) \coloneqq \left( \left(\bmat{ccc}{\frac{1}{N} & \dots & \frac{1}{N}}, 0 \right) \right) \sconc \mathrm{P}\left(\Phi^{\mathrm{(clip)}}_0, \dots, \Phi^{\mathrm{(clip)}}_{N-1}\right).
\end{align*}
Now we have that $t \leq t_{i}$ if $\lceil t N / T \rceil \leq i$ and $t \geq t_{i+1}$ if $i \leq \lceil tN/T \rceil - 1$. Hence, for $t \in [0,T]$ and $x \in \Omega$, 
\begin{align*}
\Realization\left(\widetilde{I}_N(\Phi)\right)(t,x) &= \frac{1}{N} \sum_{i=0}^{N-1} \Realization\left(\Phi_i^{\mathrm{(clip)}}\right)(t,x)\\
&\underset{\footnotesize{\eqref{eq:clipProp1}}}{=} \frac{1}{N} \sum_{i=0}^{\lceil tN/T\rceil-1} \Realization\left(\Phi_i^{\mathrm{(clip)}}\right)(t,x)\\
&\underset{\footnotesize{\eqref{eq:clipProp2}}}{=}  \frac{1}{N} \sum_{i=0}^{\lceil tN/T\rceil-2} \Realization\left(\Phi_i^{\mathrm{(shift)}}\right)(t,x) + \frac{1}{N} \Realization\left(\Phi_{\lceil tN/T\rceil-1}^{\mathrm{(clip)}}\right)(t,x)\\
&= \frac{1}{N} \sum_{i=0}^{\lfloor tN/T\rfloor - 2} \Realization\left(\Phi\right)(t_i,x) + \frac{1}{N} \Realization\left(\Phi_{\lceil tN/T\rceil - 1}^{\mathrm{(clip)}}\right)(t,x)\\
&= \frac{1}{N} \sum_{i=0}^{\lceil tN/T\rceil-1} \Realization\left(\Phi\right)(t_i,x) + \frac{1}{N} \left(\Realization\left(\Phi_{\lceil tN/T\rceil-1}^{\mathrm{(clip)}}\right)(t,x) - \Realization\left(\Phi\right)\left(t_{\lceil tN/T\rceil-1},x\right)\right).
\end{align*}
Since, by \eqref{eq:clipProp3}, 
$$
\frac{1}{N} \left|\Realization\left(\Phi_{\lceil tN/T\rceil-1}^{\mathrm{(clip)}}\right)(t,x) - \Realization\left(\Phi\right)\left(t_{\lceil tN/T\rceil-1},x\right)\right| \leq 3\|\Realization(\Phi)\|_{L^\infty([0,T]\times \Omega)},
$$
we conclude the proof by observing with \eqref{eq:estimateClipDepth} and \eqref{eq:estimateClipWidth} that
\begin{align*}
L\left(\widetilde{I}_N(\Phi)\right) &\leq 3 + \max\{3, L(\Phi) + 2\},\\
W\left(\widetilde{I}_N(\Phi)\right) &\leq 2N + N\cdot (32 + 4\cdot (7 + 2 W(\Phi) + 2d)) = 62N + 8 W(\Phi) N + 8d N.
\end{align*}
\end{proof}

\begin{proposition}[Left Riemann sum]\label{prop:trapez}
Let $M>0$, $n \in \N$, $f \in C^1([0,T]\times [-M,M]^n; \R)$, and $N \in \N$. The approximation of the integral of $f$ with respect to its first argument from 0 to $t\leq T$ by the left Riemann sum is given by
\begin{align*}
    I_N(f)(t,x) \coloneqq \frac{1}{N} \sum_{i=0}^{N-1} f(t_i,x) \mathds{1}_{\{t_i< t\}}, \qquad t_i=i T /N.
\end{align*}
Then 
\begin{align*}
    \sup_{t\in [0,T], x \in [-M,M]^n}\left|\int_0^t f(\tau, x)\dtau -I_N(f)(t,x)\right| \leq \frac{2}{N} \|f\|_{C^1}.
\end{align*}
\end{proposition}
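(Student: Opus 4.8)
The statement is a textbook left-endpoint quadrature error estimate, and the plan is to freeze the spatial variable and reduce to a one-dimensional computation. First I would fix $x \in [-M,M]^n$ and set $g \coloneqq f(\cdot,x) \in C^1([0,T])$; since $\|f\|_{C^1}$ bounds both $\sup|f|$ and $\sup|\partial_\tau f|$, one has $\|g\|_{L^\infty([0,T])} \le \|f\|_{C^1}$ and $\|g'\|_{L^\infty([0,T])} \le \|f\|_{C^1}$, so it is enough to estimate $\bigl|\int_0^t g(\tau)\dtau - \tfrac1N\sum_{i=0}^{N-1} g(t_i)\mathds{1}_{\{t_i<t\}}\bigr|$ uniformly in $t \in [0,T]$ and then take the supremum over $x$ at the end.

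Next I would locate the active nodes: since the $t_i$ are increasing, $\{i : t_i < t\} = \{0,\dots,p-1\}$ for some $p = p(t) \le N$, with $t_{p-1} < t \le t_p$ (the case $p=0$, i.e.\ $t=0$, being trivial). Telescoping the integral over the grid cells gives
\[
\int_0^t g(\tau)\dtau \;=\; \sum_{i=0}^{p-1}\int_{t_i}^{t_{i+1}} g(\tau)\dtau \;-\; \int_t^{t_p} g(\tau)\dtau ,
\]
so that, comparing cell by cell with the Riemann sum and isolating the overhang,
\[
\Bigl|\int_0^t g(\tau)\dtau - I_N(f)(t,x)\Bigr| \;\le\; \sum_{i=0}^{p-1}\Bigl|\int_{t_i}^{t_{i+1}} g(\tau)\dtau - \tfrac1N g(t_i)\Bigr| \;+\; \Bigl|\int_t^{t_p} g(\tau)\dtau\Bigr| .
\]

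I would then bound the per-cell terms by a first-order Taylor (mean-value) estimate: on a cell of length $1/N$ one has $|g(\tau) - g(t_i)| \le \|g'\|_{L^\infty}(\tau - t_i)$, hence $\bigl|\int_{t_i}^{t_{i+1}} g(\tau)\dtau - \tfrac1N g(t_i)\bigr| \le \tfrac{1}{2N^2}\|g'\|_{L^\infty}$, and summing the at most $N$ of them leaves at most $\tfrac{1}{2N}\|g'\|_{L^\infty}$. The overhang interval $[t,t_p]$ has length at most $1/N$ (because $t > t_{p-1}$), so $\bigl|\int_t^{t_p} g(\tau)\dtau\bigr| \le \tfrac1N\|g\|_{L^\infty}$. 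Combining the two contributions with $\max\{\|g\|_{L^\infty},\|g'\|_{L^\infty}\} \le \|f\|_{C^1}$ gives the bound $\tfrac{3}{2N}\|f\|_{C^1} \le \tfrac{2}{N}\|f\|_{C^1}$, uniformly in $t$ and $x$, which is the claim.

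There is no genuinely hard step here; this is a routine quadrature bound. The only place that calls for a little care is the bookkeeping at the last active node: one must check that the indicator $\mathds{1}_{\{t_i<t\}}$ picks out exactly the cells $[t_i,t_{i+1}]$ entering the telescoping, and that the leftover piece $[t,t_p]$ — the price for $t$ not being a grid point — is still $O(1/N)$ rather than $O(1)$. Everything else is the mean value theorem together with the triangle inequality.
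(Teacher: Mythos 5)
Your argument is correct and follows essentially the same route as the paper's proof: telescope the integral over the grid cells up to the next node $t_p$ (the paper's $t_{N(t)+1}$), bound the per-cell error via the Lipschitz bound on $f(\cdot,x)$, and control the overhang piece $\int_t^{t_p}$ by $\|f\|_{C^0}/N$; your per-cell constant $\tfrac{1}{2N}$ is even marginally sharper than the paper's $\tfrac{1}{N}$. Note only that both you and the paper implicitly treat the cell length as $1/N$ rather than $T/N$, so the stated constant really presumes $T\le 1$ (or that $T$ is absorbed into the constant), but this is a normalization issue shared with the original proof, not a gap in yours.
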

\begin{proof}
Let $N(t) \coloneqq \max\{i \in \N \,|\, t_i<t\}$. Then 
\begin{align*}
   \left|\int_0^t f(\tau, x)\dtau -I_N(f)(t,x)\right| &= \left|\sum_{i=0}^{N(t)} \int_{t_i}^{t_{i+1}} f(\tau) - f(t_i)  \dtau - \int_{{t}}^{{t_{N(t)+1}}} f(\tau) \dtau \right|\\
   &\leq \left|\sum_{i=0}^{N(t)} \int_{t_i}^{t_{i+1}} f(\tau) - f(t_i)  \dtau \right| + \left|\int_{t}^{t_{N(t)+1}} f(\tau) \dtau \right|\\
   &\leq \frac{1}{N} \mathrm{Lip_f} + \frac{1}{N} \|f\|_{C^0} \leq \frac{2}{N} \|f\|_{C^1}.
\end{align*}
\end{proof}
\remark \label{rmk:TrapRule}
Equation \ref{eq:ApproxStatementOfIntegralProposition} implies that for a NN $\Phi$ with $n+1$-dimensional input there holds
\begin{align*}
    \sup_{t\in [0,T], x \in [-M,M]^n}\left| \Realization\left(\widetilde{I}_N(\Phi)(t,x)\right) - I_N(\Realization(\Phi))(t,x)\right| \leq \frac{c}{N}
\end{align*}
with $c = 3\|\Realization(\Phi)\|_{L^\infty([0,T]\times \Omega)} > 0$.

\bibliographystyle{abbrv}
\bibliography{references}

\begin{thebibliography}{10}

\bibitem{Ambrosio2008}
L.~Ambrosio.
\newblock Transport equation and cauchy problem for non-smooth vector fields.
\newblock In {\em Calculus of Variations and Nonlinear Partial Differential
  Equations}, pages 1--41. Springer Berlin Heidelberg, 2008.

\bibitem{Ambrosio2005}
L.~Ambrosio, N.~Gigli, and G.~Savare.
\newblock {\em Gradient Flows}.
\newblock Birkh\"{a}user-Verlag, 2005.

\bibitem{barron1993universal}
A.~R. Barron.
\newblock Universal approximation bounds for superpositions of a sigmoidal
  function.
\newblock {\em IEEE Transactions on Information theory}, 39(3):930--945, 1993.

\bibitem{beck2018solving}
C.~Beck, S.~Becker, P.~Grohs, N.~Jaafari, and A.~Jentzen.
\newblock Solving stochastic differential equations and kolmogorov equations by
  means of deep learning.
\newblock {\em arXiv preprint arXiv:1806.00421}, 2018.

\bibitem{beck2019full}
C.~Beck, A.~Jentzen, and B.~Kuckuck.
\newblock Full error analysis for the training of deep neural networks.
\newblock {\em arXiv preprint arXiv:1910.00121}, 2019.

\bibitem{bellman1952theory}
R.~Bellman.
\newblock On the theory of dynamic programming.
\newblock {\em Proceedings of the National Academy of Sciences of the United
  States of America}, 38(8):716, 1952.

\bibitem{berner2018analysis}
J.~Berner, P.~Grohs, and A.~Jentzen.
\newblock Analysis of the generalization error: Empirical risk minimization
  over deep artificial neural networks overcomes the curse of dimensionality in
  the numerical approximation of black-scholes partial differential equations.
\newblock {\em arXiv preprint arXiv:1809.03062}, 2018.

\bibitem{boelcskei2019optimal}
H.~B\"{o}lcskei, P.~Grohs, G.~Kutyniok, and P.~Petersen.
\newblock Optimal approximation with sparsely connected deep neural networks.
\newblock {\em SIAM Journal on Mathematics of Data Science}, 1(1):8--45, 2019.

\bibitem{Golse2000}
F.~Bouchut, F.~Golse, and M.Pulvirenti.
\newblock {\em Kinetic equations and asymptotic theory}.
\newblock Series in Applied Mathematics, 4. Gauthier-Villars, Editions
  Scientifiques et M'edicales Elsevie, 2000.

\bibitem{chen2019efficient}
M.~Chen, H.~Jiang, W.~Liao, and T.~Zhao.
\newblock Efficient approximation of deep relu networks for functions on low
  dimensional manifolds.
\newblock In {\em Advances in Neural Information Processing Systems}, pages
  8172--8182, 2019.

\bibitem{Courant1989}
R.~Courant and D.~Hilbert.
\newblock {\em Methods of Mathematical Physics}.
\newblock Wiley, Apr. 1989.

\bibitem{cybenko1989approximation}
G.~Cybenko.
\newblock Approximation by superpositions of a sigmoidal function.
\newblock {\em Mathematics of control, signals and systems}, 2(4):303--314,
  1989.

\bibitem{dahmen2020adaptive}
W.~Dahmen, F.~Gruber, and O.~Mula.
\newblock An adaptive nested source term iteration for radiative transfer
  equations.
\newblock {\em Mathematics of Computation}, 2020.

\bibitem{dahmen2014efficient}
W.~Dahmen, C.~Huang, G.~Kutyniok, W.-Q. Lim, C.~Schwab, and G.~Welper.
\newblock Efficient resolution of anisotropic structures.
\newblock In {\em Extraction of quantifiable information from complex systems},
  pages 25--51. Springer, 2014.

\bibitem{dahmen2018adaptive}
W.~Dahmen, G.~Kutyniok, W.-Q. Lim, C.~Schwab, and G.~Welper.
\newblock Adaptive anisotropic {P}etrov--{G}alerkin methods for first order
  transport equations.
\newblock {\em Journal of Computational and Applied Mathematics}, 340:191--220,
  2018.

\bibitem{dahmen2014double}
W.~Dahmen, C.~Plesken, and G.~Welper.
\newblock Double greedy algorithms: Reduced basis methods for transport
  dominated problems.
\newblock {\em ESAIM: Mathematical Modelling and Numerical Analysis},
  48(3):623--663, 2014.

\bibitem{Lions1989}
R.~J. DiPerna and P.~L. Lions.
\newblock Ordinary differential equations, transport theory and {S}obolev
  spaces.
\newblock {\em Inventiones Mathematicae}, 98(3):511--547, Oct. 1989.

\bibitem{egger2012mixed}
H.~Egger and M.~Schlottbom.
\newblock A mixed variational framework for the radiative transfer equation.
\newblock {\em Mathematical Models and Methods in Applied Sciences},
  22(03):1150014, 2012.

\bibitem{elbrachter2018dnn}
D.~Elbr{\"a}chter, P.~Grohs, A.~Jentzen, and C.~Schwab.
\newblock {DNN} expression rate analysis of high-dimensional {PDEs}:
  Application to option pricing.
\newblock {\em arXiv preprint arXiv:1809.07669}, 2018.

\bibitem{Evans2010}
L.~Evans.
\newblock {\em Partial Differential Equations}.
\newblock American Mathematical Society, Mar. 2010.

\bibitem{fresca2020comprehensive}
S.~Fresca, L.~Dede, and A.~Manzoni.
\newblock A comprehensive deep learning-based approach to reduced order
  modeling of nonlinear time-dependent parametrized pdes.
\newblock {\em arXiv preprint arXiv:2001.04001}, 2020.

\bibitem{GolseFrench}
F.~Golse.
\newblock Distributions, analyse de {F}ourier, équations aux dérivées
  partielles.
\newblock {\em Ecole polytechnique}, Oct. 2012.

\bibitem{LectureNotesMeanField}
F.~Golse.
\newblock Lecture notes on mean field kinetic equations, Sep 2013.
\newblock
  \url{https://www.cmls.polytechnique.fr/perso/golse/M2/PolyKinetic.pdf}.

\bibitem{Goodfellow-et-al-2016}
I.~Goodfellow, Y.~Bengio, and A.~Courville.
\newblock {\em Deep Learning}.
\newblock MIT Press, 2016.
\newblock \url{http://www.deeplearningbook.org}.

\bibitem{grella2013sparse}
K.~Grella.
\newblock {\em Sparse tensor approximation for radiative transport}.
\newblock PhD thesis, ETH Zurich, 2013.

\bibitem{guhring2019error}
I.~G{\"u}hring, G.~Kutyniok, and P.~Petersen.
\newblock Error bounds for approximations with deep {ReLU} neural networks in
  {$W^{s, p}$} norms.
\newblock {\em Analysis and Applications}, in Press.

\bibitem{Hartman2002}
P.~Hartman.
\newblock {\em Ordinary Differential Equations}.
\newblock Society for Industrial and Applied Mathematics, Jan. 2002.

\bibitem{LinearFEMReLU}
J.~He, L.~Li, J.~Xu, and C.~Zheng.
\newblock {ReLU} deep {N}eural {N}etworks and {L}inear {F}inite {E}lements.
\newblock {\em arXiv preprint arXiv:1807.03973}, 2018.

\bibitem{hesthaven2016certified}
J.~S. Hesthaven, G.~Rozza, B.~Stamm, et~al.
\newblock {\em Certified reduced basis methods for parametrized partial
  differential equations}, volume 590.
\newblock Springer.

\bibitem{Horn1985}
R.~A. Horn and C.~R. Johnson, editors.
\newblock {\em Matrix Analysis}.
\newblock Cambridge University Press, USA, 1985.

\bibitem{hornik1989multilayer}
K.~Hornik, M.~Stinchcombe, H.~White, et~al.
\newblock Multilayer feedforward networks are universal approximators.
\newblock {\em Neural networks}, 2(5):359--366, 1989.

\bibitem{hutzenthaler2019proof}
M.~Hutzenthaler, A.~Jentzen, T.~Kruse, and T.~A. Nguyen.
\newblock A proof that rectified deep neural networks overcome the curse of
  dimensionality in the numerical approximation of semilinear heat equations.
\newblock {\em arXiv preprint arXiv:1901.10854}, 2019.

\bibitem{John1978}
F.~John.
\newblock {\em Partial Differential Equations}.
\newblock Springer {US}, 1978.

\bibitem{kutyniok2019theoretical}
G.~Kutyniok, P.~Petersen, M.~Raslan, and R.~Schneider.
\newblock A theoretical analysis of deep neural networks and parametric {PDEs}.
\newblock {\em arXiv preprint arXiv:1904.00377}, 2019.

\bibitem{FabianTT}
F.~Laakmann.
\newblock A theoretical analysis of high-dimensional parametric transport
  equations and neural networks.
\newblock Project thesis, University of Oxford, 2019.

\bibitem{LeCun2015DeepLearning}
Y.~LeCun, Y.~Bengio, and G.~Hinton.
\newblock {Deep learning}.
\newblock {\em Nature}, 521(7553):436--444, 2015.

\bibitem{MhaskarAnalytic1996}
H.~Mhaskar.
\newblock Neural networks for optimal approximation of smooth and analytic
  functions.
\newblock {\em Neural Comput.}, 8(1):164--177, 1996.

\bibitem{mhaskar1993approximation}
H.~N. Mhaskar.
\newblock Approximation properties of a multilayered feedforward artificial
  neural network.
\newblock {\em Adv. Comput. Math.}, 1(1):61--80, 1993.

\bibitem{montanelli2019deep}
H.~Montanelli, H.~Yang, and Q.~Du.
\newblock Deep {ReLU} networks overcome the curse of dimensionality for
  bandlimited functions.
\newblock {\em arXiv preprint arXiv:1903.00735}, 2019.

\bibitem{LectureNotesTheoryTransport}
C.~Mouhot.
\newblock Hyperbolicity: scalar transport equations, wave equations.
\newblock University of Cambridge, 2013.
  \url{https://cmouhot.files.wordpress.com/1900/10/chapter41.pdf}.

\bibitem{novak2009approximation}
E.~Novak and H.~Wo{\'z}niakowski.
\newblock Approximation of infinitely differentiable multivariate functions is
  intractable.
\newblock {\em Journal of Complexity}, 25(4):398--404, 2009.

\bibitem{obermeier2019approximation}
A.~Obermeier and P.~Grohs.
\newblock On the approximation of functions with line singularities by
  ridgelets.
\newblock {\em Journal of Approximation Theory}, 237:30--95, 2019.

\bibitem{ohlberger2015reduced}
M.~Ohlberger and S.~Rave.
\newblock Reduced basis methods: Success, limitations and future challenges.
\newblock {\em arXiv preprint arXiv:1511.02021}, 2015.

\bibitem{opschoor2019deep}
J.~A. Opschoor, P.~Petersen, and C.~Schwab.
\newblock Deep {ReLU} networks and high-order finite element methods.
\newblock {\em Analysis and Applications}.
\newblock in Press.

\bibitem{OSZ19_839}
J.~A.~A. Opschoor, C.~Schwab, and J.~Zech.
\newblock Exponential {ReLU} {DNN} expression of holomorphic maps in high
  dimension.
\newblock Technical Report 2019-35, Seminar for Applied Mathematics, ETH
  Z{\"u}rich, Switzerland, 2019.

\bibitem{petersen2018optimal}
P.~Petersen and F.~Voigtlaender.
\newblock Optimal approximation of piecewise smooth functions using deep {ReLU}
  neural networks.
\newblock {\em Neural Networks}, 108:296--330, 2018.

\bibitem{poggio2017and}
T.~Poggio, H.~Mhaskar, L.~Rosasco, B.~Miranda, and Q.~Liao.
\newblock Why and when can deep-but not shallow-networks avoid the curse of
  dimensionality: a review.
\newblock {\em Int. J. Autom. Comput.}, 14(5):503--519, 2017.

\bibitem{quarteroni2014reduced}
A.~Quarteroni, G.~Rozza, et~al.
\newblock {\em Reduced order methods for modeling and computational reduction},
  volume~9.
\newblock Springer, 2014.

\bibitem{schmidt2019deep}
J.~Schmidt-Hieber.
\newblock Deep {ReLU} network approximation of functions on a manifold.
\newblock {\em arXiv preprint arXiv:1908.00695}, 2019.

\bibitem{SchwabZechHighD2019}
C.~Schwab and J.~Zech.
\newblock Deep learning in high dimension: Neural network expression rates for
  generalized polynomial chaos expansions in {UQ}.
\newblock {\em Analysis and Applications}, 17(01):19--55, 2019.

\bibitem{Serre1999}
D.~Serre.
\newblock {\em Systems of Conservation Laws 1}.
\newblock Cambridge University Press, May 1999.

\bibitem{shaham2018provable}
U.~Shaham, A.~Cloninger, and R.~R. Coifman.
\newblock Provable approximation properties for deep neural networks.
\newblock {\em Appl. Comput. Harmon. Anal.}, 44(3):537--557, 2018.

\bibitem{sirignano2018dgm}
J.~Sirignano and K.~Spiliopoulos.
\newblock {DGM}: A deep learning algorithm for solving partial differential
  equations.
\newblock {\em Journal of Computational Physics}, 375:1339--1364, 2018.

\bibitem{suzuki2018adaptivity}
T.~Suzuki.
\newblock Adaptivity of deep {ReLU} network for learning in {B}esov and mixed
  smooth {B}esov spaces: optimal rate and curse of dimensionality.
\newblock {\em arXiv preprint arXiv:1810.08033}, 2018.

\bibitem{weinan2018deep}
E.~Weinan and B.~Yu.
\newblock The deep ritz method: a deep learning-based numerical algorithm for
  solving variational problems.
\newblock {\em Communications in Mathematics and Statistics}, 6(1):1--12, 2018.

\bibitem{yarotsky2017error}
D.~Yarotsky.
\newblock Error bounds for approximations with deep {ReLU} networks.
\newblock {\em Neural Networks}, 94:103--114, 2017.

\end{thebibliography}

\end{document}